\theoremstyle{definition}
\newtheorem{definition}{Definition}[section]
\theoremstyle{remark}
\newtheorem{remark}[definition]{Remark}
\theoremstyle{theorem}
\newtheorem{theorem}[definition]{Theorem}
\theoremstyle{corollary}
\newtheorem{corollary}[definition]{Corollary}
\theoremstyle{lemma}
\newtheorem{lemma}[definition]{Lemma}
\theoremstyle{example}
\newtheorem{example}[definition]{Example}
\theoremstyle{remarks}
\newtheorem{remarks}[definition]{Remarks}
\theoremstyle{prop}
\newtheorem{prop}[definition]{Proposition}
\author{}
\begin{document}
\title{Stability conditions on cyclic categories I: Basic definitions and examples}

\author{Yucheng Liu}

\address{Beijing International Center for Mathematical Research, Peking University, No.5 Yi-
heyuan Road Haidian District, Beijing, 100871, P.R.China}
\email{liuyucheng@bicmr.pku.edu.cn}

\keywords{Bridgeland stability conditions, Matrix factorizations, Maslov index, Chirality symmetry breaking}

\subjclass[2020]{14F08, 14B05}

\maketitle 
\begin{abstract}
A triangulated category $\mathcal{C}$ with a canonical Bott's isomorphism $[2]\xrightarrow{\sim}id$ is called a cyclic category in this paper. We give a new notion of stability conditions on a $k$-linear Krull-Schmidt cyclic category.  Given such a stability condition $\sigma$, we can assign a Maslov index to each basic loop in such a  category. If all Maslov indices vanish, we get $\widetilde{\mathcal{C}},\widetilde{\sigma}$ as the $\mathbb{Z}$-lifts of $\mathcal{C},\sigma$ respectively, such that $\widetilde{\mathcal{C}}$ is a $\mathbb{Z}$-graded triangulated category and $\widetilde{\sigma}$ is a Bridgeland stability condition on $\widetilde{\mathcal{C}}$. Moreover, we showed that there is an isomorphism $$Stab_{cyc}^{0,e}(\mathcal{C})\xrightarrow{\simeq} Stab(\widetilde{\mathcal{C}})/2\mathbb{Z},$$ where $Stab_{cyc}^{0,e}(\mathcal{C})$ denotes the equivalence classes of stability conditions which are deformation equivalent to $\sigma$, and $Stab(\widetilde{\mathcal{C}})$ denotes the space of Bridgeland stability conditions on $\widetilde{\mathcal{C}}$.

We provide some examples of stability conditions on a cyclic category. We also discuss some interesting phenomena in these examples, such as the chirality symmetry breaking phenomenon and nontrivial monodromy. The chirality symmetry breaking phenomenon involves  stability conditions which can not be lifted to Bridgeland stability conditions.
	
\end{abstract}

\section{Introduction}

The phenomenon $[2]=[0]$ has a relatively long history in mathematics, and can be found in many different branches of  mathematics and physics. For examples: the celebrated Tate cohomology of cyclic groups and Bott's periodicity; in early 1980s, Eisenbud discovered that every finitely generated $S$ module admits a free resolution which will eventually become 2-periodic, where $S$ is the algebra of functions of a hypersurface with an isolated singularity (see \cite{Homologicalalgebraoveracompleteintersection}); at almost the same time, people were developing the theory of cyclic homology (e.g. \cite{Noncommutativedifferentialgeometry},\cite{Cyclichomologyderivationsandthefreeloopspace},\cite{Excisioninbivariantperiodiccycliccohomolgy},\cite{Ontopologicalcyclichomology}), the periodic cyclic homology played a role in such a theory; Kapustin and Li used the category of matrix factorizations to describe the D-branes in Landau-Ginzburg models by following a proposal of Kontsevich (see e.g.  \cite{DbranesinLGmodelsandalgebraicgeometry},\cite{TopologicalcorrelatorsinLGmodels}), which was further studied by mathematicians (e.g. \cite{Compactgeneratorsformatrixfactorizations}, \cite{KLformularevisit}, \cite{CherncharactersandHRRformatrixfactorizations}, \cite{Residueanddualityforsingularitycategories}); we could also find the phenomenon of [2]=[0] in Treumann's paper \cite{SmiththeoryandgeometricHeckealgebras}.

On the other hand, the story of stability conditions on triangulated categories is relatively new.  Motivated by Douglas's work on D-branes and $\Pi$ stability (see e.g. \cite{douglas2002dirichlet}), Bridgeland introduced a general theory of stability conditions on triangulated categories in \cite{bridgeland2007stability}; the theory was further studied by Kontsevich and Soibelman in \cite{kontsevich2008stability}. 

Let $\mathcal{C}$ be a $k$-linear  triangulated category, if there exists a canonical isomorphism $\beta:[2]\simeq id$ between two functors, it is easy to see that there is no $t$-structures on $\mathcal{C}$. Hence no Bridgeland stability conditions exists on $\mathcal{C}$ either. However, as Bridgeland stability condition can be viewed as an $\mathbb{R}$-grading refinement of a $\mathbb{Z}$-grading ($t$-structure) on a triangulated category, we expect that there exists a notion of $S^1$-grading which refines a $\mathbb{Z}/2\mathbb{Z}$-grading of a cyclic category. 

Unlike on the real line, there are many homotopy non-equivalent paths to connects two points on $S^1$, so we need to introduce a new data to distinguish these paths. This new data is the degree function of a real decomposition on $\mathcal{C}$ (see Definition \ref{real decomposition} for the precise definition). 

Roughly speaking, given a real decomposition on $\mathcal{C}$ and any two indecomposable objects $E,F\in\mathcal{C}$, we have following decomposition  $$Hom_{\mathcal{C}}(E,F)=\bigoplus_{a\in\mathbb{R}}Hom_a(E,F),$$ which satisfy some natural conditions. A morphism $f\in Hom_a(E,F)$ is called a homogeneous morphism of degree $a$. We get a degree function $$q:\{Nontrivial \ homogeneous \ morphims \}\rightarrow \mathbb{R}.$$ This enables us to define the notion of connecting path (see Definition \ref{connecting path}) and liftable commutative diagram (see Definition \ref{liftable and locally liftable diagrams}), which are the basic notion in our definitions and results. We use these notion to define what is a stability condition on a cyclic category in Section 3.

In Section 4, we define what is a basic loop and its Maslov index. This notion of Maslov index plays a similar role as its namesake in Fukaya categories. Indeed, we have the following lifting theorem.

\begin{theorem}\label{lifting theorem}
	Given a stability condition $\sigma=(\mathcal{Q},Z,\phi, q)$ on a $k$-linear Krull-Schmidt cyclic category $\mathcal{C}$, if  the Maslov indices of  all basic loops are zero. Then there are  $\mathbb{Z}$-lifts of $\sigma$ and $\mathcal{C}$, which we denote by $\widetilde{\sigma}$ and $\widetilde{\mathcal{C}}$ respectively, such that $\widetilde{\sigma}$ is a Bridgeland stability condition on $\widetilde{\mathcal{C}}$.
\end{theorem}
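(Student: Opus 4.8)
The plan is to construct $\widetilde{\mathcal{C}}$ as an ``unrolling'' of $\mathcal{C}$, in the spirit of passing to a universal cover, and then to transport the stability data $(\mathcal{Q},Z,\phi,q)$ along it. Since $\mathcal{C}$ is Krull--Schmidt it suffices to describe the indecomposables of $\widetilde{\mathcal{C}}$: these will be pairs $(E,n)$ with $E$ indecomposable in $\mathcal{C}$ and $n\in\mathbb{Z}$ a choice of ``branch'' refining the $S^1$-valued phase $\phi(E)\in\mathbb{R}/2\mathbb{Z}$ to a real number, the remaining objects arising by finite direct sums and, if necessary, by passing to a triangulated hull. The crux is that a globally coherent choice of branches exists exactly under the hypothesis: on each connected component one fixes a base object together with a spanning tree of connecting paths (Definition~\ref{connecting path}), lifts the phase arbitrarily at the base, and propagates it along connecting paths using the degree function $q$; two propagations to the same object differ by the total Maslov index of the loop they form, so it suffices that this vanish on basic loops, provided --- as I would first verify, or quote from Section~4 --- that basic loops generate every loop of connecting paths. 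This yields a well-defined lift $\widetilde{\phi}_0(E)\in\mathbb{R}$ of $\phi(E)$ for every indecomposable $E$, and I would record that changing the base object, the tree, or the initial lift replaces these data by equivalent ones.

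Next I would define the morphisms of $\widetilde{\mathcal{C}}$ from the real decomposition $\mathrm{Hom}_{\mathcal{C}}(E,F)=\bigoplus_{a\in\mathbb{R}}\mathrm{Hom}_a(E,F)$: a homogeneous $f\colon E\to F$ of degree $a=q(f)$ is declared to be a morphism $(E,m)\to(F,n)$ precisely when the branches match the degree, i.e. $a=\bigl(\widetilde{\phi}_0(F)+n\bigr)-\bigl(\widetilde{\phi}_0(E)+m\bigr)$, and $\mathrm{Hom}_{\widetilde{\mathcal{C}}}\bigl((E,m),(F,n)\bigr)$ is the corresponding graded summand, taken to be zero if none occurs. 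Composition is inherited from $\mathcal{C}$ and is well defined because $q$ is additive under composition, so branch labels add; in particular $\mathrm{id}_E$ lifts to $\mathrm{id}_{(E,n)}$ for every $n$. The shift is $(E,n)[1]:=\bigl(E[1],n+1\bigr)$, which is consistent because $\phi(E[1])=\phi(E)+1$ in $\mathbb{R}/2\mathbb{Z}$, so $\widetilde{\phi}_0(E[1])$ and $\widetilde{\phi}_0(E)+1$ differ by an even integer that one absorbs into a fixed identification; then $[2]$ on $\widetilde{\mathcal{C}}$ becomes $(E,n)\mapsto(E,n+2)$ via Bott's isomorphism, a free $\mathbb{Z}$-action whose orbit category returns $\mathcal{C}$, so that $\widetilde{\mathcal{C}}$ is genuinely $\mathbb{Z}$-graded rather than $2$-periodic. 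Finally I would equip $\widetilde{\mathcal{C}}$ with the triangulated structure whose distinguished triangles are the lifts of those of $\mathcal{C}$: a triangle $E\to F\to G\to E[1]$ admits a unique lift once one branch is chosen, using the vanishing of Maslov indices and the liftability of the relevant commutative diagrams (Definition~\ref{liftable and locally liftable diagrams}) to exclude monodromy, and the octahedral and remaining axioms descend object-by-object from $\mathcal{C}$.

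It then remains to build $\widetilde{\sigma}$. Set $\widetilde{Z}(E,n):=(-1)^n Z(E)$, i.e. the complex number of modulus $|Z(E)|$ and argument $\pi\,\widetilde{\phi}(E,n)$ with $\widetilde{\phi}(E,n):=\widetilde{\phi}_0(E)+n$; one checks that this is additive on distinguished triangles, hence descends to a homomorphism on $K(\widetilde{\mathcal{C}})$, and is compatible with $[1]$ and with the orbit projection. Define the slicing by declaring $(E,n)$ to be $\widetilde{\sigma}$-semistable of phase $\widetilde{\phi}(E,n)$ whenever $E$ is $\sigma$-semistable of phase $\phi(E)$, and $\widetilde{\mathcal{P}}(\psi)$ to be the extension-closed subcategory these generate. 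The Harder--Narasimhan property transfers from $\sigma$: an HN filtration of $E$ in $\mathcal{C}$ has strictly decreasing phases in $\mathbb{R}/2\mathbb{Z}$, hence lifts to a filtration of $(E,n)$ with strictly decreasing phases in $\mathbb{R}$ determined by the top branch, and its uniqueness is inherited; the numerical conditions carried by the paper's notion of cyclic stability (support/local finiteness, discreteness of $Z$ on semistables, and the like) transfer verbatim. Hence $\widetilde{\sigma}=(\widetilde{\mathcal{P}},\widetilde{Z})$ is a Bridgeland stability condition on $\widetilde{\mathcal{C}}$, and by construction $\widetilde{\mathcal{C}}/\langle[2]\rangle\simeq\mathcal{C}$ with $\sigma$ recovered from $\widetilde{\sigma}$ by reducing phases modulo $2$ and restricting to the real-decomposition data, so $(\widetilde{\mathcal{C}},\widetilde{\sigma})$ is a $\mathbb{Z}$-lift of $(\mathcal{C},\sigma)$.

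The step I expect to be the main obstacle is showing that $\widetilde{\mathcal{C}}$ is a well-defined triangulated category: one must prove that killing the finitely many basic-loop Maslov indices kills all monodromy --- which requires both that basic loops generate the loop space of connecting paths and that the liftability conditions on commutative diagrams persist along distinguished triangles --- and that the unrolled category is closed under cones with coherent branch labels, so that it is honestly triangulated and not merely pretriangulated. The remaining work splits into bookkeeping (identifying $\mathcal{C}$ with $\widetilde{\mathcal{C}}/\langle[2]\rangle$ and matching all four pieces of the data), which is what makes ``$\mathbb{Z}$-lift'' precise, and the verification of Bridgeland's axioms for $\widetilde{\sigma}$, which is routine once $\widetilde{\mathcal{C}}$ is in hand.
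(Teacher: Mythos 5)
Your construction is essentially the paper's: $\widetilde{\mathcal{C}}$ has indecomposables $(E,\psi)$ with $\psi$ a real branch of the phase, morphisms are the homogeneous summands whose degree matches the difference of branches, the shift is $(E,\psi)\mapsto(E[1],\psi+1)$, triangles are the preimages of triangles under the projection $\pi$, and $\widetilde{\sigma}$ is transported through $\pi$. The one place where your route diverges, and where it acquires an unnecessary unverified step, is the construction of the branch itself: you propose to propagate a phase lift along a spanning tree of connecting paths and must then prove that vanishing of the Maslov index on \emph{basic} loops kills the monodromy of \emph{every} connecting loop --- a generation statement you flag but do not prove, and which the paper never needs. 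The reason it is not needed is that $\phi$ is already part of the data of $\sigma=(\mathcal{Q},Z,\phi,q)$, defined on all indecomposables, and condition (3) of Definition \ref{stability condition on cyclic categories} guarantees $q(f)\equiv\phi(F)-\phi(E)\ (mod\ 2\mathbb{Z})$; so the branches are simply $\phi(E)+2\mathbb{Z}$ and the morphism spaces are well defined with no loop condition at all. The vanishing of Maslov indices is used only where you also locate it, namely in TR1, to force the connecting map of a lifted triangle to land in the branch $(A[1],\psi+1)$ rather than $(A[1],\psi+1+2M)$. Two smaller points: your branch bookkeeping is internally inconsistent as written (with $\widetilde{\phi}(E,n)=\widetilde{\phi}_0(E)+n$, $(E,n)[1]=(E[1],n+1)$ and $\widetilde{\phi}_0(E[1])=\widetilde{\phi}_0(E)+1$, the shift raises the phase by $2$; indexing by the real phase itself, as the paper does, removes this), and the octahedral axiom is not deduced ``object-by-object from $\mathcal{C}$'' but is exactly what condition (2) of Definition \ref{preliftability} is imposed for --- the paper's Remark \ref{Remarks after the lifting theorem} concedes that without it one only gets a pre-triangulated category.
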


Here the  data $(Z,\phi, q)$ in a stability condition $\sigma$ is called a charge triple, and in fact, the  $\mathbb{Z}$-lift $\widetilde{\mathcal{C}}$ only depends on the charge triple. We say that $\mathcal{C}$ is liftable with respect to a charge triple $R\coloneqq (Z,\phi, q)$ if the condition in Theorem \ref{lifting theorem} is satisfied. Furthermore, one can define when two charge triples are deformation equivalent (see Definition \ref{Definition of liftable and deformation equivalence}).  And we proved the following result in Section 4. 

\begin{prop}\label{main version of connection}
	Let $\mathcal{C}$ be a $k$-linear Krull-Schmidt connective cyclic category,  $R_1=(Z_1,\phi_1,q_1)$ and $R_2=(Z_2,\phi_2, q_2)$ be two charge triples on $\mathcal{C}$. Suppose that $R_1,R_2$ are deformation equivalent and $\mathcal{C}$ is  liftable with respect to $R_1$. Then $\mathcal{C}$ is also liftable with respect to $R_2$.  
	
	Moreover,  if we denote the $\mathbb{Z}$-lifts of $\mathcal{C}$ with respect to $R_1, R_2$ by $\widetilde{\mathcal{C}}_1$ and $\widetilde{\mathcal{C}}_2$ respectively, there exists an  equivalence (not canonical)  $H:\widetilde{\mathcal{C}}_1\simeq \widetilde{\mathcal{C}}_2$ such that the following diagram is commutative. $$\begin{tikzcd}
		\widetilde{\mathcal{C}}_1 \arrow{rd}[swap]{\pi_1} \arrow{rr}{H}& & \widetilde{\mathcal{C}}_2 \arrow {ld}{\pi_2} \\ & \mathcal{C} & 
	\end{tikzcd}$$ 
\end{prop}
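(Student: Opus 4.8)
The plan is to reduce to a single continuous deformation and show that, along it, both the vanishing of the Maslov indices and the equivalence class over $\mathcal{C}$ of the $\mathbb{Z}$-lift are constant. By Definition \ref{Definition of liftable and deformation equivalence} it suffices to treat the case where $R_1$ and $R_2$ are joined by a path $\{R_t=(Z_t,\phi_t,q_t)\}_{t\in[0,1]}$ of charge triples with $R_0=R_1$ and $R_1=R_2$, a general deformation equivalence being a finite concatenation of such paths.

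For the liftability of $\mathcal{C}$ with respect to $R_2$, observe that the set of basic loops of $\mathcal{C}$ depends only on the category $\mathcal{C}$, not on the charge triple, and that for a fixed basic loop $L$ the Maslov index $\mu(L;R_t)$ is computed from the finitely many degrees $q_t(f)$ of homogeneous morphisms along $L$ and the phases of the objects on $L$, all of which vary continuously with $t$ by the definition of a deformation. Hence $t\mapsto\mu(L;R_t)$ is a continuous function $[0,1]\to\mathbb{R}$ with values in $\mathbb{Z}$, so it is constant; since $\mu(L;R_1)=0$ for every basic loop $L$, also $\mu(L;R_2)=0$ for every $L$, and Theorem \ref{lifting theorem} applies.

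For the second assertion I would prove the local statement: for every $s\in[0,1]$ there is $\varepsilon>0$ such that for $|t-s|<\varepsilon$ there is an equivalence $\widetilde{\mathcal{C}}_{R_t}\simeq\widetilde{\mathcal{C}}_{R_s}$ commuting with the projections to $\mathcal{C}$; then, $[0,1]$ being compact, composing finitely many such local equivalences gives $H\colon\widetilde{\mathcal{C}}_1\simeq\widetilde{\mathcal{C}}_2$ with $\pi_2\circ H=\pi_1$. To prove the local statement I would use the explicit description of the $\mathbb{Z}$-lift from the proof of Theorem \ref{lifting theorem}: its indecomposables are pairs $(E,n)$ with $E\in\mathcal{C}$ indecomposable and $n\in\mathbb{Z}$, normalized by fixing reference lifts along a spanning tree of the quiver of $\mathcal{C}$ and propagating through connecting paths (Definition \ref{connecting path}), which is unambiguous because all Maslov indices vanish; and the graded $\operatorname{Hom}$-spaces are assembled from the homogeneous pieces $\operatorname{Hom}_a(E,F)$ sorted into integer degree-slots. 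So long as no relevant degree $q_t(f)$ passes through an integer as $t$ runs over $(s-\varepsilon,s+\varepsilon)$, this data is literally unchanged and the identity on objects and morphisms is the required equivalence; when some $q_t(f)$ does cross an integer at $t=t_0$ I would instead construct the equivalence explicitly, matching the re-sorted homogeneous pieces on the two sides via the wall-crossing data. This last construction involves choices, which is why $H$ is not canonical and, when $R_1=R_2$ and the path is a loop, is the source of the nontrivial monodromy appearing in the paper's examples.

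I expect the main obstacle to be exactly this last point: producing, across an integer wall, an equivalence of \emph{triangulated} categories over $\mathcal{C}$ rather than merely of the underlying graded additive categories. One must verify that exact triangles are preserved, which requires comparing the liftings of commutative diagrams (Definition \ref{liftable and locally liftable diagrams}) that define the triangulated structures on the two sides and checking they are intertwined by the re-sorting map. The connective and Krull--Schmidt hypotheses are used throughout to keep the relevant combinatorics --- quivers of indecomposables, connecting paths, the degree-slot decompositions, and hence the list of walls met along the path --- finite and well-behaved, so that the local equivalences exist and can be patched.
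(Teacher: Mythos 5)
There is a genuine gap, and it originates in a misreading of Definition \ref{Definition of liftable and deformation equivalence}. In the paper, ``deformation equivalent'' is a \emph{static algebraic} relation between $R_1$ and $R_2$: for any two connecting paths $l_1,l_2\colon E\dashrightarrow F$ (paths of morphisms inside $\mathcal{C}$, not paths in a parameter space), homogeneity is preserved and $q_1(l_1)-q_1(l_2)=q_2(l_1)-q_2(l_2)$. Your proof instead assumes $R_1$ and $R_2$ are joined by a continuous family $\{R_t\}$ of charge triples and runs a continuity-plus-integrality argument for the Maslov indices followed by a compactness/patching argument for the equivalence. Nothing in the definition provides such a family, and producing one is itself nontrivial: each intermediate $R_t$ must satisfy condition (3) of Definition \ref{stability condition on cyclic categories} ($q_t(f)\equiv\phi_t(E_2)-\phi_t(E_1)\ (\mathrm{mod}\ 2\mathbb{Z})$), and naive linear interpolation of $q_1,q_2$ and $\phi_1,\phi_2$ violates this congruence unless the discrepancies already agree. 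So the reduction in your first paragraph does not follow from the hypothesis, and the rest of the argument has no foothold. The intended argument is much more direct: apply the deformation-equivalence identity with $l_2$ the trivial path $\mathrm{id}_A$ to conclude $q_1(l)=q_2(l)$ for \emph{every} connecting loop $l$; since a basic loop is a connecting loop and its Maslov index is computed from $q(f)$ for the segment $f\colon A_i\dashrightarrow A_i[1]$ (with $2q(f)$ the degree of the loop), all Maslov indices with respect to $R_2$ coincide with those for $R_1$ and hence vanish.

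The second half has a related problem. The $\mathbb{Z}$-lift in Theorem \ref{lifting theorem} does not sort homogeneous pieces into ``integer degree-slots'': its objects are pairs $(E,\phi)$ with $\phi\in\mathbb{R}$, $\phi\equiv\phi(E)\ (\mathrm{mod}\ 2\mathbb{Z})$, and $\mathrm{Hom}((E_1,\phi_1),(E_2,\phi_2))$ consists exactly of the homogeneous $f$ with $q(f)=\phi_2-\phi_1$ as real numbers. Consequently there are no ``walls'' where a degree crosses an integer, and no local-to-global patching is needed. The paper builds $H$ globally in one step: fix an indecomposable $E$, set $H(E,\phi_1(E)+2k)=(E,\phi_2(E)+2k)$, and for any other indecomposable $F$ choose a connecting path $l\colon E\dashrightarrow F$ and set $H(F,\phi_1(E)+q_1(l)+2k)=(F,\phi_2(E)+q_2(l)+2k)$; the identity $q_1(l_1)-q_1(l_2)=q_2(l_1)-q_2(l_2)$ is precisely what makes this independent of the chosen path, connectivity makes it defined everywhere, the non-canonicity is the choice of base object $E$, and commutativity with $\pi_1,\pi_2$ and exactness are immediate because distinguished triangles in the lifts are by definition those mapping to distinguished triangles in $\mathcal{C}$. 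Your instinct that preserving the triangulated structure is the delicate point is therefore also misplaced for this construction: it is automatic once $H$ commutes with the projections.
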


Given a charge triple $R$ such that $\mathcal{C}$ is liftable with respect to it, a consistent choice of these equivalences in Proposition \ref{main version of connection} forms a connection of $\mathbb{Z}$-lifts of $\mathcal{C}$ fibered over the set of charge triples which are deformation equivalent to $R$. Hence Theorem \ref{lifting theorem} and Proposition \ref{main version of connection} provide us a map $$Stab_{cyc}^0(\mathcal{C})\rightarrow Stab(\widetilde{\mathcal{C}}),$$ where $Stab_{cyc}^0(\mathcal{C})$ consists of stability conditions whose charge triples are deformation equivalent to $R$, and $\widetilde{\mathcal{C}}$ is the associated $\mathbb{Z}$-lift of $\mathcal{C}$ with respect to $R$. We use $Stab(\widetilde{\mathcal{C}})$ to denote the space of Bridgeland stability conditions on $\widetilde{\mathcal{C}}$.

And a close look at this map provides us the following comparison theorem, which is proved in Section 5.

\begin{theorem}\label{Main theorem in introduction}
There is an isomorphism $$Stab_{cyc}^0(\mathcal{C})/\simeq \xrightarrow{} Stab(\widetilde{\mathcal{C}})/2\mathbb{Z}, $$ where  the equivalence relation is defined in Definition \ref{equivalent relations} and the $2\mathbb{Z}$ action is given by $k\mapsto [k]$ for any $k\in 2\mathbb{Z}$. 
\end{theorem}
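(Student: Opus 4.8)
The plan is to glue the forward map already sketched before the statement to an explicit inverse built by pushing Bridgeland data down along $\pi\colon\widetilde{\mathcal C}\to\mathcal C$, and then to check that the two are mutually inverse homeomorphisms. As in the paragraph preceding the theorem, once a connection of $\mathbb Z$-lifts over the deformation class of $R$ is fixed, Theorem \ref{lifting theorem} and Proposition \ref{main version of connection} supply a map $L\colon Stab_{cyc}^0(\mathcal C)\to Stab(\widetilde{\mathcal C})$. I would first observe that the equivalences $H$ in Proposition \ref{main version of connection} are unique only up to post-composition with an autoequivalence of $\widetilde{\mathcal C}$ lying over $\mathrm{id}_{\mathcal C}$; since $\widetilde{\mathcal C}$ is a $\mathbb Z$-lift of the cyclic category $\mathcal C$, whose Bott isomorphism is $[2]\simeq\mathrm{id}$, that group of autoequivalences is exactly $\{[2k]:k\in\mathbb Z\}\cong 2\mathbb Z$. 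Hence $L$ becomes canonical only after projecting to $Stab(\widetilde{\mathcal C})/2\mathbb Z$, and a direct comparison with Definition \ref{equivalent relations} shows that equivalent cyclic stability conditions have the same image there; this produces $\bar L\colon Stab_{cyc}^0(\mathcal C)/{\simeq}\to Stab(\widetilde{\mathcal C})/2\mathbb Z$.

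\emph{The pushdown.} Given $\widetilde\tau=(\widetilde Z,\widetilde{\mathcal P})\in Stab(\widetilde{\mathcal C})$, I would transport its slicing along $\pi$: for indecomposables $E,F$ of $\mathcal C$ with chosen lifts $\widetilde E,\widetilde F$, the direct sum $\bigoplus_n\mathrm{Hom}_{\widetilde{\mathcal C}}(\widetilde E,\widetilde F[n])$ maps isomorphically onto $\mathrm{Hom}_{\mathcal C}(E,F)$ and thereby yields a real decomposition with degree function $q$; the Harder--Narasimhan phases of $\widetilde{\mathcal P}$ give $\phi$, the quiver $\mathcal Q$ is read off from semistable objects and the morphisms between them, and $\widetilde Z$ composed with the natural map of Grothendieck groups gives $Z$. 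One must then verify the axioms of Section 3: the liftable and locally liftable commutative diagram conditions hold because every such configuration lifts to $\widetilde{\mathcal C}$, where it is governed by the genuine slicing $\widetilde{\mathcal P}$, and all Maslov indices of basic loops vanish for the same reason. Replacing $\widetilde\tau$ by $\widetilde\tau[2k]$ merely changes the chosen lifts $\widetilde E,\widetilde F$, so the pushed-down datum is unchanged and we obtain $P\colon Stab(\widetilde{\mathcal C})/2\mathbb Z\to Stab_{cyc}^0(\mathcal C)/{\simeq}$, provided we also confirm that the charge triple of $P(\widetilde\tau)$ is deformation equivalent to $R$; for this I would join $\widetilde\tau$ to $\widetilde\sigma\coloneqq L(\sigma)$ by a path in the complex manifold $Stab(\widetilde{\mathcal C})$ and push that path down to a deformation of charge triples.

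\emph{Mutual inverseness.} Starting from $\sigma$, lifting by $L$ and then pushing down recovers $\sigma$ verbatim on hearts, central charges and degree functions, because $\pi$ followed by a chosen lift is $\mathrm{id}_{\mathcal C}$ up to the $2\mathbb Z$ already quotiented out; hence $P\circ\bar L=\mathrm{id}$. Conversely, for a class $[\widetilde\tau]$, the $\mathbb Z$-lift of $P(\widetilde\tau)$ agrees with $\widetilde\tau$ up to an even shift by the uniqueness statement in Proposition \ref{main version of connection}, so $\bar L\circ P=\mathrm{id}$. As $\bar L$ and $P$ are both continuous for the topologies induced from $Stab(\widetilde{\mathcal C})$, $\bar L$ is the asserted isomorphism.

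\emph{Main obstacle.} The crux is the pushdown step: showing that the naive pushforward of a slicing genuinely satisfies every axiom of Section 3 --- in particular the locally liftable commutative diagram condition and the compatibility of $q$ with composition --- and that the resulting charge triple lands in the prescribed deformation class rather than merely being some liftable one. Coordinating the $2\mathbb Z$-ambiguity coherently, so that the connection used to define $L$ is compatible with lifting and pushing down on both sides and $\bar L,P$ come out honestly inverse rather than inverse up to a further shift, is the other bookkeeping-heavy point.
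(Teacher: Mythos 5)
Your overall architecture is the same as the paper's: a forward map built from Theorem \ref{lifting theorem} and Proposition \ref{main version of connection} (well defined modulo the deck group $\{[2k]\}\cong 2\mathbb{Z}$ and descending to equivalence classes by Proposition \ref{equivalence proposition}), an explicit pushdown along $\pi$, and a check that the two are mutually inverse. But the step you yourself flag as the crux --- the pushdown --- is where the actual content of the paper's proof lives, and your sketch of it has a genuine gap. First, a small but real error: the lifts of an indecomposable $F$ differ only by \emph{even} shifts, so $\mathrm{Hom}_{\mathcal C}(E,F)\cong\bigoplus_{k\in\mathbb Z}\mathrm{Hom}_{\widetilde{\mathcal C}}(\widetilde E,\widetilde F[2k])$, not $\bigoplus_n\mathrm{Hom}_{\widetilde{\mathcal C}}(\widetilde E,\widetilde F[n])$. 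More seriously, this $2\mathbb{Z}$-indexed decomposition does not "yield a real decomposition with degree function $q$": a real decomposition requires an actual real number $q(f)$ for every homogeneous $f$, which by condition (3) of Definition \ref{stability condition on cyclic categories} amounts to assigning a phase $\phi(E)\in(0,2]$ to \emph{every} indecomposable --- including unstable ones and ones with $Z(E)=0$ --- and the slicing $\widetilde{\mathcal P}$ only hands you phases of semistable objects. The paper resolves this by fixing a set $S$ of representatives of unstable indecomposables modulo shift and routing every degree through the last Harder--Narasimhan factor $p_n\colon A'\to Q_n'$; it then proves the "invariance of semi-stable path," uses it to verify connectivity and liftability of the HN diagrams by induction on the number of HN factors, and shows the resulting datum is independent of the choices only up to the equivalence of Definition \ref{equivalent relations}. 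None of this is replaced by your remark that "every such configuration lifts to $\widetilde{\mathcal C}$ where it is governed by $\widetilde{\mathcal P}$"; in particular condition (7) of Definition \ref{stability condition on cyclic categories} (connectivity, liftability, and the negative constants $c_i$) must be checked against the degree function you have not yet defined.

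Two further points. Your proposed verification that the pushed-down charge triple is deformation equivalent to $R$ --- join $\widetilde\tau$ to $L(\sigma)$ by a path in $Stab(\widetilde{\mathcal C})$ and push the path down --- silently assumes $Stab(\widetilde{\mathcal C})$ is path-connected, which is neither known nor used here; the paper instead checks Definition \ref{Definition of liftable and deformation equivalence}(2) directly by lifting a pair of connecting paths $l_1,l_2\colon E\dashrightarrow F$ to $\widetilde{\mathcal C}$ and comparing degrees. Finally, the mutual-inverseness step is fine in outline, but note that $\mathbb{P}\circ\mathbb{L}=\mathrm{id}$ only holds at the level of equivalence classes precisely because the pushdown forgets the phases of unstable objects; this is the point of introducing $Stab^{0,e}_{cyc}(\mathcal C)$ in the first place, and it is worth saying explicitly rather than folding it into "recovers $\sigma$ verbatim."
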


 However, there are many stability conditions which can not be lifted to Bridgeland stability conditions.  In Section 6, we provide some examples of stability conditions on the category of $\mathbb{Z}/3\mathbb{Z}$-equivariant matrix factorizations of $w=x^{3}$. In this section, we also discuss some interesting phenomena in these examples, for instance, the chirality symmetry breaking phenomenon and the non-trivial monodromy of the map from strong stability conditions to their central charges. The chirality symmetry breaking phenomenon involves  stability conditions which can not be lifted to Bridgeland stability conditions.

\subsection{Outline of this paper} In Section 2, we briefly review some basic definitions and results of matrix factorizations. In Section 3, we firstly give some preliminary definitions, such as real decompositions, connecting paths, liftable commutative diagrams and so on. Then we give our definition of stability conditions on cyclic categories in the end of this section. In Section 4, we introduce our notion of  basic loop and its Maslov index, and prove the lifting theorem in this section. In Section 5, we prove the uniqueness of Harder-Narasimhan filtration under the assumption that all Maslov indices are non-negative. Furthermore, we prove the comparison theorem. In Section 6, we give the examples of stability conditions on the category of $\mathbb{Z}/3\mathbb{Z}$-equivariant matrix factorizations of $w=x^{3}$. We also discuss the chirality symmetry breaking phenomenon and nontrivial monodromy in these examples.

\subsection{Acknowledgement} I would like to thank Emanuel Scheidegger for teaching me the background in physics, many helpful discussions and his guidance to the literature. I  thank Alex Martsinkovsky and Amnon Neeman for answering my questions via email correspondences. I also thank Tom Bridgeland and Yu Qiu for helpful comments. I thank Qingyuan Bai, Hanfei Guo, Gang Han, Chunyi Li, Zhiyu Liu, Yongbin Ruan, Zhiyu Tian, Shizhuo Zhang and Xiaolei Zhao for  helpful discussions. Theorem \ref{Main theorem} is motivated by Zhiyu Tian's question. The final write-up of this paper was done while the author was visiting Zhejiang University, whose hospitality is gratefully acknowledged. 

\subsection{Notation and convention} A cyclic category, usually denoted by $\mathcal{C}$ in this paper, is always assumed to be $k$-linear, Krull-Schmidt and essentially small.

\section{Matrix factorizations}

\subsection{Matrix Factorizations}

Matrix factorizations was firstly introduced by Eisenbud in \cite{Homologicalalgebraoveracompleteintersection}. It got the attention from physicists because of a proposal of Kontsevich, which suggests using them to describe the D-branes on Landau-Ginzburg model (see e.g.  \cite{DbranesinLGmodelsandalgebraicgeometry} and \cite{TopologicalcorrelatorsinLGmodels}).  Mathematicians further studied them (see e.g. \cite{Compactgeneratorsformatrixfactorizations}, \cite{CherncharactersandHRRformatrixfactorizations}, \cite{Residueanddualityforsingularitycategories}, \cite{Orlovtriangulatedcatofsingularities}, and \cite{GradedMF}). All the materials in this section are  directly taken from these references.

Let us briefly recall the story of matrix factorizations. Suppose that  $(R,m)$ is a regular local ring and $M$ is a finitely generated $R$-module. The famous Auslander-Buchsbaum formula is $$pd(M)=dim(R)-depth(M).$$ In particular, if the depth of $M$ is equal to the Krull dimension of $R$, then $M$ is free.

However, if we consider a ring $S=R/w$ of hypersurface of singularity, where $w$ is singular at $m$. The Auslander-Buchsbaum formula fails, and the condition $$depth(M)=dim(S)$$ no longer implies that $M$ is free. A module satisfying such condition is called a maximal Cohen-Macaulay module.

Given a maximal Cohen-Macaulay $S$-module $M$, we can consider $M$ as an $R$-module. Then by Auslander-Buchsbaum formula we know that there is a length 1 $R$-free resolution of $M$. Suppose the following short exact sequence $$0\rightarrow F^1\xrightarrow{f} F^0\rightarrow M\rightarrow 0$$ is the $R$-free resolution of $M$. Since multiplication by $w$ on $M$ is trivial, there exists a homotopy $g$  such that the diagram $$\begin{tikzcd}
	F^1\arrow{r}{f}\arrow{d}{w}  & F^0\arrow{dl}[swap]{g}\arrow{d}{w} \\ F^1 \arrow{r}{f}  & F^0
\end{tikzcd}$$ commutes. We use the pair $(f,g)$ to define a matrix factorization of $w$. Note that the original maximal Cohen-Macaulay $M$ is isomorphic to $coker(f)$.

Therefore, it is natural to bring up the following definition, where we assume that $R$ is a commutative ring over a field $k$ and $w\in R$ is an element.
\begin{definition}\label{Definition of MF}
	A matrix factorization of the potential $w$ over $R$  is a pair 
	
	$$(E,\delta_E)=(E^0\xrightarrow{\delta_0}E^1\xrightarrow{\delta_1} E_0),$$
	
\begin{itemize}
	\item $E=E^0\oplus E^1$ is a $\mathbb{Z}/2$-graded finitely generated projective $R$-module, and 
	\item $\delta_E=\begin{pmatrix}
	0 & \delta_1 \\ \delta_0 & 0
	\end{pmatrix}\in End_R^1(E)$ is an odd (i.e. of degree $1\in\mathbb{Z}/2$) endomorphism of $E$, such that $\delta_E^2=w\cdot id _E$.
	
\end{itemize}
\end{definition}
\begin{remark}
	The map  $\delta_E$ is usually called a twisted differential in the literature. We will adopt this terminology in this paper.  Note that $\delta_E^2=w\cdot id _E$ is also equivalent to $\delta_0\delta_1=\delta_1\delta_0=w\cdot I$. 
	
	In the case when $R=k[[x_1,x_2,\cdots x_n]]$ is the ring of formal power series in $n$ variables over a field $k$, $E_0$ and $E_1$ are free $R$-modules. Moreover,  $E_0, \ E_1$ are of the same rank over $R$ by the requirement $\delta_0\delta_1=\delta_1\delta_0=w\cdot I$, hence all the maps $\delta_0$, $\delta_1$ and $\delta_E$ can be represented as matrices of elements in $R$. For all explicit examples in this paper, we will take $R$ to be a  ring of formal power series.
\end{remark}

To a potential $w\in R$ we can associated a $\mathbb{Z}/2$ dg-category $MF(w)\coloneqq MF(R,w)$ whose objects are matrix factorizations of $w$ over $R$. The morphisms from $\bar{E}=(E,\delta_E)$ to $\bar{F}=(F,\delta_F)$ are elements of the $\mathbb{Z}/2$-graded module of $R$-linear homomorphisms $$\mathcal{H}om_w(\bar{E},\bar{F})\coloneqq Hom_{Mod_R}(E,F)=Hom_{\mathbb{Z}/2-Mod_R}(E,F)\oplus Hom_{\mathbb{Z}/2-Mod_R}(E,F[1]).$$

The $\mathbb{Z}/2$-graded dg-structure is given by the following differential on $f\in \mathcal{H}om_w(\bar{E},\bar{F})$ $$df=\delta_F\circ f-(-1)^{|f|}f\circ \delta_E.$$ 

We use $$HMF(R,w)=H^0MF(R,w)$$ to denote the associated homotopy category of $MF(R,w)$, i.e, the space of morphisms in this category are chain maps up to homotopy. The homotopy category $HMF(R,w)$ is naturally triangulated (see e.g. \cite{Orlovtriangulatedcatofsingularities}) with the shift functor $$T: (E^0\xrightarrow{\delta_0}E^1\xrightarrow{\delta_1} E_0)\mapsto(E^1\xrightarrow{-\delta_1}E^0\xrightarrow{-\delta_0} E_1).$$

\begin{remark}\label{equivalence of three categories}
	Eisenbud proved that (see \cite{Homologicalalgebraoveracompleteintersection}) in the case when $(R,m)$ is a regular local ring and $S=R/w$, where $w$ is singular at the closed point $m$, the functor $coker$ induces an equivalence $$coker: HMF(R,w)\xrightarrow{\sim} \underline{MCM}(S).$$ Here $\underline{MCM}(S)$ is the stable category of maximal Cohen-Macaulay $S$-modules. The objects are maximal Cohen-Macaulay modules, the morphisms are defined by $$\underline{Hom}_S(M,M')=Hom_S(M,M')/P,$$ where $P$ denotes the set of $S$-linear homomorphisms factoring through some free $S$-module.

	In such a case, Buchweitz proved that there is another equivalence $$\underline{MCM}(S)\rightarrow D_{sing}^b(S),$$ where $D_{sing}^b(S)$ is the Verdier quotient $$D_{sing}^b(S)\coloneqq D^b(S)/D^b_{perf}(S).$$
	
	Here $D^b(S)$ is the derived category of all complexes of $S$-modules with finitely generated total cohomology. Such a complex is called perfect if it is isomorphic in $D^b(S)$ to a bounded complex of free $S$-modules. The full triangulated subcategory formed by the perfect complexes is denoted by $D^b_{perf}(S)$. See also the paper \cite{Orlovtriangulatedcatofsingularities} for the proofs of such equivalences.
	
	Moreover, in such a case, there is a non-degenerate pairing on the morphism spaces in $HMF(R,w)$. This beautiful formula was firstly introduced by Kapustin and Li using the path integral method when $k=\mathbb{C}$ (see  e.g. \cite{TopologicalcorrelatorsinLGmodels}). This formula was mathematically proved in \cite{Residueanddualityforsingularitycategories} and \cite{KLformularevisit}.
\end{remark}
	
	 For simplicity, we can assume $R=k[[x_1,\cdots, x_n]]$. For the explicit meaning of the notation in the formula, please consult \cite[Sections 1-3]{KLformularevisit}.

	\begin{theorem}\cite[Theorem 3.4]{KLformularevisit}
		The pairing $$Hom(X, Y)\otimes_R Hom(Y,X[n])\rightarrow k,$$ $$(F,G)\mapsto (-1)^{\binom{n+1}{2}}\frac{1}{n!}Res[\substack{tr(FG(dQ)^n) \\ \partial_1w, \partial_2w,\cdots, \partial_nw}]=(-1)^{\binom{n+1}{2}}\frac{1}{n!}Res[\substack{tr(GF(dP)^n) \\ \partial_1w, \partial_2w,\cdots, \partial_nw}]$$provides a homologically non-degenerate pairing on the morphism complexes of the category $MF(R,w)$ associated to the germ of an isolated hypersurface singularity. Here $P, Q$ are the twisted differentials of $X$ and $Y$ respectively.
	\end{theorem}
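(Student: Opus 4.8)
This theorem is quoted verbatim from \cite[Theorem 3.4]{KLformularevisit}, so I only sketch the structure of a proof. The plan is to exhibit the displayed formula as an explicit model for an abstract Serre--duality pairing on $HMF(R,w)$ and then to reduce homological non-degeneracy to a computation on a single generator; the preliminary step is to check that the residue functional is well defined on cohomology.

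First, I would analyze, for a fixed object $X$, the linear functional $F\mapsto \tau(F)\coloneqq \tfrac{(-1)^{\binom{n+1}{2}}}{n!}\,Res[\substack{tr(F(dP)^{n})\\ \partial_1 w,\ldots,\partial_n w}]$ on $\mathcal{H}om_w(X,X)$ in parity $n\bmod 2$, where $P=\delta_X$ and $dP$ is the entrywise de Rham differential of the twisted differential. Since $w$ has an isolated singularity, $\partial_1 w,\ldots,\partial_n w$ is a regular sequence and the Milnor algebra $R/(\partial_1 w,\ldots,\partial_n w)$ is a finite-dimensional Gorenstein $k$-algebra, so the Grothendieck residue symbol is defined. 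Using the Leibniz rule for $\delta_X$ one checks that $tr(F(dP)^{n})\in\Omega^{n}_{R/k}$ changes only by an exact form plus a form in $(\partial_1 w,\ldots,\partial_n w)\,\Omega^{n}_{R/k}$ when $F$ is modified by a coboundary $d(-)$; by the vanishing and transformation laws of the residue this shows $\tau$ descends to $Hom_{HMF(R,w)}(X,X[n])$ and is independent of the chosen presentation of $w$. Composing with the Yoneda product gives the pairing $Hom(X,Y)\otimes Hom(Y,X[n])\to Hom(X,X[n])\xrightarrow{\tau}k$, and the equality of the two displayed residue expressions (graded symmetry) comes from the supertrace identity $tr(FG)=\pm\,tr(GF)$ together with an integration-by-parts formula for residues. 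These residue manipulations are the most calculation-heavy portion of the argument.

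Next, to establish non-degeneracy I would use that, by Remark~\ref{equivalence of three categories}, $HMF(R,w)\simeq\underline{MCM}(S)\simeq D^{b}_{sing}(S)$ for $S=R/w$, and that for the germ of an isolated hypersurface singularity this category is Hom-finite, Krull--Schmidt and admits Auslander--Reiten triangles, hence has a Serre functor, which by a theorem of Auslander is a shift $[m]$ (and $[2]\simeq id$ here, so $m$ matters only modulo $2$). Both the abstract Serre pairing and the residue pairing $\tau$ are compatible with distinguished triangles --- $\tau$ because $tr$ is additive along triangles and $Res$ is $R$-linear --- so it suffices to check that $\tau$ is perfect and that $[m]\simeq[n]$ by testing on the self-morphism algebra of one generator. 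I would take $k^{stab}$, the stabilization of the residue field, presented as the Koszul-type matrix factorization attached to an expression $w=\sum_{i=1}^{n}x_i g_i$; it generates $HMF(R,w)$ as a thick subcategory, and a direct computation identifies $End_{HMF(R,w)}(k^{stab})$ with a Clifford/exterior algebra on $n$ generators on which $\tau$ becomes, up to the nonzero constant $(-1)^{\binom{n+1}{2}}/n!$, the standard top-degree trace pairing, which is non-degenerate. Matching this with the abstract pairing on the generator and propagating along triangles by a five-lemma argument yields homological non-degeneracy for all $X,Y$. I expect this reduction step to be the main obstacle: one must organize the induced pairings on the long exact $Hom$-sequences so that perfectness genuinely transports along triangles, all while keeping the normalization constants of the Koszul model and of the residue symbol consistent.
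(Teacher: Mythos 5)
The paper does not actually prove this statement: its entire ``proof'' is the observation that the formula involving $dQ$ is \cite[Theorem 3.4]{KLformularevisit} verbatim, and that the variant involving $dP$ follows by the same method. So there is no argument in the paper to compare yours against line by line; what you have written is a reconstruction of the proof in the cited reference. As such, your outline is a fair summary of that strategy: well-definedness of the residue functional on cohomology via the Leibniz rule and the transformation laws of the Grothendieck residue, identification with an abstract Serre-type duality on $HMF(R,w)\simeq\underline{MCM}(S)\simeq D^b_{sing}(S)$, and reduction of non-degeneracy to the compact generator $k^{stab}$, whose endomorphism algebra is a Clifford algebra on which the pairing is the standard top-degree trace form. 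Two remarks. First, the only content the paper itself adds --- the equality of the $dQ$ and $dP$ expressions --- is exactly the graded-symmetry step you attribute to the supertrace identity plus integration by parts for residues; that is the right mechanism and is consistent with the paper's assertion that the second formula ``can be proved by the same method.'' Second, in the propagation step, the five-lemma argument along triangles is only legitimate once you know the pairing is a genuine trace pairing, i.e.\ $\tau_X(g\circ f)=\pm\,\tau_Y(f[n]\circ g)$ for $f\colon X\to Y$, $g\colon Y\to X[n]$; this is again the displayed symmetry, so you should present it as a prerequisite for the reduction to a generator rather than as an independent aside. With that ordering made explicit, your sketch is a correct account of how the cited theorem is proved, though of course it remains a sketch and the residue manipulations you flag as ``calculation-heavy'' are precisely where the cited paper does its real work.
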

	
	\begin{proof}
		The formula involving the twisted differential $Q$ of $Y$ is proved in \cite{KLformularevisit}, and the formula involving  the twisted differential $P$ of $X$ can be proved by the same method.
	\end{proof}
	
We conclude this section by briefly recalling the $G$-equivariant version of Matrix factorizations (see e.g. \cite[Section 2.1]{CherncharactersandHRRformatrixfactorizations}). Indeed, if $G$ is a finite group of automorphisms of $R$ which fixes  the potential $w$, one defines the $G$-equivariant $\mathbb{Z}/2$-graded dg-category of matrix factorizations $MF_G(w)$ (and the corresponding homotopy category $HMF_G(w)$), by requiring that all modules and morphisms should be $G$-equivariant. In other words, the module $E$ in Definition \ref{Definition of MF} should be a $\mathbb{Z}/2$-graded finitely generated projective $R$-module equipped with a compatible $G$-action, and $\delta_E$ has to be $G$-equivariant. Morphisms between $G$-equivariant factorizations $\bar{E}$ and $\bar{F}$ should also be compatible with the action of $G$, i.e. $$Hom_{MF_G(w)}(\bar{E},\bar{F})=Hom_{MF(w)}(\bar{E},\bar{F})^G.$$

\section{Stability conditions on cyclic categories}

\subsection{Bridgeland stability conditions}

	The theory of Bridgeland stability conditions was introduced by Bridgeland in \cite{bridgeland2007stability}, motivated by Douglas's work on D-branes and $\Pi$-stability \cite{douglas2002dirichlet}. This theory was further studied by Kontsevich and Soibelman in \cite{kontsevich2008stability}.
	In this section, we will review some basic notions in the theory of stability conditions (see \cite{beilinson1982faisceaux}, \cite{bridgeland2007stability}, \cite{kontsevich2008stability} and \cite{bayer2017stability}).
	
	The first notion is $t$-structures on triangulated categories, which was firstly introduced in \cite{beilinson1982faisceaux}. 
	\begin{definition}
		Let $\mathcal{D}$ be an triangulated category. A $t$-structure on $\mathcal{D}$ is a pair of full subcategories $(\mathcal{D}^{\leq 0},\mathcal{D}^{\geq 0})$ satisfying the condition (i), (ii) and (iii) below. We denote $\mathcal{D}^{\leq n}=\mathcal{D}^{\leq 0}[-n]$, $\mathcal{D}^{\geq n}=\mathcal{D}^{\geq 0}[-n]$ for every $n\in\mathbb{Z}$. Then the conditions are:
		
		(i) $Hom(E,F)=0$ for every $E\in\mathcal{D}^{\leq 0}$ and $F\in\mathcal{D}^{\geq 1}$;
		
		(ii) $\mathcal{D}^{\leq -1}\subset \mathcal{D}^{\leq 0}$ and  $\mathcal{D}^{\geq 1}\subset \mathcal{D}^{\geq 0}$.
		
		(iii) every object $E\in\mathcal{D}$ fits into an exact triangle 
		
		$$\tau^{\leq 0}E\rightarrow E\rightarrow \tau^{\geq 1}E\rightarrow \cdots$$ with $\tau^{\leq 0}E\in\mathcal{D}^{\leq 0}$, $\tau^{\geq 1}E\in\mathcal{D}^{\geq 1}$.
		
		The heart of the $t$-structure is $\mathcal{A}=\mathcal{D}^{\leq 0}\cap\mathcal{D}^{\geq 0}$. It is an abelian category (see \cite[Theorem 8.1.9]{hotta2007d}). The associated cohomology functors are defined by $H^0(E)=\tau^{\leq 0}\tau^{\geq 0}E$, $H^i(E)=H^0(E[i])$. 
	\end{definition}
	
	Combining this definition with Harder-Narasimhan filtrations, Bridgeland defined the notion of stability conditions on a triangulated category in \cite{bridgeland2007stability}.
	
	\begin{definition}\label{slicing}
		A Bridgeland stability condition $(\mathcal{P},Z)$  on a triangulated category $\mathcal{D}$ consists of a group homomorphism $Z:K_0(\mathcal{D})\rightarrow \mathbb{C}$, which factors through  a fixed group homomorphism $K_0(\mathcal{D})\rightarrow \Lambda$, where $\Lambda$ a lattice of finite rank. This group homomorphism is called a central charge. And full subcategories $\mathcal{P}(\phi)\in\mathcal{D}$ for each $\phi\in\mathbb{R}$, satisfying the following axioms:
		
		\par
		
		(a) if $E\in\mathcal{P}(\phi)$ is a nonzero object, then $Z(E)=m(E)exp(i\pi\phi)$ for some $m(E)\in\mathbb{R}_{>0}$,
		\par
		(b) for all $\phi \in \mathbb{R}$, $\mathcal{P}(\phi+1)=\mathcal{P}(\phi)[1]$,
		
		\par
		(c) if $\phi_1>\phi_2$ and $A_j\in\mathcal{P}(\phi_j)$ then $Hom_{\mathcal{D}}(A_1,A_2)=0$,
		
		\par

		(d) for every $0\neq E\in\mathcal{D}$ there exist a finite sequence of real numbers
		
		$$\phi_1>\phi_2>\cdots>\phi_m$$and a sequence of morphisms 
		
		$$0=E_0\xrightarrow{f_1}E_1\xrightarrow{f_2} \cdots \xrightarrow{f_m}E_m=E $$such that the cone of $f_j$ is in $\mathcal{P}(\phi_j)$ for all $j$.
	\end{definition}

	\begin{remark}

		(i) If we allow $m(E)$ to be $0$ for $\phi\in\mathbb{Z}$ in (a), then the pair $(\mathcal{P},Z)$ is called a weak stability condition. In \cite{kontsevich2008stability}, the authors require the pair $(\mathcal{P},Z)$ to satisfy one extra condition (support property) to be a stability condition. We do not include this condition because it is not needed in this paper. 
		
		(ii) This notion of Bridgeland stability conditions is categorical. Indeed, suppose that  $H:\mathcal{D}_1\xrightarrow{\sim}\mathcal{D}_2$ is an exact equivalence between two triangulated categories, and $\sigma=(\mathcal{P},Z)$ is a Bridgeland stability condition on $\mathcal{D}_2$. Then we have a Bridgeland stability condition $H^*\sigma=(H^*\mathcal{P},H^*Z)$ on $\mathcal{D}_1$, which is defined in the following way: \begin{itemize}
			\item $H^*\mathcal{P}(\phi)=\{E| H(E)\in\mathcal{P}(\phi)\}$, for any $\phi\in\mathbb{R}$.
			\item $H^*Z(E)=Z(H(E))$ for any $E$ in $\mathcal{D}_1$.
		\end{itemize} It is easy to check this is a Bridgeland stability condition on $\mathcal{D}_1$.
		
	\end{remark}
	The data $\mathcal{P}$ of full subcategories $\mathcal{P}(\phi)$ is called a slicing on $\mathcal{D}$, a slicing can be viewed as a refinement of a  $t$-structure on a triangulated category. Indeed, one can easily check that a slicing on $\mathcal{D}$ gives us a lot of $t$-structures on $\mathcal{D}$: for any $\phi\in\mathbb{R}$, we have a $t$-structure $(\mathcal{P}(>\phi-1),\mathcal{P}(\leq \phi))$ on $\mathcal{D}$. 
	
	In particular, we get a heart $\mathcal{P}(0,1]=\mathcal{P}(>0)\cap \mathcal{P}(\leq 1)$. Hence, a  stability condition $(\mathcal{P},Z)$ gives us a pair $(\mathcal{A},Z)$, where $\mathcal{A}$ is an abelian category. This construction results in an equivalent definition of stability conditions (see \cite[Proposition 5.3]{bridgeland2007stability}).
	
	In this paper, we are interested in the triangulated categories with the special property $[2]\simeq [0]$. Hence we have the following definition.
 \begin{definition}
 	A triangulated category $\mathcal{C}$ is called a cyclic category if there is canonical isomorphism between two functors $$\beta: [2]\simeq id.$$ We usually call $\beta$ a Bott's isomorphism.
 \end{definition}

It is easy to see that there exist no $t$-structures on any nontrivial cyclic categories. Indeed, assume that there is a $t$-structure on $\mathcal{C}$. Then $[1]\simeq [-1]$ implies that $id_{A[1]}\in Hom(A[1], A[1])\simeq Hom(A[1], A[-1])=0$ for any object $A$ in the heart, which implies that the heart is trivial.  As a slicing is a refinement of a $t$-structure, any nontrivial cyclic categories do not admit any Bridgeland stability conditions.

However, if one intuitively think a slicing as a helix parametrization of the structure  of a triangulated category, one would expect to have the notion of $S^1$-gradings on cyclic categories, which is a circle parametrization of the structure  of a cyclic category (see Figure 1 for this intuition).

\begin{figure}[ht]\label{Figure 1}
	\flushleft
	\includegraphics[scale=1.0]{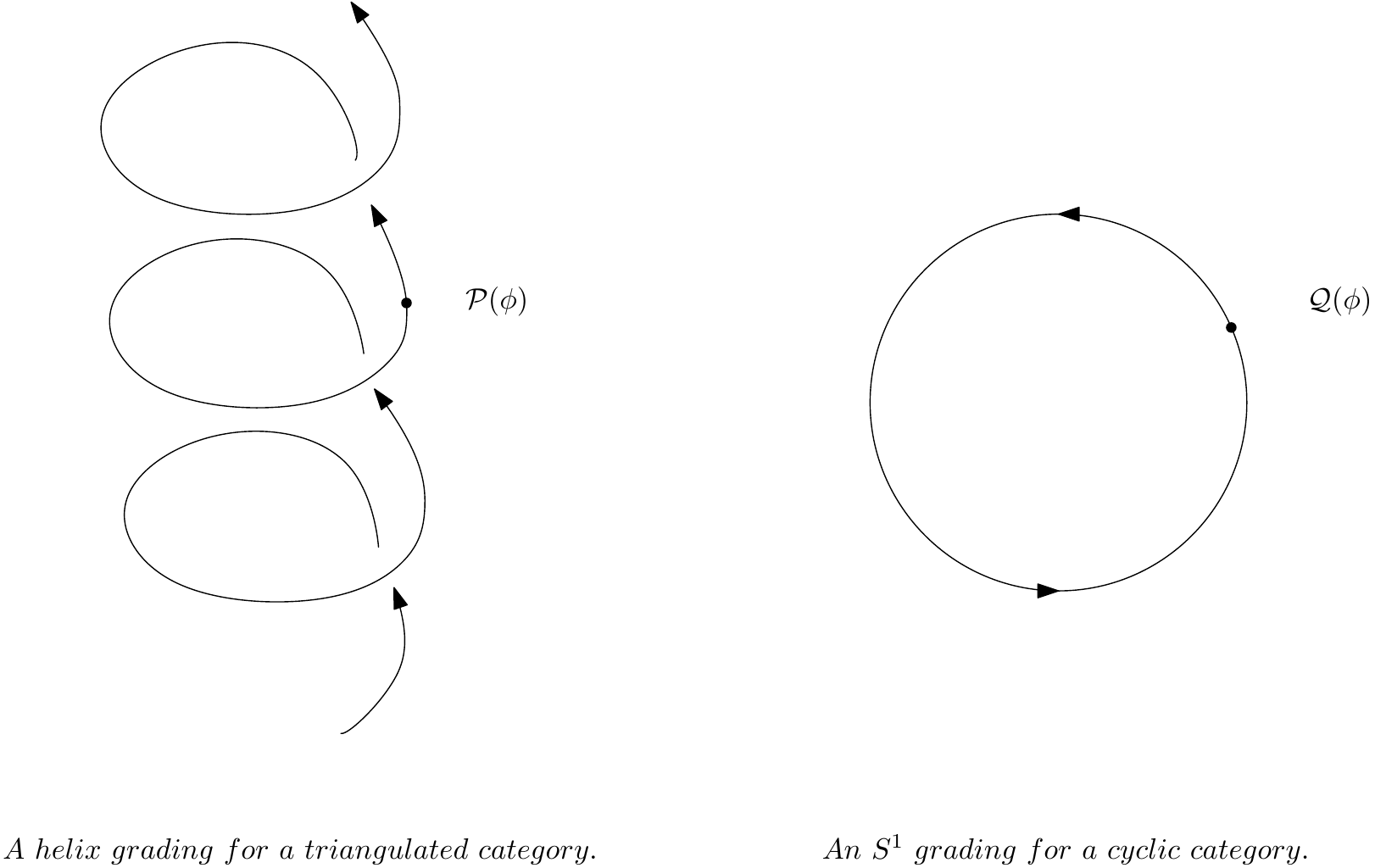}
	\caption{}
\end{figure}

\subsection{Real decompositions and liftable commutative diagrams} To make this intuition precise, we need to do some preliminary work.

From here on, we will assume that the $k$-linear cyclic category $\mathcal{C}$ is Krull-Schmidt,  i.e.  every object decomposes into direct sum of indecomposable objects in a unique way up to isomorphism, and the endomorphism ring of an indecomposable object is local. 
\begin{definition}\label{real decomposition}
	Let $\mathcal{C}$ be a $k$-linear Krull-Schmidt cyclic category. We say that $\mathcal{C}$ admits a real decomposition if every space $Hom_{\mathcal{C}}(E,F)$ admits a grading by $\mathbb{R}$, i.e.  $$Hom_{\mathcal{C}}(E,F)=\bigoplus_{a\in\mathbb{R}}Hom_a(E,F)$$ for any two indecomposable objects $E,F\in\mathcal{C}$. And these gradings should  satisfy the following conditions. \begin{enumerate} 
		\item For any morphism  $f\in Hom_a(E,F)$, we also have $ f[1]\in Hom_a(E[1],F[1])$. 
		
		\item For any two morphisms $f,g$ with $$f\in Hom_a(E,F), \ g\in Hom_b(F,G),$$  we have  $$g\circ f\in Hom_{(a+b)}(E,G).$$

		\item For any indecomposable object $E\in\mathcal{C}$, the morphisms $id_E\in Hom_0(E,E)$ and $\beta_E\in Hom_0(E[2],E)$.
		
		\item  A morphism $f\in Hom_{a}(E,F)$ is called a homogeneous morphism of degree $a$.  For any isomorphism $f$, if $f$ is homogeneous, its degree is $0$.
	\end{enumerate}

This decomposition also defines a function $$q:\{Nontrivial \ homogeneous \ morphims \}\rightarrow \mathbb{R},$$ which sends a nontrivial homogeneous morphism to its degree. The function $q$ is called the degree function associated with the real decomposition of $\mathcal{C}$.

\end{definition}

\begin{remarks} \label{Remarks after real decomposition}
	 (i) The degree function $q$ is also called $R$-charge of homogeneous morphism in physics literature (see e.g. \cite{StabilityofLGbranes}).  The real decomposition endows a graded local algebra structure on $Hom_{\mathcal{C}}(E,E)$ for any indecomposable object $E\in\mathcal{C}$, and a graded $Hom_{\mathcal{C}}(E,E)-Hom_{\mathcal{C}}(F,F)$ bi-module structure on $Hom_{\mathcal{C}}(E,F)$ for any two indecomposable objects $E,F$.
	 
	 	
	 
	
	(ii) If $\mathcal{C}$ is just a $k$-linear Krull-Schmidt category, we can define a similar notion of $G$-decomposition, where $G$ is an arbitrary  group instead of $\mathbb{R}$.  In this case, we only need the decomposition to satisfy conditions (2) and (4) in Definition \ref{real decomposition}. And all the results in this subsection holds in this generality.
 \end{remarks}
We have the following lemma from the definition of real decomposition.
\begin{lemma}
If $\mathcal{C}$ admits a real decomposition and $E,F$ are two isomorphic indecomposable objects, then there exists an isomorphism $f\in Hom_0(E,F)$. And $f$ induces an isomorphism between $\mathbb{R}$-graded vector spaces $$Hom_{\mathcal{C}}(F,G)\xrightarrow{\circ f}Hom_{\mathcal{C}}(E,G).$$
\end{lemma}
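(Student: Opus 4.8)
The plan is to first upgrade an arbitrary isomorphism $E\cong F$ to a homogeneous one of degree $0$, using the Krull--Schmidt hypothesis, and then observe that precomposition with such a map is automatically compatible with the gradings.

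First I would pick any isomorphism $g\colon E\to F$ and write its (finite) homogeneous decomposition $g=\sum_a g_a$, $g_a\in Hom_a(E,F)$, and likewise $g^{-1}=\sum_b h_b$, $h_b\in Hom_b(F,E)$. By axiom (2) of Definition \ref{real decomposition}, $h_b\circ g_a\in Hom_{a+b}(E,E)$, so comparing degree-$0$ components in $g^{-1}\circ g=id_E$ --- which lies entirely in $Hom_0(E,E)$ by axiom (3) --- gives $\sum_a h_{-a}\circ g_a=id_E$ in $End(E)$. Since $E$ is indecomposable, $End(E)$ is local, so its non-units form a proper two-sided ideal; a finite sum of non-units being a non-unit, at least one summand $h_{-a_0}\circ g_{a_0}$ is a unit of $End(E)$. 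Then $r\coloneqq (h_{-a_0}\circ g_{a_0})^{-1}\circ h_{-a_0}$ is a retraction of $g_{a_0}$, so $e\coloneqq g_{a_0}\circ r\in End(F)$ is an idempotent satisfying $e\circ g_{a_0}=g_{a_0}$; it is nonzero, since $E\neq 0$ forces $id_E\neq 0$ and hence $g_{a_0}\neq 0$. As $End(F)$ is local its only nonzero idempotent is $id_F$, whence $g_{a_0}\circ r=id_F$ and $g_{a_0}$ is an isomorphism. Being a homogeneous isomorphism it has degree $0$ by axiom (4), so $f\coloneqq g_{a_0}\in Hom_0(E,F)$ does the job.

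For the second assertion I would first note $f^{-1}\in Hom_0(F,E)$: writing $f^{-1}=\sum_b h_b$ and using $f\in Hom_0$ together with axiom (2), the identity $f\circ f^{-1}=id_F\in Hom_0(F,F)$ and the uniqueness of the homogeneous decomposition force $f\circ h_b=0$ for $b\neq 0$, hence $h_b=0$ for $b\neq 0$ since $f$ is monic. Now axiom (2) shows that $-\circ f$ carries $Hom_a(F,G)$ into $Hom_a(E,G)$ and $-\circ f^{-1}$ carries $Hom_a(E,G)$ into $Hom_a(F,G)$ for every $a$ (for $G$ indecomposable, and in general after splitting $G$ into indecomposables and using the induced grading on $Hom(-,G)$); since these two maps are mutually inverse, $-\circ f$ is the desired isomorphism of $\mathbb{R}$-graded vector spaces.

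The one point requiring care --- the ``hard part'' --- is the first step: it invokes the Krull--Schmidt hypothesis twice, via locality of $End(E)$ to extract a homogeneous component on which the ``norm'' $h_{-a_0}\circ g_{a_0}$ is invertible, and via locality of $End(F)$ (triviality of idempotents) to promote the resulting split monomorphism to an isomorphism. Everything else is routine bookkeeping with the axioms of a real decomposition.
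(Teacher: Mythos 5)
Your proof is correct and follows essentially the same route as the paper: the paper simply cites the Krull--Schmidt fact that non-invertible morphisms between indecomposables are closed under addition, so some homogeneous summand of an isomorphism must itself be an isomorphism, and then applies axioms (4) and (2); your argument just supplies the standard proof of that fact via locality of $End(E)$ and $End(F)$. No gaps.
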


\begin{proof}
	Since $\mathcal{C}$ is Krull-Schmidt. The sum of two non-invertible morphisms in $Hom(E,F)$ is still not invertible. Hence any isomorphism must have a homogeneous summand $f$ which is also an isomorphism. By condition (4) in Definition \ref{real decomposition}, $f$ is of degree $0$. Condition (2) in Definition \ref{real decomposition} implies the lemma.
\end{proof}

\begin{remark}
	Take $G$ to be $E$ in the lemma, we get that $f^{-1}$ is also a homogeneous morphism of degree $0$.
\end{remark}

Given a real decomposition of $\mathcal{C}$, we can define quasi-homogeneous morphisms and liftable quasi-homogeneous morphisms.

\begin{definition}
	Suppose a morphism $f:A\rightarrow B$ can be written in the following way  $$f:\bigoplus_{i=1}^l A_i\rightarrow \bigoplus_{j=1}^m B_j,$$ where $A_i, B_j$ are nontrivial indecomposable objects for all $i,j$. Then $f$ is called quasi-homogeneous if all its direct summand $f_{ji}:A_i\rightarrow B_j$ are homogeneous morphisms for arbitrary $i,j$.
	
\end{definition}

We also need the following terminology to define liftable quasi-homogeneous morphisms.
\begin{definition}\label{connecting path}
	
	  (1) Given a real decomposition on $\mathcal{C}$, we call the following diagram $$E_0\xleftrightarrow{f_0}E_1\xleftrightarrow{f_1}\cdots\xleftrightarrow{f_{n-1}}E_n $$ a connecting path from $E_0$ to $E_n$ if  $E_i$ is an indecomposable object for any  $0\leq i\leq n $ and $f_j$ is a nontrivial homogeneous morphism in either $Hom_{\mathcal{C}}(E_j,E_{j+1})$ or $Hom_{\mathcal{C}}(E_{j+1},E_{j})$ for any $0\leq j\leq n-1$.  We usually use $l$ to denote a connecting path, and let $$q(l)\coloneqq \Sigma_{i=0}^{n-1}Sign(f_i)q(f_i),$$ where $$Sign(f_i)=\begin{cases} 1, & \text{if} f_i\in Hom_{\mathcal{C}}(E_i,E_{i+1});\\ -1 & \text{if}
f_i\in Hom_{\mathcal{C}}(E_{i+1},E_{i}).
	\end{cases}$$
	In the special case when $E_0=E_n$, we say that $l$ is a connecting loop. A connecting path $l$ is called simple if it contains no connecting loops.
	
	(2) The cyclic category $\mathcal{C}$ is called connective if for any two indecomposable objects $E,F$, there exists a connecting path between them.
	
	(3) Let $\textbf{D}$ be a commutative digram in $\mathcal{C}$, if all the morphisms in $\textbf{D}$ are quasi-homogeneous. One could say that a connecting path $$l:E_0\xleftrightarrow{f_0}E_1\xleftrightarrow{f_1}\cdots\xleftrightarrow{f_{n-1}}E_n $$ is in $\textbf{D}$ if any homogeneous morphism $f_i$ is a homogeneous direct summand of a quasi-homogeneous morphism in that commutative diagram.

\end{definition}

\begin{remarks}
	(1) The degree of a connecting path depends on its direction. Indeed, a connecting path $l$ from $E$ to $F$ can also be viewed as a connecting path from $F$ to $E$, which we denote it by $\bar{l}$.  We have $$q(l)=-q(\bar{l}).$$
	
	(2) The property of being connective is independent of the real decomposition, it is a property of the category $\mathcal{C}$ itself.
	
(3) For	any commutative digram $\textbf{D}$ in $\mathcal{C}$, there is an associated directed graph $G(\textbf{D})$ whose vertices are objects in $\textbf{D}$, and edges are the morphisms in $\textbf{D}$. We are only interested in commutative diagrams whose associated graph is simple, i.e. with neither self loops nor multiple edges. So every commutative diagram in this paper is assumed to be of such kind.
\end{remarks}

All diagrams below are assumed to be in a $k$-linear Krull-Schmidt cyclic category $\mathcal{C}$ which admits a real decomposition. We introduce two special kinds of such commutative diagrams .

\begin{definition}\label{liftable and locally liftable diagrams}
	Let  $\textbf{D}$ be a commutative diagram  in $\mathcal{C}$. We say that $\textbf{D}$ is liftable if the following conditions are satisfied: \begin{enumerate}
		\item All the morphisms in $\textbf{D}$ are quasi-homogeneous.
		
		\item For any connecting loop $l$ in $\textbf{D}$, we have $q(l)=0$.
	\end{enumerate}

	A commutative diagram $\textbf{D}$ is called locally liftable, if for every simply connected sub-diagram $\textbf{D}'\subset \textbf{D}$, we have that $\textbf{D}'$ is liftable.

	A liftable commutative diagram $\textbf{D}$ is called connective, if for any two nontrivial indecomposable summands  $A, B$ of the objects in $\textbf{D}$, there is a connecting path $l:A\dashrightarrow B$ in $\textbf{D}$.
\end{definition}
\begin{remarks}\label{Lifting quasi-homogeneous morphism and degree matrix}
(1) A sub-diagram in $\textbf{D}$ is a commutative diagram $\textbf{D}'$ whose associated graph $G(\textbf{D}')$ is a sub-graph of $G(\textbf{D})$, and $\textbf{D}', \textbf{D}$ share the same object and morphism over the same vertex and edge respectively. The sub-diagram is called simply connected if the associated graph $G(\textbf{D}')$ is simply connected.

(2) Let $\textbf{D}$ be a liftabe commutative diagram and $f:\bigoplus_{i=1}^l A_i\rightarrow \bigoplus_{j=1}^m B_j$ be a quasi-homogeneous morphism in $\textbf{D}$. We can define a degree matrix $q(f)$ of $f$ in $\textbf{D}$. Indeed, if there exists a  connecting path $l$ from $A_i$ to $B_j$ in $\textbf{D}$, we define the $ji-th$ entry of $q(f)$ to be $q(l)$. Otherwise,  this entry is not defined. 
	
	Note that the degree matrix $q(f)$ depends on the liftable commutative diagram $\textbf{D}$. We suppress this dependence in the notation.
	
\end{remarks}

\begin{example}

Let $$f=\begin{pmatrix}
	f_{11} & \cdots & f_{1l} \\ \cdots & \cdots  &  \cdots \\ f_{m1} & \cdots & f_{ml}
\end{pmatrix}   $$ be a liftable homogeneous morphism. If  moreover $f$ is connective, i.e. for arbitrary $1\leq i\leq l, 1\leq j\leq m$, there exist a connecting path from $A_i$ to $B_j$. Every entry in the degree matrix $q(f)$ is well-defined, and  if we denote $$R(f)\coloneqq \begin{pmatrix} exp(2\pi i\cdot q(f)_{11}) & \cdots & exp(2\pi i\cdot q(f)_{1l}) \\ \cdots & \cdots & \cdots \\ exp(2\pi i\cdot q(f)_{m1})) &\cdots & exp(2\pi i \cdot q(f)_{ml})
	
	\end{pmatrix}.$$
	It is easy to show that the matrix $R(f)$ is of rank 1. We usually call $R(f)$ the $R$-matrix of $f$ when $f$ is a connective liftable quasi-homogeneous morphism. 
	
\end{example}




We have the following simple lemma.
\begin{lemma}\label{Composition lemma}
	For any chain $\textbf{D}$ of quasi-homogeneous morphisms $$A_0\xrightarrow{f_0} A_1\xrightarrow{f_1}\cdots \xrightarrow{ f_{n-1}} A_n.$$ If the chain $\textbf{D}$ is liftable, the following  commutative digram $$\begin{tikzcd}
		& A_1\arrow{r}{f_1} & A_2 \arrow{r}{f_2} &\cdots \arrow{r}{f_{n-2}} & A_{n-1} \arrow{rd}{f_{n-1}}\\ A_0 \arrow{ru}{f_0}  \arrow{rru}[swap]{f_1\circ  f_0}  \arrow{rrrru}[swap]{f_{n-2}\circ\cdots\circ f_0}\arrow{rrrrr}[swap]{f_{n-1}\circ\cdots\circ  f_0}& & & & & A_n
	\end{tikzcd}$$ is also liftable.
\end{lemma}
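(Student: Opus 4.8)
The plan is to reduce the lifting of the "composition triangle" diagram to a statement about connecting loops, and then to exploit the fact that every connecting loop in the chain $\mathbf{D}$ has degree zero by hypothesis. First I would observe that all morphisms appearing in the target diagram are quasi-homogeneous: the $f_i$ are quasi-homogeneous by assumption, and each composite $f_j\circ\cdots\circ f_0$ is a matrix whose entries are sums of composites of homogeneous summands of the $f_i$; after decomposing source and target into indecomposables, each such entry decomposes further, and by condition (2) in Definition \ref{real decomposition} a composite of homogeneous morphisms is homogeneous, so every composite is quasi-homogeneous. (One must be slightly careful: an entry of a composite could be a \emph{sum} of homogeneous morphisms of possibly different degrees; but when we pass to indecomposable summands, the commutativity of $\mathbf{D}$ together with the Krull--Schmidt structure forces the nonzero contributions to be genuinely homogeneous along each connecting path used — this is essentially the content of the degree matrix being well-defined, Remark \ref{Lifting quasi-homogeneous morphism and degree matrix}(2).)

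Next I would verify condition (2): every connecting loop $l$ in the enlarged diagram has $q(l)=0$. The key point is that every edge of the enlarged diagram lies "along" the original chain: an edge is a homogeneous summand either of some $f_i$ or of some composite $f_j\circ\cdots\circ f_i$, and in the latter case commutativity lets us replace that edge, up to degree, by a concatenation of homogeneous summands of $f_i,f_{i+1},\dots,f_j$. Concretely, if $g$ is a homogeneous summand of $f_j\circ\cdots\circ f_i$ from an indecomposable summand of $A_i$ to one of $A_j$, then (since this summand is nonzero) there is a chain of homogeneous summands $h_i,\dots,h_j$ of $f_i,\dots,f_j$ composing to a nonzero — hence, by locality of endomorphism rings, suitably compatible — morphism, and $q(g)=q(h_i)+\cdots+q(h_j)$. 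Thus any connecting loop in the enlarged diagram can be rewritten as a connecting loop built entirely out of homogeneous summands of the original $f_0,\dots,f_{n-1}$, i.e. a connecting loop \emph{in the chain} $\mathbf{D}$, without changing its degree. Since $\mathbf{D}$ is liftable, that loop has degree $0$, hence so does $l$.

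Finally, assembling these two verifications gives that the enlarged diagram satisfies both conditions of Definition \ref{liftable and locally liftable diagrams}, so it is liftable. I expect the main obstacle to be the bookkeeping in the previous paragraph: making precise the claim that a nonzero homogeneous summand of a composite $f_j\circ\cdots\circ f_i$ "factors through" a single chain of homogeneous summands of the $f_i$ with additive degree. This uses the Krull--Schmidt hypothesis (unique decomposition into indecomposables, local endomorphism rings) to argue that in the matrix-product expansion of the composite, the homogeneous component of degree $q(g)$ landing in the relevant indecomposable summand is itself a sum of honest chains of homogeneous summands, each of total degree $q(g)$ — any two such chains differ by a connecting loop in $\mathbf{D}$, which has degree $0$, so the degree is unambiguous and the rewriting of loops is legitimate. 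Everything else (quasi-homogeneity of composites, the loop-degree computation) is then a direct application of conditions (2) and (4) of Definition \ref{real decomposition}.
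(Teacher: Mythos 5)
Your proposal is correct and follows essentially the same route as the paper's proof: decompose source and target into indecomposables, note that each nonzero entry of a composite is a sum of composites of homogeneous summands which all have the same degree because $\mathbf{D}$ is liftable (any two connecting paths between the same indecomposables differ by a loop of degree zero), and then rewrite any loop in the enlarged diagram as a loop in $\mathbf{D}$. The only cosmetic difference is your appeal to locality of endomorphism rings, which is not needed; liftability of $\mathbf{D}$ alone makes the degrees unambiguous.
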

\begin{proof}
For any $1\leq i\leq n$, we can write $$A_0=\bigoplus_{j=1}^m A_{0,j},\ \  A_i=\bigoplus_{k=1}^l A_{i,k}.$$ Let $f_{kj}: A_{0,j}\rightarrow A_{i,k}$ to denote the corresponding summand in $f_{i-1}\circ\cdots\circ f_0$. The morphism $f_{kj}$ can be written as the sum of homogeneous morphisms from $A_{0,j}$ to $A_{i,k}$ in $\textbf{D}$.  As $\textbf{D}$ is liftable, these homogeneous morphisms are of the same degree. Hence any nontrivial morphism $f_{kj}$ is a homogeneous morphism of degree equal to $q(l)$, where $l:A_{0,j}\dashrightarrow A_{i,k}$ is a connecting path in $\textbf{D}$.

One can easily show that the commutative diagram is liftable  by this observation. 
\end{proof}

\begin{remark}
	In fact, the same argument can show that the associated $(n+1)$-complete commutative digram is also liftable.
\end{remark}

\begin{lemma}\label{Factorization lemma}
		Consider the following  diagram $$\begin{tikzcd}
		\bigoplus_{i=1}^lA_i \arrow{r}{f}  \arrow{d}{g}&\bigoplus_{j=1}^m B_j \\ \bigoplus_{k=1}^n C_k
	\end{tikzcd}$$ where $$f=\begin{pmatrix}
		f_{11} & \cdots & f_{1l} \\ \cdots & \cdots  &  \cdots \\ f_{m1} & \cdots & f_{ml}
	\end{pmatrix}  \ \ \ \ \ \   g=\begin{pmatrix}
		g_{11} & \cdots & g_{1l} \\ \cdots & \cdots  &  \cdots \\ g_{n1} & \cdots & g_{nl}
	\end{pmatrix}$$ are quasi-homogeneous morphisms. Suppose that this diagram is a sub-digram of a liftable commutative diagram $\textbf{D}$, and can be completed into a commutative diagram by a morphism $h$. $$\begin{tikzcd}
	\bigoplus_{i=1}^lA_i \arrow{r}{f}  \arrow{d}{g}&\bigoplus_{j=1}^m B_j \arrow[ld,dotted, "h"]\\ \bigoplus_{k=1}^n C_k &
\end{tikzcd}$$ Then there is a quasi-homogeneous morphism $\bar{h}$ such that if we add $\bar{h}$ to the digram $\textbf{D}$, we get a liftable digram $\bar{\textbf{D}}$ and the following diagram $$\begin{tikzcd}
\bigoplus_{i=1}^lA_i \arrow{r}{f}  \arrow{d}{g}&\bigoplus_{j=1}^m B_j \arrow[ld,dotted, "\bar{h}"]\\ \bigoplus_{k=1}^n C_k &
\end{tikzcd}$$ is  a liftable commutative diagram. 
\end{lemma}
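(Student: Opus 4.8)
The plan is to construct $\bar h$ explicitly, entrywise, from the given (possibly non-quasi-homogeneous) factorization $h$, and then verify that the resulting diagram is liftable. Write $h = (h_{kj})$ with $h_{kj}: B_j \to C_k$. Each $h_{kj}$ decomposes as a finite sum $h_{kj} = \sum_a h_{kj}^{(a)}$ of homogeneous morphisms $h_{kj}^{(a)}$ of degree $a$. The key observation is that the liftable diagram $\textbf{D}$ (augmented by the chain $g = h\circ f$, which already lives in $\textbf{D}$) forces a preferred degree for each pair $(B_j, C_k)$ that is actually \emph{connected} by a path in $\textbf{D}$: if $l: B_j \dashrightarrow C_k$ is any connecting path in $\textbf{D}$, then $q(l)$ is well-defined (independent of $l$) because any two such paths differ by connecting loops, all of which have degree zero by liftability of $\textbf{D}$. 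So first I would set up this "degree matrix" $q(h)$ along the lines of Remark \ref{Lifting quasi-homogeneous morphism and degree matrix}(2), noting its entries are defined precisely at those $(k,j)$ for which $B_j$ and $C_k$ are connected in $\textbf{D}$.

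Second, I would define $\bar h_{kj}$ by the rule: if $B_j$ and $C_k$ are connected in $\textbf{D}$, let $\bar h_{kj} := h_{kj}^{(q(h)_{kj})}$, the homogeneous component of $h_{kj}$ in the degree predicted by the diagram; if they are not connected, set $\bar h_{kj} := 0$. The main technical point — and I expect this to be the crux of the argument — is to show that this truncated $\bar h$ still satisfies $\bar h \circ f = g$. Here is where the liftability of $\textbf{D}$ does the real work: consider a fixed entry of $g = \bar h \circ f$, namely $g_{ki} = \sum_j \bar h_{kj} \circ f_{ji}$ for indices where $A_i, C_k$ are connected in $\textbf{D}$. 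Each genuine summand $h_{kj}\circ f_{ji}$ of $g_{ki}$ (for the original $h$) is a sum of homogeneous pieces; since $f_{ji}$ is homogeneous of degree $q(l_{ji})$ for a connecting path $l_{ji}: A_i \dashrightarrow B_j$ in $\textbf{D}$, and since $g_{ki}$ is homogeneous of degree $q(l_{ki})$ (as $g$ is quasi-homogeneous and lies in $\textbf{D}$), matching degrees forces that only the component of $h_{kj}$ of degree $q(l_{ki}) - q(l_{ji})$ can contribute to $g_{ki}$. But concatenating $l_{ji}$ with a path $C_k \dashrightarrow$ realizing the opposite shows $q(l_{ki}) - q(l_{ji})$ is exactly $q(h)_{kj}$ for a connecting path $B_j \dashrightarrow C_k$ in $\textbf{D}$ — in particular such a path exists, so $\bar h_{kj}$ is the homogeneous part we kept. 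Hence discarding the other homogeneous pieces of $h_{kj}$, and discarding entirely those $h_{kj}$ with $B_j, C_k$ disconnected (which then cannot contribute to any $g_{ki}$ with $A_i$ connected to $C_k$, for the same degree-matching reason routed through $f_{ji}$), does not change any entry of the composite. Thus $\bar h \circ f = g$.

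Finally I would check that $\bar{\textbf{D}} := \textbf{D} \cup \{\bar h\}$ is liftable, i.e. every connecting loop in it has degree zero. By construction $\bar h$ is quasi-homogeneous, so condition (1) of Definition \ref{liftable and locally liftable diagrams} holds. For condition (2): any connecting loop $l$ in $\bar{\textbf{D}}$ either avoids the new edges (the summands of $\bar h$), in which case it lies in $\textbf{D}$ and we are done, or it uses some $\bar h_{kj}: B_j \to C_k$. Each such edge can be replaced, up to the same degree, by a connecting path inside $\textbf{D}$ from $B_j$ to $C_k$ — precisely the path realizing $q(h)_{kj} = q(\bar h_{kj})$ — since $\bar h_{kj}$ is nonzero only when such a path exists and its degree agrees. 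Performing this replacement for every occurrence of a $\bar h$-edge in $l$ produces a connecting loop $l'$ entirely in $\textbf{D}$ with $q(l') = q(l)$; liftability of $\textbf{D}$ gives $q(l') = 0$, hence $q(l) = 0$. This completes the verification that $\bar{\textbf{D}}$ is liftable and that the triangle with $\bar h$ commutes, proving the lemma. The one place demanding care is the bookkeeping in the middle step — ensuring the degree-matching argument correctly identifies which homogeneous component of $h_{kj}$ survives and that the surviving component is globally consistent across all $i$ — but this is exactly governed by the well-definedness of the degree matrix, which is guaranteed by the vanishing of all loop degrees in $\textbf{D}$.
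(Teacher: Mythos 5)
Your proposal is correct and follows essentially the same route as the paper's proof: you define $\bar h_{kj}$ as the homogeneous component of $h_{kj}$ in the degree dictated by a connecting path $B_j\dashrightarrow C_k$ in $\textbf{D}$ (and zero when no such path exists), verify liftability by replacing each new edge in a loop by such a path, and establish $\bar h\circ f=g$ by the same degree-matching argument, including the observation that for fixed $i$ all $B_j$ with $f_{ji}\neq 0$ are simultaneously connected or disconnected to $C_k$ through $A_i$. No gaps; this matches the paper's argument in both construction and verification.
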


\begin{proof}
	If we write $$h=\begin{pmatrix}
		h_{11} &\cdots & h_{1m} \\ \cdots & \cdots &\cdots \\ h_{n1} &\cdots & h_{nm}
	\end{pmatrix}$$ we claim that if we take $$\bar{h}=\begin{pmatrix}
	\bar{h}_{11} &\cdots & \bar{h}_{1m} \\ \cdots & \cdots &\cdots \\ \bar{h}_{n1} &\cdots & \bar{h}_{nm}
\end{pmatrix}$$ where $$ \bar{h}_{kj}=\begin{cases} \text{Summand of}\ h_{kj}\  \text{in degree} \ q(l), & \text{if there is a connecting path}\  l:B_j\dashrightarrow  C_k\in\textbf{D};\\ 0, & \text{otherwise}. \end{cases} $$ Then $\bar{h}$ satisfy the statement in the lemma.  

First of all, the new diagram  is  lifatable by construction. Indeed, for any connecting loop $l$ in $\bar{\textbf{D}}$, we can replace the segments $\bar{h}_{kj}$ in $l$ by a connecting path $l_{kj}:B_j\dashrightarrow  C_k\in\textbf{D}$. This does not change the degree of the loop by the definition of $\bar{h}_{kj}$. Hence we get a new loop $l'\in\textbf{D}$ with $q(l)=q(l')=0$.

The next thing is to check that $g=\bar{h}\circ f$. It suffices to prove that \begin{equation*}
	\begin{pmatrix}
	\bar{h}_{k1} & \cdots &\bar{h}_{km}
\end{pmatrix}\begin{pmatrix}
	f_{11} & \cdots & f_{1l} \\ \cdots & \cdots  &  \cdots \\ f_{m1} & \cdots & f_{ml}
\end{pmatrix}=\begin{pmatrix}
g_{k1} & \cdots & g_{kl}
\end{pmatrix} \end{equation*} for any $1\leq k\leq n$.  

For any given $1\leq i_1\leq l$, we let $$J\coloneqq \{j_1,j_2,\cdots ,j_s\}=\{1\leq j\leq m| f_{ji_1}\neq 0\}.$$ The case $J=\emptyset$ is trivial. We assume that $J\neq \emptyset$. If there is no connecting paths from $B_{j_r}$ to $C_k$ in $\textbf{D}$ for any $1\leq r\leq s$, the morphism $g_{ki_1}$ must be $0$. Hence $$\Sigma_{j=1}^m\bar{h}_{kj}f_{ji_1}=0=g_{ki_1}.$$

On the other hand, if there is an integer $1\leq r_1\leq s$ such that there is a connecting path  from $B_{j_{r_1}}$ to $C_k$ in $\textbf{D}$. Then there is a connecting path $$B_{j_r}\xleftarrow{f_{j_ri_1}} A_{i_1}\xrightarrow{f_{j_{r_1}i_1}} B_{j_{r_1}}\dashrightarrow C_k$$ for any $1\leq r\leq s$. Hence we have that $\bar{h}_{kj_r}f_{j_ri_1}$ are homogeneous morphisms of same degree for any $1\leq r\leq s$, and this degree equals to $q(g_{ki_1})$ if $g_{ki_1}\neq 0$. This implies that $\Sigma_{j=1}^m\bar{h}_{kj}f_{ji_1}=g_{ki_1}$ if $g_{ki_1}\neq 0$. If $g_{ki_1}=0$, we know that $\Sigma_{j=1}^m\bar{h}_{kj}f_{ji_1}$ is a homogeneous summand of $\Sigma_{j=1}^m h_{kj}f_{ji_1}=g_{ki_1}=0$, which is also $0$. This completes the proof.

\end{proof}

\begin{remark}
 One thing worth noticing is that, as we can see in the proof,  the morphism $\bar{h}$ may depend on the ambient commutative liftable diagram $\textbf{D}$. 
\end{remark}

\begin{prop}\label{Factorization proposition}
	Suppose the following diagram $$\begin{tikzcd}
		& A_1\arrow{r}{f_1} &\cdots \arrow{r}{f_{i-1}} & A_i & A_{i+1} \arrow{r}{f_{i+1}} & \cdots\arrow{r}{f_{n-2}} & A_{n-1}\arrow{rd}{f_{n-1}} & \\A_0\arrow{ru}{f_0} \arrow{rrrrrrr}{g}&&&&&&& A_n
	\end{tikzcd}$$ is a sub-diagram of a liftable commutative diagram $\textbf{D}$, and it can be completed into a commutative diagram by a morphism $h$. $$\begin{tikzcd}
	& A_1\arrow{r}{f_1} &\cdots \arrow{r}{f_{i-1}} & A_i \arrow[r,dotted,"h"]& A_{i+1} \arrow{r}{f_{i+1}} & \cdots\arrow{r}{f_{n-2}} & A_{n-1}\arrow{rd}{f_{n-1}} & \\A_0\arrow{ru}{f_0} \arrow{rrrrrrr}{g}&&&&&&& A_n
\end{tikzcd}$$

Then there is a quasi-homogeneous morphism $\bar{h}$ such that the following diagram $$\begin{tikzcd}
		& A_1\arrow{r}{f_1} &\cdots \arrow{r}{f_{i-1}} & A_i \arrow[r,dotted,"\bar{h}"]& A_{i+1} \arrow{r}{f_{i+1}} & \cdots\arrow{r}{f_{n-2}} & A_{n-1}\arrow{rd}{f_{n-1}} & \\A_0\arrow{ru}{f_0} \arrow{rrrrrrr}{g}&&&&&&& A_n
\end{tikzcd}$$ is a liftable commutative diagram.
\end{prop}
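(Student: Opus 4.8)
The plan is to mimic the homogenization construction used in the proof of Lemma~\ref{Factorization lemma}, but applied to \emph{both} legs of the zig-zag at once, after collapsing each leg to a single morphism by means of Lemma~\ref{Composition lemma}. Write $F\coloneqq f_{i-1}\circ\cdots\circ f_0\colon A_0\to A_i$ and $G\coloneqq f_{n-1}\circ\cdots\circ f_{i+1}\colon A_{i+1}\to A_n$ for the composites of the two legs. By Lemma~\ref{Composition lemma} these are quasi-homogeneous and, if $A_0=\bigoplus_r A_{0,r}$, $A_i=\bigoplus_p A_{i,p}$, $A_{i+1}=\bigoplus_q A_{i+1,q}$, $A_n=\bigoplus_s A_{n,s}$ are the decompositions into indecomposables, every nonzero entry $F_{pr}$ (resp. $G_{sq}$) of the matrix of $F$ (resp. $G$) is homogeneous, and there is a connecting path $A_{0,r}\dashrightarrow A_{i,p}$ (resp. $A_{i+1,q}\dashrightarrow A_{n,s}$) in $\textbf{D}$ of the same degree. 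Commutativity of the completed zig-zag is precisely the identity $g=G\circ h\circ F$.

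I would then define $\bar h=(\bar h_{qp})$ by letting $\bar h_{qp}$ be the homogeneous summand of $h_{qp}$ of degree $q(l)$ whenever there exists a connecting path $l\colon A_{i,p}\dashrightarrow A_{i+1,q}$ in $\textbf{D}$, and $\bar h_{qp}=0$ otherwise. This is well defined because $\textbf{D}$ is liftable, so any two such paths differ by a connecting loop and thus share the same degree; moreover $\bar h$ is quasi-homogeneous by construction. It then remains to check two things: that $\textbf{D}\cup\{\bar h\}$ is still liftable, and that $g=G\circ\bar h\circ F$. The zig-zag completed by $\bar h$ is a sub-diagram of $\textbf{D}\cup\{\bar h\}$, so once these are established it is automatically a liftable commutative diagram. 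Liftability of $\textbf{D}\cup\{\bar h\}$ is verbatim the argument in Lemma~\ref{Factorization lemma}: in any connecting loop replace each occurrence of an edge $\bar h_{qp}$ by a connecting path $l\colon A_{i,p}\dashrightarrow A_{i+1,q}$ in $\textbf{D}$, which leaves the total degree unchanged and produces a degree-$0$ loop in $\textbf{D}$.

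For the factorization identity I would introduce the ``$\textbf{D}$-homogenization'' $\phi\mapsto\overline\phi$ on morphisms between sums of objects of $\textbf{D}$ (take entrywise the summand whose degree is that of a connecting path in $\textbf{D}$ between the relevant indecomposables, $0$ if there is none), well defined again by liftability of $\textbf{D}$. Then $\bar h=\overline h$ by definition, and $\overline g=g$, the latter because $g$ lies in $\textbf{D}$ and each nonzero entry $g_{sr}$ is itself a one-edge connecting path of its own degree; hence it is enough to prove $\overline{GhF}=G\,\overline h\,F$ entrywise. Fix indices $r,s$. If there is no connecting path $A_{0,r}\dashrightarrow A_{n,s}$ in $\textbf{D}$, then $\overline{GhF}_{sr}=0$ by definition, and $(G\overline hF)_{sr}=0$ as well, since any nonzero term $G_{sq}\bar h_{qp}F_{pr}$ would have $F_{pr},G_{sq}\neq 0$ together with a connecting path $A_{i,p}\dashrightarrow A_{i+1,q}$ in $\textbf{D}$, whose concatenation with the paths attached to $F_{pr}$ and $G_{sq}$ would produce the missing path $A_{0,r}\dashrightarrow A_{n,s}$. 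If instead there is such a path $l_{sr}$, of degree $D$, then for every pair $(q,p)$ with $F_{pr}\neq 0$ and $G_{sq}\neq 0$ the concatenation ``reverse of the $F_{pr}$-path, then $l_{sr}$, then reverse of the $G_{sq}$-path'' is a connecting path $A_{i,p}\dashrightarrow A_{i+1,q}$ in $\textbf{D}$ of degree $D-q(F_{pr})-q(G_{sq})$, so $\bar h_{qp}$ is the summand of $h_{qp}$ in exactly that degree; therefore $G_{sq}\bar h_{qp}F_{pr}$ is homogeneous of degree $D$, and summing over $(q,p)$ one gets precisely the degree-$D$ component of $(GhF)_{sr}$, i.e. $\overline{GhF}_{sr}$. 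Thus $g=\overline g=\overline{GhF}=G\bar hF$, as required.

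The step I expect to demand the most care is this last one: the matching of graded pieces works only because the existence of a single ``long'' connecting path $A_{0,r}\dashrightarrow A_{n,s}$ forces compatible ``short'' connecting paths $A_{i,p}\dashrightarrow A_{i+1,q}$ for all the relevant indices, with liftability of $\textbf{D}$ pinning down their degrees, so that entries of $h$ with no connecting-path ``home'' in $\textbf{D}$ cannot interfere. This is the same mechanism as in Lemma~\ref{Factorization lemma}, but it has to be run simultaneously from both ends of the zig-zag, and the collapsing step via Lemma~\ref{Composition lemma} is exactly what licenses treating the two legs as single edges $F$ and $G$.
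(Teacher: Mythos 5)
Your proposal is correct and follows essentially the same route as the paper: the same $\textbf{D}$-homogenization defining $\bar h$, Lemma \ref{Composition lemma} to collapse the two legs into single quasi-homogeneous composites, and the same connecting-path degree-matching (including the loop-replacement argument for liftability) to verify $g=G\circ\bar h\circ F$. The only difference is organizational: the paper first invokes Lemma \ref{Factorization lemma} to obtain $g=\overline{f\circ h}\circ F$ and then separately checks $\overline{f\circ h}=f\circ\bar h$, whereas you prove the combined identity $\overline{GhF}=G\bar hF$ in one entrywise computation.
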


\begin{proof}
	The proof is similar to the proof of Lemma \ref{Factorization lemma}, we sketch the proof for readers' convenience.
	
	We construct $\bar{h}$ and $\overline{f_{n-1}\circ\cdots\circ f_{i+1}\circ h}$ as in the proof of Lemma \ref{Factorization lemma}. By Lemma \ref{Composition lemma} and Lemma \ref{Factorization lemma}, we know that the following diagram $$\begin{tikzcd}
		& A_1\arrow{r}{f_1} &\cdots \arrow{r}{f_{i-1}} & A_i \arrow{rrrrd}[swap]{\overline{f_{n-1}\circ\cdots\circ f_{i+1}\circ h}}& A_{i+1} \arrow{r}{f_{i+1}} & \cdots\arrow{r}{f_{n-2}} & A_{n-1}\arrow{rd}{f_{n-1}} & \\A_0\arrow{ru}{f_0} \arrow{rrrrrrr}{g}&&&&&&& A_n
	\end{tikzcd}$$ is liftable and commutative. It suffices to prove that $\overline{f_{n-1}\circ\cdots\circ f_{i+1}\circ h}=f_{n-1}\circ\cdots\circ f_{i+1}\circ\bar{h}$. We let $f\coloneqq f_{n-1}\circ\cdots\circ f_{i+1}$.

As in the proof of Lemma \ref{Factorization lemma}, we can assume that $A_{i},A_n$ are indecomposable objects. Hence we can write $$A_{i+1}=\bigoplus_{j=1}^m D_{j},\ \ h=\begin{pmatrix}
	h_1\\ \cdots\\ h_m
\end{pmatrix}, \ \  f=\begin{pmatrix}
f_1 & \cdots, f_m
\end{pmatrix}.$$ Let $$J\coloneqq \{j_1,j_2,\cdots ,j_s\}=\{1\leq j\leq m| f_{j}\neq 0\}.$$ This implies that there is connecting path form $D_{j_r}$ to $A_n$ in $\textbf{D}$ for any $1\leq r\leq s$. Thus the argument in proof of Lemma \ref{Factorization lemma} shows that $\Sigma_{j=1}^m f_j\bar{h_j}$ is the homogeneous summand of $\Sigma_{j=1}^m f_jh_j$ in the right degree. Hence $f\circ \bar{h}=\overline{f\circ h}$.
\end{proof}
\begin{corollary}\label{Fill-in corollary}
		Let $\textbf{D}$ be any liftable commutative diagram, suppose that we can add a morphism $h$ into $\textbf{D}$ such that the new diagram $\textbf{D}'$ is still commutative. Then there exists a quasi-homogeneous morphism $\bar{h}$ such that, if we add $\bar{h}$ into $\textbf{D}$ in the same position, the new diagram $\bar{\textbf{D}}$ is a lifatable commutative diagram.
\end{corollary}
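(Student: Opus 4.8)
The plan is to reduce the general case to the two building blocks already established: Lemma \ref{Factorization lemma} (the ``triangle'' case, where $h$ closes up a span $g,f$) and Proposition \ref{Factorization proposition} (the ``chain'' case, where $h$ sits inside a path whose composite is $g$). The key observation is that adding a single edge $h$ to a commutative diagram $\textbf{D}$ and keeping commutativity imposes exactly one new type of relation: every loop through $h$ in $G(\textbf{D}')$ must commute, and since $\textbf{D}$ was already commutative, it suffices to control the commutativity of the elementary cycles through the new edge $h$. Each such elementary cycle, after orienting it, is either a triangle of the shape handled by Lemma \ref{Factorization lemma} or a longer polygon; in the polygon case one side is $h$ and the rest is a chain of morphisms of $\textbf{D}$ whose composite (which exists in $\textbf{D}$ because $\textbf{D}$ is commutative, or can be added by the chain version) plays the role of $g$, so Proposition \ref{Factorization proposition} applies.

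Concretely, I would proceed as follows. Say $h\colon X\to Y$ is the morphism we are adjoining; write $X=\bigoplus_i A_i$, $Y=\bigoplus_j B_j$ into indecomposables. For each pair $(i,j)$ such that there exists a connecting path $l\colon A_i\dashrightarrow B_j$ in $\textbf{D}$, set $\bar h_{ji}$ to be the homogeneous summand of $h_{ji}$ of degree $q(l)$ (well-defined since $\textbf{D}$ is liftable, so all connecting paths $A_i\dashrightarrow B_j$ in $\textbf{D}$ have the same degree); otherwise set $\bar h_{ji}=0$. This is the natural common generalization of the formulas for $\bar h$ appearing in the proofs of Lemma \ref{Factorization lemma} and Proposition \ref{Factorization proposition}. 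Then I claim $\bar{\textbf{D}}\coloneqq\textbf{D}\cup\{\bar h\}$ is liftable and commutative.

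Liftability of $\bar{\textbf{D}}$ is argued exactly as in Lemma \ref{Factorization lemma}: every connecting loop in $\bar{\textbf{D}}$ that uses an edge $\bar h_{ji}$ can have that segment replaced by a connecting path $A_i\dashrightarrow B_j$ in $\textbf{D}$ of the same degree $q(l)$, producing a connecting loop $l'$ entirely in $\textbf{D}$ with $q(l')=0$; loops avoiding $\bar h$ are already in $\textbf{D}$. For commutativity, it is enough to check that for any two vertices $P,Q$ of $\bar{\textbf{D}}$ and any two directed paths $P\to Q$ in $G(\bar{\textbf{D}})$ differing only through the edge $\bar h$, the composites agree; since $\textbf{D}$ was commutative, one side may be taken to factor through $\textbf{D}$ alone, and the equality reduces to showing that for each relevant composite, replacing $h$ by $\bar h$ picks out precisely the homogeneous summand forced by degree considerations, while the omitted summands of $h$ contribute to composites that are themselves zero in $\textbf{D}$. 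This is exactly the computation carried out entrywise in the last paragraphs of the proofs of Lemma \ref{Factorization lemma} and Proposition \ref{Factorization proposition}: given a summand $A_{i}\to P$ of the path before $h$ and a summand $Q\to B_{j}$ after, one concatenates with a connecting path $B_j\dashrightarrow Q$ (resp. $P\dashrightarrow A_i$) in $\textbf{D}$ to see that the surviving terms all have the forced degree, and the rest vanish because the corresponding composite in $\textbf{D}$ vanishes. The main obstacle is bookkeeping rather than anything conceptual: one must organize the (possibly many) elementary cycles through $h$ so that applying the single-edge replacement $h\mapsto\bar h$ simultaneously repairs all of them, and verify that the definition of $\bar h$ does not depend on which cycle one uses to test it --- this independence is precisely what liftability of $\textbf{D}$ guarantees, since it forces the degree $q(l)$ to depend only on the endpoints $A_i,B_j$ and not on the chosen connecting path.
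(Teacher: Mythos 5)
Your proposal is correct and follows essentially the same route as the paper: the paper's proof likewise defines $\bar{h}$ entrywise as the homogeneous summand of $h$ in the degree forced by connecting paths in $\textbf{D}$ (exactly the construction from Lemma \ref{Factorization lemma}), notes liftability by the same loop-replacement argument, and derives commutativity from Lemma \ref{Composition lemma} and Proposition \ref{Factorization proposition}. Your write-up simply spells out in more detail the cycle-by-cycle reduction that the paper leaves implicit.
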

\begin{proof}
We can construct $\bar{h}$ as in the proof of Lemma \ref{Factorization lemma}, the new diagram is obviously liftable. And the commutativity follows from Lemma \ref{Composition lemma} and Proposition \ref{Factorization proposition}.
\end{proof}

Sometimes, we need to glue two liftable commutaitve diagrams along a common sub-diagram. In general the glued diagram will not be liftable. However, we have following easy lemma.

\begin{lemma}
	Let $\textbf{D}_1$ and $\textbf{D}_2$ be two liftable diagrams, $\textbf{D}_3$ be a common sub-diagram of $\textbf{D}_1$ and $\textbf{D}_2$, and $\textbf{D}$ be the glued diagram of  $\textbf{D}_1$ and $\textbf{D}_2$ along $\textbf{D}_3$. If $\textbf{D}_3$ is connective, then $\textbf{D}$ is liftable.
\end{lemma}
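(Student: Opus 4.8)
The plan is to verify the two defining conditions for $\textbf{D}$ to be liftable (Definition \ref{liftable and locally liftable diagrams}): that all morphisms are quasi-homogeneous, and that every connecting loop in $\textbf{D}$ has degree zero. The first condition is immediate, since every morphism of $\textbf{D}$ is a morphism of either $\textbf{D}_1$ or $\textbf{D}_2$, both of which are liftable, hence quasi-homogeneous. So the entire content is in showing $q(l) = 0$ for every connecting loop $l$ in $\textbf{D}$.

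First I would fix a connecting loop $l$ in $\textbf{D}$ and break it into maximal segments lying entirely in $\textbf{D}_1$ or entirely in $\textbf{D}_2$; say $l = l_1 * l_2 * \cdots * l_r$ where each $l_t$ is a connecting path in $\textbf{D}_{\epsilon(t)}$ ($\epsilon(t)\in\{1,2\}$) and consecutive segments alternate, with the endpoint of $l_t$ equal to the starting point of $l_{t+1}$ (indices cyclic since $l$ is a loop). Each such junction point $P_t$ (the shared endpoint of $l_t$ and $l_{t+1}$) is a vertex lying in both $\textbf{D}_1$ and $\textbf{D}_2$, hence in $\textbf{D}_3$; more precisely every indecomposable summand appearing as an intermediate object where we switch diagrams is a summand of an object of $\textbf{D}_3$. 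Now the key move: since $\textbf{D}_3$ is connective, for each consecutive pair of junction points $P_t, P_{t+1}$ — both being (summands of) objects in $\textbf{D}_3$ — there is a connecting path $m_t : P_t \dashrightarrow P_{t+1}$ lying entirely in $\textbf{D}_3$.

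Then I would rewrite $q(l) = \sum_t q(l_t)$ and insert the auxiliary paths: for each $t$, the concatenation $l_t * \bar{m_t}$ is a connecting loop inside $\textbf{D}_{\epsilon(t)}$ (it starts at $P_{t-1}$... — careful with indexing — it is a closed loop in a single $\textbf{D}_{\epsilon(t)}$ because both $l_t$ and $m_t$ connect the same pair of $\textbf{D}_3$-points, and $m_t \subset \textbf{D}_3 \subset \textbf{D}_{\epsilon(t)}$), so by liftability of $\textbf{D}_{\epsilon(t)}$ we get $q(l_t) + q(\bar m_t) = q(l_t) - q(m_t) = 0$, i.e. $q(l_t) = q(m_t)$. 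Summing, $q(l) = \sum_t q(m_t) = q(m_1 * m_2 * \cdots * m_r)$, and the latter is a connecting loop entirely contained in $\textbf{D}_3$. Since $\textbf{D}_3$ is itself liftable, this loop has degree $0$, hence $q(l) = 0$.

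The step I expect to be the main obstacle is the bookkeeping around the junction points: making precise that when the loop $l$ passes from $\textbf{D}_1$ to $\textbf{D}_2$ it does so through an indecomposable object that genuinely lies in $\textbf{D}_3$ (not merely in the set-theoretic intersection of objects, but as a summand of a $\textbf{D}_3$-object so that connectivity of $\textbf{D}_3$ can be invoked), and handling the telescoping of degrees around the cyclic loop correctly — in particular checking that the auxiliary loops $l_t * \bar m_t$ really are loops within a single $\textbf{D}_{\epsilon(t)}$ and that orientations/signs in the definition of $q$ are tracked consistently. Once the decomposition into single-diagram segments and the insertion of $\textbf{D}_3$-paths is set up cleanly, the degree cancellation is a one-line telescoping argument using liftability of $\textbf{D}_1$, $\textbf{D}_2$, and $\textbf{D}_3$.
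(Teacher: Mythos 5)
Your proof is correct and follows essentially the same route as the paper: decompose the loop into segments alternating between $\textbf{D}_1$ and $\textbf{D}_2$ with junction points in $\textbf{D}_3$, use connectivity of $\textbf{D}_3$ to replace each segment by a $\textbf{D}_3$-path of the same degree (via liftability of the ambient $\textbf{D}_i$), and conclude because the resulting loop lies in the liftable diagram $\textbf{D}_3$. The bookkeeping concern you flag about junction points is real but is treated at the same level of detail (i.e., asserted) in the paper's own proof.
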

\begin{proof}
	We can focus on the loops which are not in $\textbf{D}_1$ or $\textbf{D}_2$. Any loop of such kind $l:E\dashrightarrow E$ in $\textbf{D}$ can be written as $l_1\circ l_2\circ \cdots \circ l_n$ where $l_{i}:E_i\dashrightarrow E_{i+1}$ is a connecting path in $\textbf{D}_1$ or $\textbf{D}_2$ depending on the parity of $i$, and the indecomposable objects $E_i$ are in the diagram $\textbf{D}_3$ with $E_0=E_{n+1}=E$.
	
	As $\textbf{D}_3$ is connective, we have connecting paths $l_i':E_i\dashrightarrow E_{i+1}$ in $\textbf{D}_3$. Therefore, one can get $$q(l)=\Sigma_{i=1}^n q(l_i)=\Sigma_{i=1}^n q(l_i')=q(l_1'\circ l_2'\circ\cdots\circ l_n')=0.$$
	The second equation follows from the assumption that $\textbf{D}_1$ and $\textbf{D}_2$ are liftable. Hence, the lemma is proved.
\end{proof}

\subsection{Definition of stability conditions}
The following definition is about  the compatibility between the triangulated structure of $\mathcal{C}$ and the real decomposition on $\mathcal{C}$. 

\begin{definition}\label{preliftability}
	Let $\mathcal{C}$ be a $k$-linear Krull-Schmidt cyclic category which admits a real decomposition. We say that $\mathcal{C}$ is pre-liftable with respect to the real decomposition if the following conditions hold: \begin{enumerate}
		\item for any liftable quasi-homogeneous morphism $f:A\rightarrow B$, we can complete it to a distinguished triangle $$A\xrightarrow{f}  B\xrightarrow{g} C\xrightarrow{h} A[1]$$ such that the following diagram $$ \cdots \xrightarrow{g[-1]} A[-1]\xrightarrow{h[-1]} A\xrightarrow{f} B\xrightarrow{g}C\xrightarrow{h}A[1]\xrightarrow{f[1]} B[1]\xrightarrow{g[1]} \cdots $$
		is  liftable,
		\item if we have the following liftable commutative diagram $$\begin{tikzcd}
			X\arrow{d}{u} \arrow{r}{id} & X\arrow{d}{v\circ u} & & \\ Y\arrow{r}{v} \arrow{d}{j} &Z\arrow{r}{l}\arrow{d}{m} & X'\arrow{r}{i} & Y[1] \\ Z'\arrow{d}{k} & Y' \arrow{d}{n}& &  \\ X[1]\arrow{r}{id} & X[1] &&
		\end{tikzcd}$$where all row and  columns and are distinguished triangle, then it can be completed into the following liftable commutative diagram $$\begin{tikzcd}
		X\arrow{d}{u} \arrow{r}{id} & X\arrow{d}{v\circ u} & & \\ Y\arrow{r}{v} \arrow{d}{j} &Z\arrow{r}{l}\arrow{d}{m} & X'\arrow{r}{i} \arrow{d}{id}& Y[1]\arrow{d}{j[1]} \\ Z'\arrow{r}{f}\arrow{d}{k} & Y'\arrow{r}{g} \arrow{d}{n}& X' \arrow{r}{h}& Z[1] \\ X[1]\arrow{r}{id} & X[1] &&
	\end{tikzcd}$$ where all the columns and rows are distinguished triangles.
	\end{enumerate}
	
\end{definition}
Now, we are ready to define stability conditions on a $k$-linear Krull-Schmidt cyclic category.
\begin{definition}\label{stability condition on cyclic categories}
	A stability condition on a $k$-linear Krull-Schmidt cyclic category $\mathcal{C}$ consists of four parts $(\mathcal{Q}, Z, \phi, q)$, where $\mathcal{Q}$ is a circle slicing, i.e. full subcategories $\mathcal{Q}(\psi)$ for any $\psi\in (0,2]$, a central charge $Z:K_0(\mathcal{C})\xrightarrow{v}\Lambda\rightarrow \mathbb{C}$,   a map $\phi$ sending every indecomposable object $E$ to its phase $\phi(E)\in(0,2]$, and $q$ a degree function of a real decomposition of $\mathcal{C}$. Here $\mathcal{C}$ is pre-liftable with respect to the real decomposition and these data  satisfy the following compatibility conditions. 
	
	\begin{enumerate}
		\item We have that $\phi(E[1])\equiv \phi(E)+1 (mod\ 2\mathbb{Z})$.
		
		\item For any indecomposable object $E$,  the central charge can be written as $$Z(E)=m(E)e^{i\pi\phi(E)},$$ where $m(E)\in\mathbb{R}_{\geq 0}$ and $\phi(E)\in(0,2] $. 
		
		\item  For any  homogeneous morphism $f:E_1\rightarrow E_2$ between two indecomposable objects,  we have $$q(f)\equiv \phi(E_2)-\phi(E_1) (mod\ 2\mathbb{Z}).$$
		
		\item $\mathcal{Q}(\psi)[1]=\mathcal{Q}(\psi')$, where $\psi'\equiv \psi+1\ (mod \ 2\mathbb{Z})$ and $\psi, \psi'\in(0,2]$.
		
		\item For any object $E\in\mathcal{Q}(\phi)$,  we  have $m(E)>0$ and $\phi(E)=\phi$.
		\item For any nontrivial homogeneous morphism $f:E_1\rightarrow E_2$,  if  $E_k\in\mathcal{Q}(\phi_k)$ for $k=1,2$, we have $q(f)\geq 0$.
		
		\item For any indecomposable object $E\in \mathcal{C}$,  we have the following filtration:
		
		$$\begin{tikzcd}[column sep=large]
			& E_0 \arrow{d}{f_1}&E_1\arrow{d}{f_2} & \cdots & E_{n-2} \arrow{d}{f_{n-1}} &  E_{n-1}\arrow{d}{f_n}\\ 
				0=E_0  \arrow{ru}{id} & E_1\arrow{d}{p_1} \arrow{ru}{id} & E_2\arrow{d}{p_2}\arrow{ru}{id} &\cdots  \arrow{ru}{id}& E_{n-1}\arrow{ru}{id} \arrow{d}{p_{n-1}} &E_n=E \arrow{d}{p_n}\\ & Q_1 & Q_2& \cdots & Q_{n-1} & Q_n
				\end{tikzcd} $$
	\\\\ 
	such that it satisfies the following conditions:\begin{itemize}
		
		\item the whole diagram  is  connective and liftable,

		\item for any $1\leq i\leq n$, we have $Q_i\in\mathcal{Q}(\phi_i)$ being nontrivial semi-stable objects,
		
			\item the diagram can be completed into the following liftable diagram, 
		$$\begin{tikzcd}[column sep=large]
			& \cdots \arrow{d}{t_1[-1]} & \cdots \arrow{d}{t_2[-1]} & \cdots & \cdots \arrow{d}{t_{n-1}[-1]} & \cdots \arrow{d}{t_n[-1]}\\
			& E_0 \arrow{d}{f_1}&E_1\arrow{d}{f_2} & \cdots & E_{n-2} \arrow{d}{f_{n-1}} &  E_{n-1}\arrow{d}{f_n}\\ 
			0=E_0  \arrow{ru}{id} & E_1\arrow{d}{p_1} \arrow{ru}{id} & E_2\arrow{d}{p_2}\arrow{ru}{id} &\cdots  \arrow{ru}{id}& E_{n-1}\arrow{ru}{id} \arrow{d}{p_{n-1}} &E_n \arrow{d}{p_n}\\ & Q_1\arrow{d}{t_1} & Q_2\arrow{d}{t_2}& \cdots & Q_{n-1}\arrow{d}{t_{n-1}} & Q_n\arrow{d}{t_n}\\ & E_0[1] \arrow{d}{f_1[1]} &E_1[1] \arrow{d}{f_2[1]} &\cdots & E_{n-2}[1]\arrow{d}{f_{n-1}[1]} & E_{n-1}[1]\arrow{d}{f_n[1]} \\ & E_1[1]\arrow{d}{p_1[1]}\arrow{ru}{id} & E_2[1] \arrow{ru}{id} \arrow{d}{p_2[1]}& \cdots \arrow{ru}{id} & E_{n-1}[1]\arrow{ru}{id} \arrow{d}{p_{n-1}[1]}& E_n[1]\arrow{d}{p_n[1]}\\ & \cdots &  \cdots & \cdots & \cdots & \cdots
		\end{tikzcd} $$ where the sequences $$E_i\xrightarrow{f_{i+1}} E_{i+1}\xrightarrow{p_{i+1}}Q_{i+1}\xrightarrow{t_{i+1}} E_{i}[1]$$ are distinguished triangles for all $0\leq i\leq n-1$. We call this diagram the Harder-Narasimhan diagram of $E$,
		
		\item for any $2\leq i\leq n$, there exists a real number $c_i<0$ such that for any two indecomposable summands $Q_{i-1,1}$ and $Q_{i,1}$ of $Q_{i-1}$ and $Q_{i}$ respectively, there exists a simple connecting path from  $Q_{i-1,1}$ to $Q_{i, 1}$ in the diagram, and we have
		$$q(l)=c_i<0$$ any such simple connecting path $l$.

	\end{itemize}
	\end{enumerate}


\end{definition}

	\begin{remarks}\label{Remarks under the definition of stability conditions}
	(i) Note that in condition (2), instead of simply requiring that $m(E)>0$ for all indecomposable objects, we define $\phi(E)$ even when $Z(E)=0$, this is because we want condition (3) to hold even when one of the central charges of $E_1, E_2$ is zero.  
	
	(ii) A filtration of $E$ as in condition (7) is also called a Harder-Narasimhan filtration of $E$, $Q_i$ is called the $i$-th Harder-Narasimhan factor of such a filtration. Its uniqueness will be discussed in Section \ref{section of uniqueness of HN filtration}. Also note that we do not require  $E_i, Q_i$ to be indecomposable.
	
	(iii) Sometimes, such a stability condition may contain more information than we need. Though these extra information could be encoded in a geometric picture, we still want to have an equivalence relation among such stability conditions. This equivalence relation will also be introduced in Section \ref{section of uniqueness of HN filtration}.
	
	(iv) From the definition, one can easily show that for any integer $i$ and any connecting path $l$  between two indecomposable summands of $Q_i$ in condition (7), we have $q(l)=0$.

	(v) To distinguish two different terms of stability conditions, we will call the notion in Definition \ref{stability condition on cyclic categories} stability conditions, while call the notion in Definition \ref{slicing} Bridgeland stability conditions. Usually, the distinction will be  clear in context, depending whether we assume the triangulated category to be cyclic or not.  In the next section, we will see that these two notions of stability conditions though different in general, are closely related to each other.
	
\end{remarks}

\section{The $\mathbb{Z}$-lifts of cyclic categories and stability conditions}

Let $Stab_{cyc}(\mathcal{C})$ to denote the set of stability conditions on a $k$-linear Krull-Schmidt cyclic category. In this section, we will  investigate some basic properties of $Stab_{cyc}(\mathcal{C})$ and other closely related topological spaces. 

We start with the definition of charge triples and charge pairs.

\begin{definition}
	Assume that $\mathcal{C}$ is a $k$-linear Krull-Schmidt cyclic category, then a charge triple $R=(Z,\phi, q)$  consists of a central charge $Z:K_0(\mathcal{C})\rightarrow\Lambda\rightarrow \mathbb{C}$,  a map $\phi$ sending every indecomposable object $E$ to its phase $\phi(E)\in(0,2]$, and $q$ a degree function of a real decomposition on $\mathcal{C}$, which satisfy conditions (1), (2)  and (3) in Definition \ref{stability condition on cyclic categories}.
	
	A pair $(Z,q)$ is called a charge pair if for any two indecomposable objects $E_1,E_2$ with $Z(E_i)=m(E_i)e^{i\theta_i}\neq 0$ for $i=1,2$, we have that $$q(l)=\theta_2-\theta_1 (mod\ 2\ \mathbb{Z})$$ for any connecting path $l:E_1\dashrightarrow E_2$.
\end{definition}

\begin{remark}
	As we will see Lemma \ref{Triples and pairs}, in most cases a charge triple $(Z,\phi,q)$ is determined by its charge pair $(Z,q)$. 
\end{remark}

Let us fix the set of homogeneous morphisms first. And denote the set of charge triples and charge pairs of such a fixed set of homogeneous morphisms on $\mathcal{C}$ by $\textbf{T}(\mathcal{C})$ and $\textbf{P}(\mathcal{C})$ respectively.  There are  natural topologies on these two sets, which are the coarsest topologies such that the forgetful maps $$\textbf{T}(\mathcal{C})\rightarrow Hom(\Lambda,\mathbb{C})\simeq \mathbb{C}^{rank(\Lambda)}, \  \textbf{P}(\mathcal{C})\rightarrow Hom(\Lambda,\mathbb{C})\simeq \mathbb{C}^{rank(\Lambda)}, $$    $$where \ (Z,\phi,q)\mapsto Z,\ (Z,q)\mapsto Z,$$
 $$\textbf{T}(\mathcal{C})\rightarrow  \mathbb{R},\ \textbf{P}(\mathcal{C})\rightarrow  \mathbb{R},$$   
 $$where \ (Z,\phi,q)\mapsto q(f),\ (Z,q)\mapsto q(f),$$
 and $$\textbf{T}(\mathcal{C})\rightarrow  (0,2]\xrightarrow{e^{i\pi x}} S^1,$$ $$where \ (Z,\phi,q)\mapsto \phi(E)$$  are continuous for any indecomposable object $E$ and any nonzero homogeneous morphism $f$. Here $\mathbb{R}$, $S^1$ and $\mathbb{C}^{rank(\Lambda)}$ are endowed with the standard Euclidean topology. There is also a continuous forgetful map $$\textbf{T}(\mathcal{C})\rightarrow \textbf{P}(\mathcal{C}).$$ 

 \begin{lemma}\label{Triples and pairs}
 	Let $\mathcal{C}$ be a $k$-linear Krull-Schmidt connective cyclic category, the continuous map $$h: \textbf{T}(\mathcal{C})\rightarrow \textbf{P}(\mathcal{C})$$ is an isomorphism except on the locus where the central charge $Z$ is trivial.
 \end{lemma}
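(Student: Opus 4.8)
The plan is to construct an explicit inverse to the forgetful map $h$ away from the locus $Z\equiv 0$. Given a charge pair $(Z,q)$, I need to recover a phase function $\phi$ assigning to every indecomposable object $E$ a value $\phi(E)\in(0,2]$ compatible with conditions (1), (2) and (3) of Definition \ref{stability condition on cyclic categories}, and then check that this $\phi$ is the unique such choice. First I would fix, in each connective component of the category with respect to the relation ``there is a connecting path between,'' a base indecomposable object $E_0$ with $Z(E_0)\neq 0$ (this is where we exclude the trivial-charge locus: connectivity of $\mathcal{C}$ together with $Z\not\equiv 0$ guarantees such an $E_0$ exists). Condition (2) forces $\phi(E_0)$ to be the unique value in $(0,2]$ with $Z(E_0)=m(E_0)e^{i\pi\phi(E_0)}$, $m(E_0)>0$.

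Next, for an arbitrary indecomposable $E$, I would choose a connecting path $l:E_0\dashrightarrow E$ (which exists by connectivity) and set $\phi(E) \equiv \phi(E_0)+q(l)\pmod{2\mathbb{Z}}$, taking the representative in $(0,2]$. The key points to verify are: (a) well-definedness, i.e.\ independence of the chosen path; (b) compatibility with the shift, condition (1); (c) the central-charge compatibility, condition (2), namely that when $Z(E)\neq 0$ this $\phi(E)$ really gives the argument of $Z(E)$; and (d) condition (3), that $q(f)\equiv\phi(E_2)-\phi(E_1)\pmod{2\mathbb{Z}}$ for every homogeneous morphism $f:E_1\to E_2$. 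For (a): any two connecting paths from $E_0$ to $E$ differ by a connecting loop at $E_0$, and the difference of their $q$-values is the degree of that loop; since the original charge triple on $\textbf{T}(\mathcal{C})$ (or rather, the need for $\phi$ to be well-defined at all) forces loops relevant here to have integer degree, one argues $q(\text{loop})\in 2\mathbb{Z}$ using the defining property of a charge pair applied to objects with nonzero central charge along the loop — more carefully, a loop based at $E_0$ has $Z(E_0)\neq 0$ at both ends, so the charge-pair axiom gives $q(\text{loop})\equiv 0\pmod{2\mathbb{Z}}$, which is exactly well-definedness modulo $2\mathbb{Z}$. For (d): given $f:E_1\to E_2$, concatenate a path $E_0\dashrightarrow E_1$ with the edge $f$ to get a path $E_0\dashrightarrow E_2$; by construction $\phi(E_2)-\phi(E_1)\equiv q(f)\pmod{2\mathbb{Z}}$. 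For (b): use that $f[1]$ has the same degree as $f$ and that the path $E_0\dashrightarrow E\to$ (anything) interacts correctly with shifting, together with the requirement $\phi(E[1])\equiv\phi(E)+1$; one can simply \emph{define} $\phi$ on shifts by this rule and check consistency against the path definition using that $\beta_E$ has degree $0$ so $E$ and $E[2]$ get the same phase mod $2\mathbb{Z}$, which is consistent with $\phi(E[2])\equiv\phi(E)+2\equiv\phi(E)$.

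The remaining work is to verify that the map $(Z,q)\mapsto(Z,\phi,q)$ so constructed is continuous (hence a homeomorphism onto its domain of definition, by the universal property of the topologies on $\textbf{T}(\mathcal{C})$ and $\textbf{P}(\mathcal{C})$): continuity of $\phi(E)$ in the data follows because $\phi(E)$ is obtained from $\phi(E_0)$ — a continuous function of $Z(E_0)$ — by adding the continuous quantity $q(l)$ and projecting to $(0,2]$ via $e^{i\pi x}$, which matches the topology declared on the $\phi$-coordinate of $\textbf{T}(\mathcal{C})$; and one checks this is independent of local choices so patches together. Finally, uniqueness of $\phi$ given $(Z,q)$ on the non-trivial locus: condition (2) pins down $\phi(E_0)$ exactly, and condition (3) propagates it uniquely along connecting paths to all of $\mathcal{C}$; this shows $h$ is injective there, and the construction shows it is surjective, so $h$ restricts to a bijection, and with continuity in both directions it is an isomorphism of spaces.

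The main obstacle I anticipate is the well-definedness argument (a): one must carefully show that the relevant connecting loops based at an object of nonzero central charge have degree in $2\mathbb{Z}$, which requires invoking the charge-pair axiom precisely (it only constrains $q(l)$ for loops between objects with nonzero $Z$), and then separately handling indecomposable objects $E$ with $Z(E)=0$ — for those, $\phi(E)$ is defined purely via the path formula and condition (3) must still be checked for morphisms into and out of such objects, using that any connecting loop through $E$ can be ``pushed off'' $E$ because $E$ is connected by paths to objects of nonzero charge. Care is also needed near the boundary value $2$ of the interval $(0,2]$ when choosing representatives, but that is a bookkeeping matter rather than a genuine difficulty.
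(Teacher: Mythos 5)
Your proposal is correct and follows essentially the same route as the paper: fix an indecomposable $E_0$ with $Z(E_0)\neq 0$, note that condition (2) pins down $\phi(E_0)$, and propagate $\phi$ along connecting paths using connectivity and condition (3). The paper's own proof is just these three sentences; your additional verifications (well-definedness via the charge-pair axiom on loops based at $E_0$, surjectivity, and continuity) are details the paper leaves implicit, not a different argument.
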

 
 \begin{proof}
 	Suppose that there exists an indecomposable object $E$ with $Z(E)\neq 0$, then $\phi(E)$ is determined by $Z(E)$. For any other indecomposable object $F$, by the assumption that $\mathcal{C}$ is connective, there exists a connecting path from $E$ to $F$. Hence $\phi(E)$ and $q(l)$ determines $\phi(F)$.
 \end{proof}
 
 \begin{remark}
 	One can easily show that on the locus where the central charge is trivial, the map $h$ is an $S^1$ bundle map. By condition (5) in Definition \ref{stability condition on cyclic categories}, there is no stability conditions above this locus.
 \end{remark}

  In the next subsection, we will show that on distinguished components in $\textbf{T}(\mathcal{C})$ with vanishing Maslov indices,  the stability conditions can be lifted to be Bridgeland stability conditions.

\subsection{Maslov index and $\mathbb{Z}$ lifting}

Given a real decomposition on $\mathcal{C}$, and assume that $\mathcal{C}$ is pre-liftable with respect to the real decomposition. We can define what is a basic loop in such a real decomposition.
\begin{definition}\label{Fundamental distinguished hexagon}
	Assume that $\mathcal{C}$ admits a real decomposition and $q$ is the associated degree function, a fundamental distinguished hexagon in  $\mathcal{C}$ is the following locally liftable diagram 
	
	$$\begin{tikzcd}
		& \bigoplus_{j=1}^mB_j\arrow{r}{g} & \bigoplus_{k=1}^n C_k\arrow{rd}{h} & \\ \bigoplus_{i=1}^lA_i\arrow{ru}{f} & & & \bigoplus_{i=1}^l A_i[1]\arrow{ld}{f[1]} \\ & \bigoplus_{k=1}^nC_k[1]\arrow{lu}{\beta_A\circ h[1]}  & \bigoplus_{j=1}^mB_j[1] \arrow{l}{g[1]}
	\end{tikzcd}$$
 where $$\bigoplus_{i=1}^lA_i\xrightarrow{f} \bigoplus_{j=1}^mB_j\xrightarrow{g}\bigoplus_{k=1}^nC_k\xrightarrow{h}\bigoplus_{i=1}^lA_i[1]$$ is a distinguished triangle in $\mathcal{C}$.

A basic loop $l$ in $\mathcal{C}$ is $\beta_{A_i}\circ f[1]\circ f$, where $f:A_i\dashrightarrow A_i[1]$ is a connecting path in the diagram of the distinguished triangle.  
\end{definition}
\begin{remark}
	The same loop can also be defined as basing at $B_j$ or $C_k$ as $\beta$ is a natural transformation.
\end{remark}

We can define what is the Maslov index of a basic loop.

\begin{definition}\label{Maslov index}
Given a basic loop in $\mathcal{C}$ as in Definition \refeq{Fundamental distinguished hexagon}, its Maslov index is $$M(l)\coloneqq  \frac{q(f)-1}{2}.$$
\end{definition}

As its namesake in Fukaya categories, we will show that if all the Maslov indices vanish, there is a suitable $\mathbb{Z}$-lift of $\mathcal{C}$.

\begin{theorem}\label{lift of cyclic categoryies}
	Given a stability condition $\sigma=(\mathcal{Q},Z,\phi, q)$ on a $k$-linear Krull-Schmidt cyclic category $\mathcal{C}$, we assume that  the Maslov indices of  all basic loops are zero. There are  $\mathbb{Z}$-lifts of $\sigma$ and $\mathcal{C}$, which we denote by $\widetilde{\sigma}$ and $\widetilde{\mathcal{C}}$ respectively, such that $\widetilde{\sigma}$ is a Bridgeland stability condition on $\widetilde{\mathcal{C}}$.
\end{theorem}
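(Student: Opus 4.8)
The plan is to build the $\mathbb{Z}$-lift $\widetilde{\mathcal{C}}$ as a category whose objects are pairs $(E, \widetilde{\phi})$ consisting of an indecomposable object $E \in \mathcal{C}$ together with a real lift $\widetilde{\phi} \in \mathbb{R}$ of its phase $\phi(E) \in (0,2] \pmod{2\mathbb{Z}}$, extended additively to all objects by the Krull--Schmidt decomposition; morphisms $(E,\widetilde{\phi}_E) \to (F,\widetilde{\phi}_F)$ are, on indecomposables, the homogeneous summands $f \in Hom_a(E,F)$ with $a = \widetilde{\phi}_F - \widetilde{\phi}_E$ (a single well-defined real number now, not a class mod $2\mathbb{Z}$, using condition (3) of Definition \ref{stability condition on cyclic categories}), and in general the matrices of such. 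The shift is $[1](E,\widetilde{\phi}) = (E[1], \widetilde{\phi}+1)$, which is now an honest $\mathbb{Z}$-action with no periodicity, and the projection $\pi: \widetilde{\mathcal{C}} \to \mathcal{C}$ forgets $\widetilde{\phi}$. The first main step is to check this is well-defined as a $k$-linear category: composition is well-defined because condition (2) of Definition \ref{real decomposition} makes the degrees add, and the issue of whether a given homogeneous morphism lands in the chosen graded piece is exactly governed by the liftability data.

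The heart of the argument — and the place where the vanishing Maslov hypothesis is used — is promoting the triangulated structure of $\mathcal{C}$ to $\widetilde{\mathcal{C}}$. Given a distinguished triangle $A \xrightarrow{f} B \xrightarrow{g} C \xrightarrow{h} A[1]$ in $\mathcal{C}$ with $f$ a liftable quasi-homogeneous morphism, pre-liftability (Definition \ref{preliftability}(1)) gives a liftable rotation diagram; I would use this, together with the fundamental distinguished hexagon of Definition \ref{Fundamental distinguished hexagon}, to produce a \emph{consistent} choice of phase-lifts $\widetilde{\phi}_B, \widetilde{\phi}_C$ once $\widetilde{\phi}_A$ is fixed. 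The consistency condition — that going around the hexagon $A \to B \to C \to A[1]$ and back via $\beta$ returns to the same real lift — is precisely the statement that the basic loop $\beta_A \circ f[1] \circ f$ has $q$-value equal to the shift by $2$, i.e. $q(f) = 1$ after accounting for the two shifts, which is $M(l) = 0$. So the vanishing of all Maslov indices is exactly what is needed for the set of distinguished triangles in $\widetilde{\mathcal{C}}$ (declared to be the lifts of distinguished triangles in $\mathcal{C}$ that respect the chosen gradings) to be closed under rotation; the octahedral axiom is then inherited from $\mathcal{C}$ using the locally-liftable completion in Definition \ref{preliftability}(2) and the factorization results (Lemma \ref{Factorization lemma}, Proposition \ref{Factorization proposition}, Corollary \ref{Fill-in corollary}) to fill in the new vertices with quasi-homogeneous morphisms of the correct degree. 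The Bott isomorphism $\beta$ is forgotten, so $\widetilde{\mathcal{C}}$ genuinely carries a $\mathbb{Z}$-grading rather than a $\mathbb{Z}/2\mathbb{Z}$-grading.

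Having built $\widetilde{\mathcal{C}}$, I would construct $\widetilde{\sigma} = (\widetilde{\mathcal{P}}, \widetilde{Z})$ by setting $\widetilde{Z}(E,\widetilde{\phi}) := m(E)e^{i\pi\widetilde{\phi}}$ (extended additively; well-defined on $K_0(\widetilde{\mathcal{C}})$ because $\widetilde{Z}$ of a distinguished triangle's three terms sums to zero, as the central charges match those of $\mathcal{C}$ up to the sign bookkeeping of shifts) and $\widetilde{\mathcal{P}}(\phi) := \{ (E,\widetilde{\phi}) : \text{the } \mathcal{Q}\text{-data of } E \text{ places it in } \mathcal{Q}(\psi) \text{ with } \psi \equiv \phi, \widetilde{\phi} = \phi \}$. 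Axiom (b) of Definition \ref{slicing} is immediate from the definition of the shift; axiom (a) is the definition of $\widetilde{Z}$; axiom (c), the $\operatorname{Hom}$-vanishing between descending phases, follows from condition (6) of Definition \ref{stability condition on cyclic categories} (nonnegativity of $q$ between semistables) combined with condition (3) lifting $q(f) \geq 0$ to an honest inequality $\widetilde{\phi}_F \geq \widetilde{\phi}_E$; and axiom (d), existence of Harder--Narasimhan filtrations, is exactly the lift of the Harder--Narasimhan diagram in condition (7) of Definition \ref{stability condition on cyclic categories}, where the strict inequalities $c_i < 0$ between consecutive factors guarantee that the lifted phases are strictly decreasing. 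I expect the main obstacle to be the bookkeeping in the triangulated-structure step: verifying that the choices of phase-lifts made triangle-by-triangle are globally consistent (so that $\widetilde{\mathcal{C}}$ is a category and not just a collection of locally-defined data) requires carefully chaining the liftability of connecting paths through the pre-liftability diagrams, and it is here that connectivity of the Harder--Narasimhan diagrams and the vanishing-Maslov hypothesis must be combined — everything else is a fairly direct translation of the axioms.
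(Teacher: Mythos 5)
Your proposal is correct and follows essentially the same route as the paper's proof: objects of $\widetilde{\mathcal{C}}$ are pairs $(E,\widetilde{\phi})$ with morphisms the homogeneous pieces of degree $\widetilde{\phi}_F-\widetilde{\phi}_E$, distinguished triangles are declared to be the lifts of distinguished triangles in $\mathcal{C}$, pre-liftability plus the vanishing of all Maslov indices gives closure of the triangle rotation (TR1), the fill-in results give TR3, Definition \ref{preliftability}(2) gives TR4, and $\widetilde{\sigma}$ is obtained by pulling back $Z$ and $\mathcal{Q}$ along $\pi$. Your spelled-out verification of the Bridgeland axioms (a)--(d), which the paper leaves to the reader, identifies the same ingredients (conditions (3), (6), (7) of Definition \ref{stability condition on cyclic categories}) that the paper's construction relies on.
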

\begin{proof}
	The natural $\mathbb{Z}$-lift of $\mathcal{C}$ is defined in the following way: the indecomposable objects are pairs $(E,\phi)$, where $E$ is an indecomposable object in $\mathcal{C}$ and $\phi\in\mathbb{R}$ such that $$\phi(E)\equiv \phi (mod\ 2\mathbb{Z}).$$ 
	
	The morphism space between two such indecomposable objects is defined by the following formula $$Hom_{\widetilde{\mathcal{C}}}((E_1,\phi_1),(E_2,\phi_2))=\{f\in  Hom_{\mathcal{C}}(E_1,E_2)| f\ is \ homogeneous \ and\ q(f)=\phi_2-\phi_1\}.$$ 

	Moreover, one can easily show that the composition of morphisms is well defined because of the condition (2)  of real decomposition. Note that we can define a $k$-linear functor $\pi:\widetilde{\mathcal{C}}\rightarrow \mathcal{C}$ in a natural way. It sends objects $(E,\phi)$ to $E$ and be the natural inclusion on the spaces of morphisms.  
	
	The triangulated structure on $\widetilde{\mathcal{C}}$ is defined in the following way. The shift functor $[1]$ sends $(E,\phi)$ to $(E[1],\phi+1)$, and is the natural isomorphism on the spaces of morphisms due to condition (1) of real decomposition. $$Hom_{\widetilde{\mathcal{C}}}((E_1,\phi_1),(E_2,\phi_2))\xrightarrow{\sim}Hom_{\widetilde{\mathcal{C}}}((E_1[1],\phi_1+1),(E_2[1],\phi_2+1)).$$  Obviously, one has $\pi\circ  [1]= [1]\circ \pi$.  We define a triangle in $\widetilde{\mathcal{C}}$ to be a distinguished triangle if and only if its image under $\pi$ is a distinguished triangle. 
	
	We need to check that this indeed define a triangulated category $\widetilde{\mathcal{C}}$ (for the definition of triangulated category, we refer to  \cite[Chapter 1]{Neemanstriangulatedcategories}). 
	
	First of all, assume that we have a morphism $$f:\bigoplus_{i=1}^l (A_i,\phi_i)\rightarrow\bigoplus_{i=1}^m (B_j,\psi_j)$$ in $\widetilde{\mathcal{C}}$. The image of $f$ under $\pi$ is a liftable quasi-homogeneous morphism, hence by the assumption that $\mathcal{C}$ is pre-liftable, there exists a distinguished triangle $$\bigoplus_{i=1}^lA_i\xrightarrow{\pi(f)} \bigoplus_{j=1}^mB_j\xrightarrow{g}\bigoplus_{k=1}^nC_k\xrightarrow{h}\bigoplus_{i=1}^lA_i[1]$$ in $\mathcal{C}$, and the diagram is liftable. Therefore, there exists a pre-image of such distinguished triangle in $\widetilde{\mathcal{C}}$ $$\bigoplus_{i=1}^l (A_i,\phi_i)\xrightarrow{f}\bigoplus_{i=1}^m (B_j,\psi_j)\xrightarrow{g}\bigoplus_{1\leq k\leq n, d\in\mathbb{Z}} (C_k, \phi(C_k)+2d)\xrightarrow{h}\bigoplus_{i=1}^l(A[1], \phi_i+1).$$ 
	
	 The last term can be written as $\bigoplus_{i=1}^l(A[1], \phi_i+1)$ since all Maslov indices of basic loops vanish.
	
	Hence axioms \textbf{TR1} is satisfied by our assumption that $\mathcal{C}$ is pre-liftable and all Maslov indices vanish. By Corollary \ref{Fill-in corollary}, we know that $\textbf{TR3}$ is also satisfied. The axiom \textbf{TR2}  hold in $\widetilde{\mathcal{C}}$ since they already hold in $\mathcal{C}$. The octahedral axiom \textbf{TR4} follows from condition (2) in Definition \ref{preliftability}.
	
	For $\widetilde{\sigma}=(\mathcal{P},\widetilde{Z})$ on $\widetilde{\mathcal{C}}$, we define the central charge $\widetilde{Z}$ to be the composition $K_0(\widetilde{\mathcal{C}})\xrightarrow{\pi_0}K_0(\mathcal{C})\xrightarrow{Z} \mathbb{C}$. And $\mathcal{P}(\phi)$ to be the full subcategory consists of objects $(E,\phi)$ such that $E\in\mathcal{Q}(\phi')$ where $\phi'\equiv \phi\ (mod \ 2\mathbb{Z})$. One can easily check that $\widetilde{\sigma}$ is a Bridgeland stability condition on $\widetilde{\mathcal{C}}$ by unwinding the definitions.
\end{proof}

\begin{remarks}\label{Remarks after the lifting theorem}
	(1) The Octahedral Axiom (TR 4) of $\widetilde{\mathcal{C}}$ holds  by the second condition in Definition \ref{preliftability}. However, in practice, this condition could be difficult  to check. The reader could ignore that condition, and take $\widetilde{\mathcal{C}}$ to be just a pre-triangulated category. All the results in this paper holds in that situation, as we do not use the Octahedral Axiom in  our proofs. 
	
(2) We call the functor $\pi:\widetilde{\mathcal{C}}\rightarrow \mathcal{C}$ a $\mathbb{Z}$-covering since it is similar to the $\mathbb{Z}$-covering of topological spaces. We will use $\pi^*(\sigma)$ to denote $\widetilde{\sigma}$. Also note that the $\mathbb{Z}$-lift $\widetilde{\mathcal{C}}$ only depends on the charge triple. By its construction we know that the functor $\pi$ is exact and faithful. 

(3) The functor $\pi$ induce a surjective homomorphism on Grothendieck groups $\pi_0: K_0(\widetilde{\mathcal{C}})\twoheadrightarrow K_0(\mathcal{C})$. Hence the central charge $$Z:K_0(\widetilde{\mathcal{C}})\xrightarrow{\pi_0}K_0(\mathcal{C})\xrightarrow{v}\Lambda\rightarrow\mathbb{C}$$ also factors through the given lattice $\Lambda$. In the following, we usually fix the factorization  $K_0(\widetilde{\mathcal{C}})\xrightarrow{\pi_0}K_0(\mathcal{C})\xrightarrow{v}\Lambda$.
\end{remarks}

According to Theorem \ref{lift of cyclic categoryies}, the following definitions is natural.
\begin{definition}\label{Definition of liftable and deformation equivalence}
	(1) We say that $\mathcal{C}$ is liftable with respect to $R=(Z,\phi,q)$ if $\mathcal{C}$ is pre-liftable with respect to the real decomposition in $R$ and all the Maslov indices vanish.
	
	(2) Let $R_1=(Z_1,\phi_1,q_1)$ and $R_2=(Z_2,\phi_2, q_2)$ be two charge triples on $\mathcal{C}$, if for any two connecting paths $l_1,l_2:E\dashrightarrow F$, we have that $l_1,l_2$ are homogeneous with respect to $R_1$ if and only if $l_1,l_2$ are homogeneous with respect to $R_2$, and moreover $$q_1(l_1)-q_1(l_2)=q_2(l_1)-q_2(l_2).$$ 
	Then we say that $R_1, R_2$ are deformation equivalent.
\end{definition}

\begin{prop}\label{connections of Z-lifts}
	Let $\mathcal{C}$ be a $k$-linear Krull-Schmidt connective cyclic category, and $R_1=(Z_1,\phi_1,q_1)$ and $R_2=(Z_2,\phi_2, q_2)$ be two charge triples on $\mathcal{C}$. Suppose that $R_1,R_2$ are deformation equivalent and $\mathcal{C}$ is  liftable with respect to $R_1$. Then $\mathcal{C}$ is also liftable with respect to $R_2$.  
	
	Moreover,  if we denote the $\mathbb{Z}$-lifts by $\widetilde{\mathcal{C}}_1$ and $\widetilde{\mathcal{C}}_2$ respectively, there exists an  equivalence (not canonical)  $H:\widetilde{\mathcal{C}}_1\simeq \widetilde{\mathcal{C}}_2$.
\end{prop}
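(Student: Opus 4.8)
The plan is to isolate the fact that, with the set of homogeneous morphisms fixed and $\mathcal{C}$ connective, deformation equivalence of $R_1$ and $R_2$ is equivalent to the single statement that $q_1(\ell)=q_2(\ell)$ for every connecting loop $\ell$, and then to observe that both ``liftability'' and the relevant $\mathbb{Z}$-lift are governed only by the degrees of such loops. Concretely, since the homogeneous morphisms are fixed, the notions of quasi-homogeneous morphism and of connecting path/loop (Definition \ref{connecting path}) do not refer to the charge triple. Applying Definition \ref{Definition of liftable and deformation equivalence}(2) to a connecting loop $\ell$ at $E$ paired with the connecting loop $E\xrightarrow{id_E}E$ (of degree $0$ by condition (3) of Definition \ref{real decomposition}) shows that deformation equivalence forces $q_1(\ell)=q_2(\ell)$ for all connecting loops $\ell$, and the converse is immediate. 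Hence a commutative diagram $\textbf{D}$ with quasi-homogeneous arrows is liftable, locally liftable, or connective (Definition \ref{liftable and locally liftable diagrams}) with respect to $R_1$ iff it is so with respect to $R_2$, since these properties only involve quasi-homogeneity and the vanishing of $q(l)$ on the connecting loops inside $\textbf{D}$. As each clause of Definition \ref{preliftability} refers only to such notions, $\mathcal{C}$ is pre-liftable with respect to $R_1$ iff with respect to $R_2$.

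Next I would treat the Maslov indices. For a basic loop $l=\beta_{A_i}\circ f[1]\circ f$ attached to a fundamental distinguished hexagon (Definition \ref{Fundamental distinguished hexagon}), with $f\colon A_i\dashrightarrow A_i[1]$ the connecting path in the triangle's diagram, $l$ is itself a connecting loop, and conditions (1) and (3) of Definition \ref{real decomposition} give $q(l)=q(f)+q(f[1])+q(\beta_{A_i})=2q(f)$. So by Definition \ref{Maslov index} its Maslov index $M(l)=\tfrac{q(f)-1}{2}=\tfrac{q(l)-2}{4}$ is determined by $q(l)$, which by the previous paragraph is the same whether computed with $q_1$ or $q_2$. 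Thus all Maslov indices vanish for $R_1$ iff they vanish for $R_2$, and combined with pre-liftability this proves the first assertion: $\mathcal{C}$ is liftable with respect to $R_2$.

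For the equivalence $H$, I would build an explicit regrading over $\mathcal{C}$. By connectivity, for indecomposables $E_1,E_2$ the real number $c(E_1,E_2):=q_1(l)-q_2(l)$ is independent of the connecting path $l\colon E_1\dashrightarrow E_2$ (Definition \ref{Definition of liftable and deformation equivalence}(2)); it satisfies $c(E_1,E_2)+c(E_2,E_3)=c(E_1,E_3)$, and $c(E,E[1])=0$ because the doubling identity applied to the connecting loop $\beta_E\circ l[1]\circ l$ (with $l\colon E\dashrightarrow E[1]$) gives $2q_1(l)=q_1(\beta_E\circ l[1]\circ l)=q_2(\beta_E\circ l[1]\circ l)=2q_2(l)$. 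Fixing a base object $E_0$ and setting $\delta(E):=c(E_0,E)+\phi_1(E_0)-\phi_2(E_0)$, one gets $\delta(E_2)-\delta(E_1)=c(E_1,E_2)=q_1(g)-q_2(g)$ for every homogeneous $g\colon E_1\to E_2$, $\delta(E[1])=\delta(E)$, and, by condition (3) of Definition \ref{stability condition on cyclic categories} summed along a connecting path, $\delta(E)\equiv\phi_1(E)-\phi_2(E)\ (mod\ 2\mathbb{Z})$. Define $H\colon\widetilde{\mathcal{C}}_1\to\widetilde{\mathcal{C}}_2$ by $H(E,t)=(E,\,t-\delta(E))$ on objects and $H(g)=g$ on morphisms. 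The congruence guarantees $H(E,t)\in\widetilde{\mathcal{C}}_2$; the relation $q_1(g)-q_2(g)=\delta(E_2)-\delta(E_1)$ shows $H$ is a well-defined, fully faithful $k$-linear functor; and $(E,s)\mapsto(E,s+\delta(E))$ is a quasi-inverse on objects, so $H$ is an equivalence. Since $H$ is the identity on underlying objects and morphisms, $\pi_2\circ H=\pi_1$, so $H$ preserves and reflects distinguished triangles (these being detected by $\pi_i$), while $\delta(E[1])=\delta(E)$ yields $H\circ[1]=[1]\circ H$; hence $H$ is an exact equivalence.

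The main obstacle is entirely in the first step: isolating the clean reformulation of deformation equivalence and checking that \emph{every} liftability clause in Definitions \ref{liftable and locally liftable diagrams} and \ref{preliftability}, as well as the Maslov index, depends only on the degrees of connecting loops (the doubling identity $q(l)=2q(f)$ being the one slightly non-obvious ingredient). After that the construction of $H$ is bookkeeping, the only delicate points being the choice of additive constant in $\delta$ and the identity $c(E,E[1])=0$, which are precisely what make $H$ land in $\widetilde{\mathcal{C}}_2$ and commute with the shift. No triangulated-category input beyond that already used in the proof of Theorem \ref{lift of cyclic categoryies} is needed, since $\widetilde{\mathcal{C}}_1$ and $\widetilde{\mathcal{C}}_2$ both sit over the same $\mathcal{C}$ and $H$ is a relabeling over it.
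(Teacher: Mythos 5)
Your proposal is correct and follows essentially the same route as the paper: reduce deformation equivalence to equality of $q_1,q_2$ on connecting loops (pairing a loop with $id_E$), deduce that pre-liftability and the Maslov indices are unchanged via the doubling identity $q(\beta\circ f[1]\circ f)=2q(f)$, and build $H$ by transporting phases along connecting paths from a fixed base object, with well-definedness from deformation equivalence and shift-compatibility from $c(E,E[1])=0$. Your packaging of the relabeling into the single function $\delta$ is just a cleaner bookkeeping of the paper's object-by-object definition of $H$, and the exactness argument via $\pi_2\circ H=\pi_1$ is the same.
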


\begin{proof}
By definition, it is easy to see  that  $\mathcal{C}$ is pre-liftable with respect to $R_2$. Indeed, for any basic loop $l_1:A\dashrightarrow A$ in $\mathcal{C}$, take $l_2=id_A$, we get $0=q_1(l_1)=q_2(l_2)$. Hence $\mathcal{C}$ is also liftable with respect to $R_2$.
	
	To construct an equivalence $H$, let  $E$ be a nontrivial indecomposable object in $\mathcal{C}$, we can define $H:\widetilde{\mathcal{C}}_1\rightarrow \widetilde{\mathcal{C}}_2$ in the following way. First of all $$H((E,\phi_1(E)+2k))=(E,\phi_2(E)+2k), $$ for any $k\in\mathbb{Z}$. 
	
	Take another indecomposable object $F$, there exists a connecting path $l$ starting from $E$ ending at $F$ in $\widetilde{\mathcal{C}}_1$. We set $$H((F,\phi_1(E)+q_1(l)+2k))\coloneqq (F,\phi_2(E)+q_2(l)+2k)$$ for any $k\in\mathbb{Z}$.  Our assumption $$q_1(l_1)-q_1(l_2)=q_2(l_1)-q_2(l_2) $$ ensures that it is independent of the choice of connecting paths. As $\mathcal{C}$ is connective,  there exists a connecting path $l':F\dashrightarrow F[1]$. As $l''\coloneqq \overline{\beta\circ l'[1]}:F\dasharrow F[1]$ is another connecting from $F$ tp $F[1]$. By assumption we know that $$2q_1(l')=q_1(l')-q_1(l'')=q_2(l')-q_2(l'')=2q_2(l'), $$ which implies that $q_1(l')=q_2(l')$. Hence we get \begin{equation*}
		\begin{split}
		H((F,\phi_1(E)+q_1(l)+q_1(l')+2k-1)[1])  & =H((F[1], \phi_1(E)+q_1(l'\circ l)+2k))\\ &= (F[1],\phi_2(E)+q_2(l')+q_2(l)+2k) \\ &=(F,\phi_2(E)+q_1(l')+q_2(l)+2k-1)[1]
		\end{split}
	\end{equation*} This implies that $H\circ [1]=[1]\circ H$ on objects.
	
	For the morphisms, assume that $$f\in Hom_{\mathcal{C}}(G, G')$$ is a homogeneous morphism between  two indecomposable objects $G,G'\in\mathcal{C}$, and $$l:E\dashrightarrow G$$ is a connecting path from $E$ to $G$.  The morphism $f$ can be lifted to $$f'\in Hom_{\widetilde{\mathcal{C}}_1}((G,\phi_1(E)+q_1(l)+2k),(G', \phi_1(E)+q_1(l)+q_1(f)+2k)),$$ and we define $$H(f')\in Hom_{\widetilde{\mathcal{C}}_2}((G,\phi_2(E)+q_1(l)+2k), (G',\phi_2(E)+q_2(l)+q_2(f)+2k)).$$

	From the definition of distinguished triangles in $\widetilde{\mathcal{C}}_1$ and $\widetilde{\mathcal{C}}_2$, it is easy to see that $H:\widetilde{\mathcal{C}}_1\rightarrow \widetilde{\mathcal{C}}_2$ sends distinguished triangles to distinguished triangles. Therefore, $H$ is an exact functor between triangulated categories.

	By the symmetry of $R_1$ and $R_2$, we can define a functor $G:\widetilde{\mathcal{C}}_2\rightarrow\widetilde{\mathcal{C}}_1$ starting from the same indecomposable object $E\in\mathcal{C}$.
		It is easy to check that $H,G$ are inverse to each other as exact functors between triangulated categories.
\end{proof}
\begin{remarks}\label{Remarks below Z-lifts}

(i) As we can see in the proof, different choices of $E$ may result in different equivalences, hence the equivalence is not canonical. By the assumption that $\mathcal{C}$ is connective, one can show that there are $\mathbb{Z}$-copies of equivalences in total.

(ii) This equivalence induces a commutative diagram between coverings.

$$\begin{tikzcd}
	\widetilde{\mathcal{C}}_1 \arrow{rd}[swap]{\pi_1} \arrow{rr}{H}& & \widetilde{\mathcal{C}}_2 \arrow {ld}{\pi_2} \\ & \mathcal{C} & 
\end{tikzcd}$$

(iii) Consistent choices of such equivalences can be viewed as a connection on the $\mathbb{Z}$-lifts fibered over a connected component of $\textbf{T}(\mathcal{C})$ where $\mathcal{C}$ is liftable with respect to the charge triples in that component.

(iv) In theory, there could be multiple components of charge triples on which $\mathcal{C}$ is liftable, and the lifted categories are not equivalent to each other.

\end{remarks}

\section{Uniqueness of Harder-Narasimhan filtration}\label{section of uniqueness of HN filtration}

In this section, we will discuss the uniqueness of Harder-Narasimhan filtration. We start with the following lemma.



\begin{lemma}\label{important lemma}
	Let $\mathcal{C}$ be a $k$-linear Krull-Schmidt connective cyclic category,  $\sigma=(\mathcal{Q}, Z,\phi,q)$ be a stability condition on $\mathcal{C}$. Assume that all the Maslov indices of basic loops are nonnegative and let 
		$$\begin{tikzcd}[column sep=large]
		& \cdots\arrow{d} & \cdots\arrow{d} & \cdots & \cdots \arrow{d}& \cdots\arrow{d} \\
		& E_0 \arrow{d}{f_1}&E_1\arrow{d}{f_2} & \cdots & E_{n-2} \arrow{d}{f_{n-1}} &  E_{n-1}\arrow{d}{f_n}\\ 
		0=E_0  \arrow{ru}{id} & E_1\arrow{d}{p_1} \arrow{ru}{id} & E_2\arrow{d}{p_2}\arrow{ru}{id} &\cdots  \arrow{ru}{id}& E_{n-1}\arrow{ru}{id} \arrow{d}{p_{n-1}} &E_n=E\arrow{d}{p_n}\\ & Q_1 \arrow{d}& Q_2\arrow{d}& \cdots & Q_{n-1}\arrow{d} & Q_n\arrow{d}\\ & \cdots & \cdots & \cdots & \cdots & \cdots 
	\end{tikzcd} $$
	be a Harder-Narasimhan filtration of an indecomposable object $E$ as in condition (7) of Definition \ref{stability condition on cyclic categories} with $n\geq 2$. Further assume that we have the following liftable commutative diagram between distinguished triangles,
$$\begin{tikzcd}
	\cdots \arrow{r} &E_{i-1} \arrow{r}{f_i} \arrow{d}[swap]{\alpha}& E_i \arrow{r}{p_i}\arrow{d}{id} & Q_i \arrow{d}{\delta} \arrow{r}{t_i} & E_{i-1}[1] \arrow{d}{\alpha[1]}\arrow{r} & \cdots \\ \cdots \arrow{r} &	E_{i-1} \arrow{r}{f_i} & E_i \arrow{r}{p_i} & Q_i \arrow{r}{t_i}  & E_{i-1}[1] \arrow{r} & \cdots 
\end{tikzcd}$$ and if we glue two copies of Harder-Narasimhan diagrams along each rows in this diagram respectively, we get a liftable commutative diagram. Then $\delta=id$ and  $\alpha=id$.
\end{lemma}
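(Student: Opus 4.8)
The plan is to recast the statement as a vanishing assertion for a self-morphism of the Harder--Narasimhan triangle with zero middle term. Put $\psi := \alpha - \mathrm{id}_{E_{i-1}}$ and $\chi := \delta - \mathrm{id}_{Q_i}$. Since the given diagram is a morphism of distinguished triangles from $E_{i-1}\xrightarrow{f_i}E_i\xrightarrow{p_i}Q_i\xrightarrow{t_i}E_{i-1}[1]$ to itself, and the identity morphism of this triangle is another such morphism, the triple $(\psi,0,\chi)$ is again a morphism of distinguished triangles; so it suffices to prove $\psi=0$ and $\chi=0$. The three commuting squares give $f_i\circ\psi=0$, $\chi\circ p_i=0$, and $t_i\circ\chi=\psi[1]\circ t_i$. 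From $f_i\circ\psi=0$, $\chi\circ p_i=0$ and the triangle axioms, $\psi$ factors through $t_i[-1]\colon Q_i[-1]\to E_{i-1}$ and $\chi$ factors through $t_i\colon Q_i\to E_{i-1}[1]$; I would use Corollary \ref{Fill-in corollary} together with Lemma \ref{Factorization lemma} to install these factorizations $\psi=t_i[-1]\circ u$ and $\chi=w\circ t_i$ inside a single liftable commutative diagram obtained by enlarging the glued Harder--Narasimhan diagrams, so that $u$ and $w$ are quasi-homogeneous and all the degree matrices below are well defined.

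The heart of the argument is the degree bookkeeping for $\chi$. Decomposing $Q_i=\bigoplus_a Q_{i,a}$ into indecomposables, each entry of the degree matrix of $\chi$ in the enlarged liftable diagram is the degree of a connecting path between two summands of the single Harder--Narasimhan factor $Q_i$, hence is $0$ by Remark (iv) in \ref{Remarks under the definition of stability conditions}; so $\chi$ is homogeneous of degree $0$ on every component (and similarly $\psi$ respects the Harder--Narasimhan grading of $E_{i-1}$). A nonzero component $\chi_{ba}$, via $\chi=w\circ t_i$, produces a length-two connecting path $Q_{i,a}\xrightarrow{(t_i)}P\xrightarrow{(w)}Q_{i,b}$ through a summand $P$ of $E_{i-1}[1]$, whose total degree must then be $0$ since it lies in a liftable diagram and connects two summands of $Q_i$. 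On the other hand, the hypothesis that all Maslov indices are nonnegative, applied to the fundamental distinguished hexagon of $E_{i-1}\to E_i\to Q_i\to E_{i-1}[1]$ (Definitions \ref{Fundamental distinguished hexagon} and \ref{Maslov index}), forces every connecting path going once around this triangle to have degree $\ge 1$; and conditions (6) and (3) of Definition \ref{stability condition on cyclic categories} force any connecting path from a lower-phase semistable piece into a higher-phase semistable piece to carry a strictly positive degree defect. Combining these, the $t_i$-step $Q_{i,a}\to P$ is pinned strictly above the negative of the degree of any step $P\to Q_{i,b}$ into the phase-$\phi_i$ piece $Q_{i,b}$, contradicting total degree $0$. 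Hence $\chi=0$.

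Once $\chi=0$, the relation $\psi[1]\circ t_i=t_i\circ\chi=0$ shows $\psi[1]$ factors through $f_i[1]$, so $\psi=\nu\circ f_i$ for some quasi-homogeneous $\nu\colon E_i\to E_{i-1}$ (again placed inside a liftable diagram), whence $f_i\circ\psi=0$ gives $f_i\circ\nu\circ f_i=0$. The last Harder--Narasimhan factor $Q_i$ of $E_i$ has phase $\phi_i$, strictly below the phases of all the Harder--Narasimhan factors $Q_1,\dots,Q_{i-1}$ of $E_{i-1}$ because the numbers $c_j$ in condition (7) of Definition \ref{stability condition on cyclic categories} are strictly negative; so by conditions (6) and (3) any homogeneous morphism out of a $Q_i$-piece of $E_i$ into $E_{i-1}$ carries a strictly positive degree defect, which (exactly as for $\chi$) is incompatible with $\psi$ respecting the Harder--Narasimhan grading unless $\psi$ vanishes. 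Therefore $\psi=0$ and $\chi=0$, i.e.\ $\alpha=\mathrm{id}_{E_{i-1}}$ and $\delta=\mathrm{id}_{Q_i}$. I expect the main obstacle to be precisely this bookkeeping: one must track the degree matrices of $u$, $w$, $\nu$ as they are fabricated inside the enlarged liftable diagrams and verify that the Maslov-nonnegativity lower bound and the phase-defect estimate from condition (6) interact with the degree-zero loop constraint so as to leave no room for a nonzero summand --- in effect an $S^1$-sensitive refinement of Bridgeland's phase-ordering argument for uniqueness of Harder--Narasimhan filtrations.
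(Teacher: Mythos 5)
Your reduction is sound as far as it goes: passing to $\chi=\delta-\mathrm{id}$ and $\psi=\alpha-\mathrm{id}$, observing that every summand of $\chi$ is homogeneous of degree $0$ by Remark \ref{Remarks under the definition of stability conditions}.(iv), and factoring $\chi=w\circ t_i$ inside a liftable enlargement via Corollary \ref{Fill-in corollary} is exactly how the paper's proof begins. The gap is in the step you yourself flag as the heart of the argument. You claim that a nonzero component $\chi_{ba}$ yields a two-step connecting path $Q_{i,a}\to P\to Q_{i,b}$ through a summand $P$ of $E_{i-1}[1]$ whose forced total degree $0$ is incompatible with the Maslov bound and conditions (3), (6). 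But $P$ is a summand of $E_{i-1}[1]$, which is not semistable, so condition (6) says nothing about the step $P\to Q_{i,b}$; and if one actually computes the two degrees inside the connective liftable Harder--Narasimhan diagram, they cancel exactly. Writing $d\geq 1$ for the common degree of a connecting path $A\dashrightarrow A[1]$ in that diagram (this is where Maslov nonnegativity enters) and $l_3\colon E_{i-1,c}\dashrightarrow Q_{i,a}$ for a path in the diagram, the $t_i$-summand $Q_{i,a}\to E_{i-1,c}[1]$ has degree $d-q(l_3)$, while Corollary \ref{Fill-in corollary} assigns the $w$-summand $E_{i-1,c}[1]\to Q_{i,b}$ the degree $q(l_3)-d$ of an already existing path. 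The sum is $0$, which is precisely consistent with the loop constraint, so no contradiction arises at this level: the degree obstruction does not live where you have placed it, and your "pinned strictly above" estimate is false in general.

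The paper's proof circumvents this by descending through the Harder--Narasimhan filtration of $E_{i-1}$. One shows that $w$ (the paper's $\delta_i$) cannot factor through $p_{i-1}[1]\colon E_{i-1}[1]\to Q_{i-1}[1]$: a nonzero homogeneous component $\epsilon_{11}\colon Q_{i-1,1}[1]\to Q_{i,1}$ would have degree $q(l_1)+q(l_2)<0$, and here one finally is between semistable objects, so condition (6) applies; the strict negativity comes from combining $c_i<0$ (last bullet of Definition \ref{stability condition on cyclic categories}.(7)) with the bound $d\geq 1$ obtained from Maslov nonnegativity applied to the last triangle of the filtration. Non-factorization forces $w\circ f_{i-1}[1]\neq 0$, and iterating the same argument down the filtration yields $w\circ f_{i-1}[1]\circ\cdots\circ f_1[1]\neq 0$, contradicting $f_1=0$. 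Your proposal is missing this induction entirely, and the same omission affects your treatment of $\psi=\nu\circ f_i$, since the target $E_{i-1}$ is again not semistable. To repair the argument you would need to replace the direct two-step degree estimate by this descent (or an equivalent mechanism that reaches a genuinely semistable source and target before invoking condition (6)).
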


\begin{proof}

	Firstly, we claim that: there is a connecting path $l:E_{n-1,1}\dashrightarrow E_{n-1,1}[1]$  in the following distinguished triangle $$E_{n-1}\xrightarrow{f_n} E_n\xrightarrow{p_n} Q_n\xrightarrow{t_n}E_{n-1}[1],$$ where  $E_{n-1,1}$ is an indecomposable summand of $E_{n-1}$.
	
	Let $E_{n-1,1}$ be an arbitrary indecomposable summand of $E_{n-1}$, if we assume that $p[1]\circ t_n=0$ where $p:E_{n-1}\rightarrow E_{n-1,1}$ is the natural projection map.  We have the following diagram $$\begin{tikzcd}
		E_{n-1}\arrow{r}{f_n} \arrow{d}{p} & E_n \arrow{r}{p_n}\arrow{d}{f} & Q_n\arrow{r}{t_n}\arrow{d}{id} & E_{n-1}[1] \arrow{d}{p[1]} \\ E_{n-1,1}\arrow{r} & Q_n\oplus E_{n-1,1}\arrow{r} & Q_n \arrow{r}{0} & E_{n-1,1}[1]
	\end{tikzcd}$$ If we can precompse the natural inclusion $i:E_{n-1,1} \rightarrow E_{n-1}$ and postcompose the natural projection $q:Q_n\oplus E_{n-1,1}\rightarrow E_{n-1,1}$ with the diagram, we can see that $E_{n-1,1}$ is a direct summand of $E_n$. As $E_n$ is indecomposable, we get $E_{n-1,1}\simeq E_n$. Then one can easily show that there is a morphism $i': E_n\rightarrow E_{n-1}$ such that $f_n\circ i'=id$, this implies that $p_n=0$, which contradicts the first condition in Definition \ref{stability condition on cyclic categories}.(7). Hence we showed that $p\circ t_n\neq 0$ for any indecomposable direct summand $E_{n-1,1}$ of $E_{n-1}$. 

This easily implies the claim. Indeed, by the connectivity condition in Definition \ref{stability condition on cyclic categories}.(7), there is an indecomposable summand $E_{n-1,1}$ with $f_n|_{E_{n-1,1}}\neq 0$, and any summand $p'$  of $p_n$ is nonzero. Hence we get a connecting loop $l:E_{n-1,1}\dashrightarrow E_{n-1,1}[1]$. By the assumption that all Maslov indices are nonnegative, we have $q(l)\geq 1$.

		We use $\textbf{D}$ to denote the  glued diagram of two Harder-Narasimhan diagrams.  As in Remark \ref{Remarks under the definition of stability conditions}.(iv), it is easy to see that every summand of $\delta$ is a homogeneous morphism of degree $0$. Hence, if we replace $\delta$ by $\delta-id$ in $\textbf{D}$, the new diagram is still liftable.

		By assumption, we get that $(id-\delta)\circ p_i=0$. Hence $id-\delta$ is in the image of the map $$Hom_{\mathcal{C}}(E_{i-1}[1],Q_i)\xrightarrow{\circ t_i} Hom_{\mathcal{C}}(Q_i,Q_i),$$ i.e. we can write $id-\delta=\delta_i\circ t_i$, where $\delta_i$ can be chosen to be a quasi-homogeneous morphism such that the following diagram is commutative and liftable by Corollary \ref{Fill-in corollary}. $$\begin{tikzcd} 
	\cdots\arrow{r} & Q_i \arrow{d}{id-\delta} \arrow{r}{t_i} & E_{i-1}[1] \arrow{ld}{\delta_i} \arrow{r} & \cdots \\  & Q_i & &
	\end{tikzcd}$$ where the first row is the Harder-Narasimhan diagram of $E$.

	We assume that $id-\delta\neq 0$, hence $t_i\neq 0, \delta_i\neq 0$. We have the following liftable commutative diagram $$\begin{tikzcd}
 E_{i-2}[1]\arrow{r}{f_{i-1}[1]}  & E_{i-1}[1] \arrow{d}{\delta_i} \arrow{r}{p_{i-1}[1]} & Q_{i-1}[1]  \\ &  Q_i & 
	\end{tikzcd}$$ 

Our second claim is that $\delta_i$ is not in the image of $$Hom_{\mathcal{C}}(Q_{i-1}[1],Q_i)\xrightarrow{\circ p_{i-1}[1]} Hom_{\mathcal{C}}(E_{i-1}[1],Q_i).$$ 

Suppose the contrary, i.e. there exists a nonzero morphism $\epsilon\in Hom_{\mathcal{C}}(Q_{i-1}[1],Q_i)$ with $\delta_i=\epsilon\circ p_{i-1}[1]$. By Corollary \ref{Fill-in corollary}, we can choose $\epsilon$ such that the following diagram is commutative and liftable.$$\begin{tikzcd}
 E_{i-2}[1]\arrow{r}{f_{i-1}[1]}  & E_{i-1}[1] \arrow{d}{\delta_i} \arrow{r}{p_{i-1}[1]} & Q_{i-1}[1] \arrow{ld}{\epsilon}  \\ &  Q_i & 
\end{tikzcd}$$

Let $Q_{i-1,1}[1], Q_{i,1}$ be indecomposable summands of $Q_{i-1}[1], Q_i$ respectively such that the homogeneous summand $\epsilon_{11}:Q_{i-1,1}[1]\rightarrow Q_{i,1}$ is nonzero. By the proof of Lemma \ref{Factorization lemma}, this implies that there is an indecomposable summand $E_{i-1,1}[1]$ with a connecting path $$l_1:Q_{i-1,1}[1]\dashrightarrow E_{i-1,1}[1]$$ in the liftable morphism $E_{i-1}[1]\xrightarrow{p_{i-1}[1]} Q_{i-1}[1]$, and a connecting path $$l_2: E_{i-1,1}[1]\dashrightarrow Q_{i,1}$$ in the Harder-Narasimhan diagram of $E$. 

By the connectivity condition in Definition \ref{stability condition on cyclic categories}.(7), one can find a connecting path $$l_3:E_{i-1,1}\dashrightarrow Q_{i,1}$$ in  the Harder-Narasimhan diagram of $E$. 

By the last condition of Definition  \ref{stability condition on cyclic categories}.(7), we know that \begin{equation}
	0>c_i=q(l_3\circ l_1[-1])=q(l_3)+q(l_1).
\end{equation}

Once again by the connectivity assumption, one can find a connecting path $$l_4:E_{i-1,1}\dashrightarrow E_{n,1}$$ in the Harder-Narasimhan diagram of $E$. Then we have the following  connecting loop.  $$\begin{tikzcd}
	E_{i-1,1}\arrow[r, dotted, "\bar{l}_2\circ l_3"] \arrow[d, dotted, "l_4"]& E_{i-1,1}[1]\arrow[d, dotted, "l_4{[1]}"]  \\ E_{n,1} \arrow[r,dotted, "l"]& E_{n,1}[1]
\end{tikzcd}$$
As the Harder-Narasimhan diagram of $E$ is liftable, we know that \begin{equation}
	-q(l_2)+q(l_3)=q(\bar{l}_2\circ l_3)=q(l)\geq 1>0.
\end{equation} The inequalities (1) and (2) implies that $$q(\epsilon_{11})=q(l_1)+q(l_2)<0, $$ this implies that $\epsilon_{11}=0$ by Definition \ref{stability condition on cyclic categories}, which contradicts the assumption $\epsilon_{11}\neq 0$. Hence our second claim is proved. Therefore, we get that $$\delta_i\circ f_{i-1}[1]\neq 0.$$ 

We use the same argument as in the proof of our second claim to do induction, in the end, we will get that $$\delta_i\circ f_{i-1}[1]\circ \cdots \circ f_2[1]\circ f_1[1]\neq 0, $$ which is impossible since $f_1=0$. Hence we proved that $\delta-id=0$.

The proof for $\alpha=id$ is similar.
\end{proof}

\begin{prop}\label{Nontrivial composition}
Let $\mathcal{C}$ be a $k$-linear Krull-Schmidt connective cyclic category,  $\sigma=(\mathcal{Q}, Z,\phi,q)$ be a stability condition on $\mathcal{C}$. Assume that all the Maslov indices of basic loops are nonnegative and let 	$$\begin{tikzcd}[column sep=large]
	& \cdots\arrow{d} & \cdots\arrow{d} & \cdots & \cdots \arrow{d}& \cdots\arrow{d}  \\ & E_0 \arrow{d}{f_1}&E_1\arrow{d}{f_2} & \cdots & E_{n-2} \arrow{d}{f_{n-1}} &  E_{n-1}\arrow{d}{f_n}\\ 
	0=E_0  \arrow{ru}{id} & E_1\arrow{d}{p_1} \arrow{ru}{id} & E_2\arrow{d}{p_2}\arrow{ru}{id} &\cdots  \arrow{ru}{id}& E_{n-1}\arrow{ru}{id} \arrow{d}{p_{n-1}} &E_n=E\arrow{d}{p_n}\\ & Q_1 \arrow{d}& Q_2\arrow{d}& \cdots & Q_{n-1} \arrow{d}& Q_n \arrow{d} \\ & \cdots & \cdots & \cdots & \cdots & \cdots
\end{tikzcd} $$ be a Harder-Narasimhan filtration of an indecomposable object $E$ as in condition (7) of Definition \ref{stability condition on cyclic categories}. 

Then the compositions $f_{j}\circ f_{j-1}\circ\cdots \circ f_i\neq 0$ for any $2\leq i<j\leq n$.	
\end{prop}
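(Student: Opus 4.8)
The plan is to argue by contradiction, extracting from a vanishing composition a nonzero homogeneous morphism between two non-adjacent Harder--Narasimhan factors and then contradicting the sign conditions in Definition~\ref{stability condition on cyclic categories}. So suppose $f_j\circ\cdots\circ f_i=0$ for some $2\le i<j\le n$, and fix such a pair with $j$ as small as possible and, for that $j$, with $i$ as large as possible. Put $b:=f_j\circ\cdots\circ f_{i+1}$, so that $b\circ f_i=0$. I first need $b\ne 0$: when $i+1<j$ this is the maximality of $i$, and when $i+1=j$ it amounts to $f_j\ne 0$, which I would establish as the base case of the same argument (it in fact yields $f_k\ne 0$ for all $2\le k\le n$).

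Since $E_{i-1}\xrightarrow{f_i}E_i\xrightarrow{p_i}Q_i\xrightarrow{t_i}E_{i-1}[1]$ is a distinguished triangle and $b\circ f_i=0$, I would factor $b=w\circ p_i$ with $w\colon Q_i\to E_j$; by Lemma~\ref{Factorization lemma} and Corollary~\ref{Fill-in corollary} I may choose $w$ quasi-homogeneous so that, together with each further morphism produced below, it can be adjoined to the (liftable, connective) extended Harder--Narasimhan diagram keeping it liftable and connective; call the result $\mathbf{D}$, and note $w\ne 0$. Now $p_j\circ w\circ p_i=p_j\circ b=0$. If $p_j\circ w\ne 0$ I obtain a nonzero quasi-homogeneous morphism $Q_i\to Q_j$. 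Otherwise $w$ factors through $f_j$, say $w=f_j\circ w'$ with $w'\colon Q_i\to E_{j-1}$ quasi-homogeneous and nonzero (again by Corollary~\ref{Fill-in corollary}), and I repeat the step with $w'$ in place of $w$. Iterating, I descend the filtration one step at a time, producing nonzero quasi-homogeneous morphisms $Q_i\to E_m$ for $m$ decreasing; the descent stops either (a) because $p_m\circ(\,\cdot\,)\ne 0$ for some $m>i$, yielding a nonzero quasi-homogeneous $Q_i\to Q_m$, or (b) because it passes through $f_i$ and rewrites the original $w$ as $(f_j\circ\cdots\circ f_i)\circ(\,\cdot\,)=0$, contradicting $w\ne 0$; there is also a third, delicate possibility discussed below.

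In case (a) I pick indecomposable summands giving a nonzero homogeneous component $\theta\colon Q_{i,s}\to Q_{m,t}$ with $i<m\le j$. On the one hand $Q_{i,s}$, $Q_{m,t}$ are indecomposable summands of the semistable objects $Q_i$, $Q_m$, so condition~(6) of Definition~\ref{stability condition on cyclic categories} gives $q(\theta)\ge 0$. On the other hand $\theta$ lies in the liftable connective diagram $\mathbf{D}$, so $q(\theta)$ equals the degree of any connecting path $Q_{i,s}\dashrightarrow Q_{m,t}$ in $\mathbf{D}$ (two such differ by a connecting loop, of degree $0$ by liftability); concatenating the simple connecting paths of degrees $c_{i+1},\dots,c_m$ supplied by the last clause of condition~(7) produces such a path, of degree $\sum_{\ell=i+1}^{m}c_\ell<0$. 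Hence $0\le q(\theta)=\sum_{\ell=i+1}^{m}c_\ell<0$, a contradiction.

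The step I expect to be the real obstacle is the ``bottoming out'' of the descent: it may happen that at every stage $p_m\circ(\,\cdot\,)$ vanishes for $m>i$ but the morphism $v\colon Q_i\to E_i$ finally produced does \emph{not} factor through $f_i$ — equivalently, $\nu:=p_i\circ v$ is a nonzero (necessarily degree-zero, by Remark~\ref{Remarks under the definition of stability conditions}(iv)) endomorphism of $Q_i$, which is exactly the situation $t_i=0$, $E_i\cong E_{i-1}\oplus Q_i$ — so that neither (a) nor (b) applies. In this case one has the relation $w=b\circ v=w\circ\nu$, hence $w\circ(\mathrm{id}_{Q_i}-\nu)=0$, and the triple $(\mathrm{id}_{E_{i-1}},\,\mathrm{id}_{E_i}-v\circ p_i,\,\mathrm{id}_{Q_i}-\nu)$ is a liftable self-morphism of the $i$-th triangle; since $w\ne 0$, $\mathrm{id}_{Q_i}-\nu$ is not invertible. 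Here I would invoke a rigidity argument in the spirit of Lemma~\ref{important lemma} (analyzing this self-morphism, possibly after decomposing $E_i$ and $Q_i$ via Krull--Schmidt and using $f_i\ne 0$) to force $w=0$, the desired contradiction; this is the part I expect to require the most care. The other recurring burden — which also underlies the base-case claim $f_k\ne 0$ — is the bookkeeping that keeps every morphism created during the descent inside a common liftable sub-diagram, so that all the ``two connecting paths have equal degree'' comparisons above are legitimate; the nonnegativity of all Maslov indices is what controls the relevant connecting loops throughout.
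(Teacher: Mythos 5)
Your overall strategy is the paper's: argue by contradiction, use $b\circ f_i=0$ to factor $b$ through $Q_i$ via $p_i$, descend through the filtration, and at each stage kill the component landing in $Q_m$ by playing condition (6) of Definition \ref{stability condition on cyclic categories} against the negative constants $c_\ell$ from condition (7) (your case (a) is exactly how the paper disposes of $p_m\circ h_m$). The genuine gap is precisely the ``bottoming out'' case you flag, and it is not a minor loose end: the relation you arrive at, $w\circ(\mathrm{id}_{Q_i}-\nu)=0$ with $\nu=p_i\circ v$, only says that $\mathrm{id}_{Q_i}-\nu$ is not invertible, and since $w$ is an auxiliary morphism rather than a map in one of the distinguished triangles of the Harder--Narasimhan diagram, there is no cofiber sequence through which you can push $\mathrm{id}_{Q_i}-\nu$ to run a Lemma \ref{important lemma}-type descent. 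The reason you end up with this weak relation is that your bookkeeping only tracks the composite identity $w=b\circ v$ and skips the intermediate compatibilities.

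The paper closes this by maintaining a stronger invariant at every level of the descent: the factored morphism $h_m\colon Q_i\to E_m$ is shown to satisfy $h_m\circ p_i=f_m\circ\cdots\circ f_{i+1}$ (not merely $f_j\circ\cdots\circ f_{m+1}\circ h_m=w$). Each such commutativity is itself proved by a rigidity argument: the discrepancy $\delta=h_m\circ p_i-f_m\circ\cdots\circ f_{i+1}$ is killed by $f_{m+1}$ on the left, hence factors through $Q_{m+1}[-1]\to E_m$, and the argument of Lemma \ref{important lemma} (pushing the factoring morphism down the filtration until it hits $f_1=0$, using the degree inequalities) forces $\delta=0$. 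With this invariant in hand, the terminal relation is $f_{i+1}\circ(h_i\circ p_i-\mathrm{id}_{E_i})=0$, i.e. an endomorphism of $E_i$ killed by the single triangle map $f_{i+1}$; the same rigidity argument then yields $h_i\circ p_i=\mathrm{id}_{E_i}$, so $p_i$ is a split monomorphism, $E_i$ is a direct summand of $Q_i$, and $f_i=h_i\circ p_i\circ f_i=0$, contradicting the connectivity requirement in condition (7). So the missing idea is to work with the endomorphism $h_i\circ p_i-\mathrm{id}_{E_i}$ of $E_i$ (annihilated by a triangle map) rather than the endomorphism $\mathrm{id}_{Q_i}-\nu$ of $Q_i$ (annihilated only by $w$), and reaching that relation requires the level-by-level commutativity checks you omitted.
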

\begin{proof}
	Without loss of generality, we can let $j=n$. By induction, we can assume that $f_{n}\circ f_{n-1}\circ\cdots \circ f_{i+1}\neq 0$, and it suffices to prove that $f_{n}\circ f_{n-1}\circ\cdots \circ f_i\neq 0$. Assume the contrary, i.e. the composition $f_{n}\circ f_{n-1}\circ\cdots \circ f_i$ is trivial.
	
	Since $f_{n}\circ f_{n-1}\circ\cdots \circ f_i=0$, we have the following liftable commutative diagram. 
	$$\begin{tikzcd}[column sep=large]
	& \cdots\arrow{d}  & \cdots & \cdots \arrow{d}& \cdots\arrow{d} \\	& E_{i-1} \arrow{d}{f_{i}}& \cdots & E_{n-2} \arrow{d}{f_{n-1}} &  E_{n-1}\arrow{d}{f_n}\\ 
		E_{i-1} \arrow{ru}{id} & E_i\arrow{d}{p_i} \arrow{ru}{id} &\cdots  \arrow{ru}{id}& E_{n-1}\arrow{ru}{id}  &E_n=E\arrow{d}{p_n}\\ & Q_i \arrow{d}\arrow{rrru}[swap]{h_n} & \cdots & \cdots  & Q_n \arrow{d} \\ & \cdots  & \cdots & \cdots & \cdots
	\end{tikzcd} $$
	
 One can easily see that $p_n\circ h_n=0$ by Definition \ref{stability condition on cyclic categories}. Hence $h_n$ factors through $E_{n-1}$,  we have following liftable diagram 	$$\begin{tikzcd}[column sep=large]
 	& \cdots\arrow{d}  & \cdots & \cdots \arrow{d}& \cdots\arrow{d} \\& E_{i-1} \arrow{d}{f_{i}}& \cdots & E_{n-2} \arrow{d}{f_{n-1}} &  E_{n-1}\arrow{d}{f_n}\\ 
 	E_{i-1} \arrow{ru}{id} & E_i\arrow{d}{p_i} \arrow{ru}{id} &\cdots  \arrow{ru}{id}& E_{n-1}\arrow{ru}{id}  &E_n=E\arrow{d}{p_n}\\ & Q_i \arrow{rrru}[swap]{h_n} \arrow{rru}{h_{n-1}}\arrow{d}& \cdots & \cdots  & Q_n\arrow{d} \\ & \cdots  & \cdots & \cdots & \cdots
 \end{tikzcd} $$
 
 We claim that this diagram  is commutative. It suffices to prove that $$h_{n-1}\circ p_i=f_{n-1}\circ\cdots \circ f_{i+1}.$$ Denote the difference $h_{n-1}\circ p_i-f_{n-1}\circ\cdots \circ f_{i+1}$ by $\delta$, we have $f_n\circ \delta=0$. Hence we have the following liftable commutative diagram $$\begin{tikzcd}
\cdots \arrow{r} &E_{{i-1}}\arrow{r}{f_i} & E_i \arrow{d}{\delta}\arrow{ld}{\epsilon}\arrow{r}{p_i} & Q_i \arrow{r} & \cdots \\  \cdots \arrow{r} & Q_n[-1]\arrow{r}{t} & E_{n-1} \arrow{r}{f_n} & E_n \arrow{r} & \cdots 
\end{tikzcd}$$ where both rows are in the Harder-Narasimhan diagram of $E$.	 By the same argument in the proof of Lemma \ref{important lemma}, we can show that $\delta=0$. Hence, the claim is proved.

 Continuing the same argument, we get two liftable commutative diagrams $$\begin{tikzcd}[column sep=large]
 	 	& \cdots\arrow{d}  & \cdots \arrow{d}& \cdots \arrow{d}& \cdots\arrow{d} \\ & E_{i-1} \arrow{d}{f_{i}}& E_i \arrow{d}{f_{i+1}} & \cdots &  E_{n-1}\arrow{d}{f_n}\\ 
 	E_{i-1} \arrow{ru}{id} & E_i\arrow{d}{p_i} \arrow{ru}{id} & E_{i+1} & \cdots  \arrow{ru}{id}& E_n=E\arrow{d}{p_n}\\ & Q_i \arrow{d} \arrow{ru}{h_{i+1}} \arrow{rrru}[swap]{h_n} & \cdots & \cdots  & Q_n\arrow{d} \\ & \cdots  & \cdots & \cdots & \cdots
 \end{tikzcd} $$ and 
$$\begin{tikzcd}[column sep=large]
& \cdots\arrow{d}  & \cdots \arrow{d}& \cdots \arrow{d}& \cdots\arrow{d} \\ 	& E_{i-1} \arrow{d}{f_{i}}& E_i \arrow{d}{f_{i+1}} & \cdots &  E_{n-1}\arrow{d}{f_n}\\ 
	E_{i-1} \arrow{ru}{id} & E_i \arrow{ru}{id} & E_{i+1} & \cdots  \arrow{ru}{id}& E_n=E\arrow{d}{p_n}\\ & Q_i \arrow{d}\arrow{ru}{h_{i+1}} \arrow{rrru}[swap]{h_n} \arrow{u}{h_i}& \cdots & \cdots  & Q_n\arrow{d} \\ & \cdots  & \cdots & \cdots & \cdots
\end{tikzcd} $$
Hence we get $f_{i+1}\circ (h_i\circ p_i-id)=0$. By the same argument in the proof of Lemma \ref{important lemma}, one can show that  $h_i\circ p_i=id$. Hence $E_i$ is a direct summand of $Q_i$, which implies that $f_i=0$. This contradicts the connectivity condition. Hence, the proposition is proved.
\end{proof}

\begin{theorem}\label{uniqueness of Hn filtrations}
	Let $\mathcal{C}$ be a $k$-linear Krull-Schmidt connective cyclic category,  $\sigma=(\mathcal{Q}, Z,\phi,q)$ be a stability condition on $\mathcal{C}$. Assume that all the Maslov indices of basic loops are nonnegative.
	
	Then the Harder-Narasimhan filtration of any indecomposable object $E$ is unique up an isomorphism.
\end{theorem}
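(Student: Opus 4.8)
The plan is to follow the classical strategy for uniqueness of Harder--Narasimhan filtrations, adapted to the cyclic setting, where the two technical inputs are Lemma~\ref{important lemma} (rigidity of the HN diagram) and Proposition~\ref{Nontrivial composition} (non-vanishing of the composites $f_j\circ\cdots\circ f_i$). Suppose $0=E_0\to E_1\to\cdots\to E_n=E$, with factors $Q_i\in\mathcal{Q}(\phi_i)$ and jump data $c_i<0$, and $0=E_0'\to\cdots\to E_m'=E$, with factors $Q_j'\in\mathcal{Q}(\psi_j)$ and jumps $c_j'<0$, are two HN filtrations of the indecomposable object $E$. I want to produce a compatible isomorphism between the two diagrams; in particular $n=m$, $Q_i\simeq Q_i'$ and $E_i\simeq E_i'$ for all $i$, which is exactly uniqueness up to isomorphism.

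First I would show that the two filtrations have the same length and that $(\phi_i),(c_i)$ match $(\psi_j),(c_j')$. Working from the top, consider the composite $E_{m-1}'\xrightarrow{f_m'}E\xrightarrow{p_n}Q_n$. The heart of the argument is that this morphism vanishes: a nonzero homogeneous summand of it would, together with the connecting-path structure inside the two HN diagrams, produce a connecting loop whose degree is forced to be simultaneously $<0$ (via the strict inequalities $c_i<0$, $c_j'<0$ of Definition~\ref{stability condition on cyclic categories}(7)) and $\geq 1$ (via the assumption that all Maslov indices are non-negative, i.e. $q$ of a basic loop is $\geq 1$) --- a contradiction. This is precisely the bookkeeping already carried out in the proof of Lemma~\ref{important lemma} (inequalities (1) and (2) there). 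Hence $f_m'$ factors through $E_{n-1}=\mathrm{fib}(p_n)$; symmetrically $f_n$ factors through $E_{m-1}'$; running this downward, and invoking Proposition~\ref{Nontrivial composition} to keep the successive factorizations from degenerating, forces $n=m$ and, stage by stage, identifies the phases and jumps, together with a first pair of mutually inverse morphisms between $E_{n-1}$ and $E_{m-1}'$.

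Next I would build a morphism of the whole filtered diagrams. Starting from $\mathrm{id}_E$ and using Corollary~\ref{Fill-in corollary} together with the pre-liftability axioms of Definition~\ref{preliftability}, fill in quasi-homogeneous morphisms $g_i\colon E_i\to E_i'$ with $g_n=\mathrm{id}_E$, and the induced $h_i\colon Q_i\to Q_i'$, so that the combined diagram stays liftable and commutative; by symmetry one also gets $g_i'\colon E_i'\to E_i$. The composites $g_i'\circ g_i\colon E_i\to E_i$, with $g_n'\circ g_n=\mathrm{id}_E$, assemble into a liftable endomorphism of the HN diagram of $E$ restricted to each stage, and the glued diagram is liftable by construction; applying Lemma~\ref{important lemma} downward in $i$ then gives $g_i'\circ g_i=\mathrm{id}$ and that the induced endomorphism of each $Q_i$ is the identity, and likewise $g_i\circ g_i'=\mathrm{id}$. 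Therefore each $g_i$ is an isomorphism compatible with the filtration morphisms and induces an isomorphism $Q_i\simeq Q_i'$, completing the proof.

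I expect the main obstacle to be the vanishing statement in the first step: it is the cyclic-category replacement for Bridgeland's vanishing $\mathrm{Hom}(\mathcal{P}(\phi),\mathcal{P}(<\phi))=0$, but since phases now live on a circle there is no ambient linear order to appeal to, so one must instead manufacture the right connecting loops and control their degrees using connectivity, the $c_i<0$ hypotheses and Maslov non-negativity all at once. A secondary subtlety is arranging the fill-in morphisms of the second step so that the glued diagram of the third step genuinely satisfies the liftability hypothesis of Lemma~\ref{important lemma}; this should follow from Corollary~\ref{Fill-in corollary} and Lemma~\ref{Composition lemma}, but needs to be checked with some care.
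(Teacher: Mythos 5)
Your plan follows the paper's proof essentially step for step: both reduce uniqueness to the vanishing of the cross-composites $p_n\circ f_m'$ and $p_m'\circ f_n$ via the degree/Maslov bookkeeping from the proof of Lemma \ref{important lemma}, use Proposition \ref{Nontrivial composition} to rule out degenerate factorizations, and then apply Lemma \ref{important lemma} again to show that the resulting back-and-forth maps between the two filtrations compose to identities, inducting down the filtration. The only real difference is one of ordering: rather than attacking $p_n\circ f_m'$ head-on with a loop-degree contradiction (where the source $E_{m-1}'$ need not be semistable), the paper first pins down $\phi_n=\phi_m'$ by a trichotomy argument that reduces the degree contradiction to a morphism $t\colon Q_n\to Q_m'$ between semistable objects, where condition (6) of Definition \ref{stability condition on cyclic categories} applies directly.
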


\begin{proof}

	Let the following liftable diagram
		$$\begin{tikzcd}[column sep=large]
		& \cdots\arrow{d} & \cdots\arrow{d} & \cdots & \cdots \arrow{d}& \cdots\arrow{d}  \\ & E_0 \arrow{d}{f_1}&E_1\arrow{d}{f_2} & \cdots & E_{n-2} \arrow{d}{f_{n-1}} &  E_{n-1}\arrow{d}{f_n}\\ 
		0=E_0  \arrow{ru}{id} & E_1\arrow{d}{p_1} \arrow{ru}{id} & E_2\arrow{d}{p_2}\arrow{ru}{id} &\cdots  \arrow{ru}{id}& E_{n-1}\arrow{ru}{id} \arrow{d}{p_{n-1}} &E_n=E\arrow{d}{p_n}\\ & Q_1 \arrow{d}& Q_2\arrow{d}& \cdots & Q_{n-1} \arrow{d}& Q_n \arrow{d} \\ & \cdots & \cdots & \cdots & \cdots & \cdots
	\end{tikzcd} $$ 
be a Harder-Narasimhan diagram of an indecomposable object $E$. By Definition \ref{stability condition on cyclic categories}, we know that for any indecomposable summand $Q_{i,1}$ in $Q_i$, there is a connecting path $l:Q_{i,1}\dashrightarrow E$ in the diagram. By Definition \ref{stability condition on cyclic categories}.(7), $q(l)$ is independent of the path, only depends on the integer $i$. Hence, we can denote this degree by $\phi_i$. Moreover, we also have $$\phi_1<\phi_2<\cdots <\phi_n$$ by definition.

Suppose we have another liftable diagram 	$$\begin{tikzcd}[column sep=large]
	& \cdots\arrow{d} & \cdots\arrow{d} & \cdots & \cdots \arrow{d}& \cdots\arrow{d}  \\ & E_0 \arrow{d}{f_1'}&E_1'\arrow{d}{f_2'} & \cdots & E_{m-2}' \arrow{d}{f_{m-1}'} &  E_{m-1}'\arrow{d}{f_m'}\\ 
	0=E_0  \arrow{ru}{id} & E_1'\arrow{d}{p_1'} \arrow{ru}{id} & E_2'\arrow{d}{p_2'}\arrow{ru}{id} &\cdots  \arrow{ru}{id}& E_{m-1}'\arrow{ru}{id} \arrow{d}{p_{m-1}'} &E_m=E\arrow{d}{p_m'}\\ & Q_1' \arrow{d}& Q_2'\arrow{d}& \cdots & Q_{m-1}' \arrow{d}& Q_m' \arrow{d} \\ & \cdots & \cdots & \cdots & \cdots & \cdots
\end{tikzcd} $$  which is another Harder-Narasimhan diagram of an indecomposable object $E$. Similarly we have a increasing sequence of real numbers $$\phi_1'>\phi_2'>\cdots >\phi_m',$$ where $\phi_i'$ denotes the degree of a connecting path from an indecomposable summand $Q_{i,1}'$ in $Q_i'$ to $E$ in this diagram.

As $E$ is an indecomposable object, we have following liftable diagram. $$\begin{tikzcd}
	\cdots \arrow{r} & E_{n-1}\arrow{r}{f_n} & E\arrow{d}{id} \arrow{r}{p_n} &Q_n \arrow{r} & \cdots  \\	\cdots \arrow{r} & E_{m-1}'\arrow{r}{f_m} & E \arrow{r}{p_m'} &Q_m' \arrow{r} & \cdots  
\end{tikzcd}$$ where the first row is in the first Harder-Narasimhan diagram, and the second row is in the second Harder-Narasimhan diagram.

We claim that $\phi_n=\phi_m'$. To prove the claim, let us assume that $\phi_n<\phi_m'$ first.  Under such assumption, one can show that $p_m'\circ f_n=0$. Indeed, if $p_m'\circ f_n\neq 0$, by the proof in Proposition \ref{Nontrivial composition} and the assumption that $\phi_n<\phi_m'$, we conclude that $$p_m'\circ f_n\circ f_{n-1}\circ \cdots \circ f_1\neq 0,$$ which is a contradiction. Hence we get $p_m'\circ f_n=0$. Therefore, we have a liftable commutative diagram $$\begin{tikzcd}
	\cdots \arrow{r} & E_{n-1}\arrow{r}{f_n} & E\arrow{d}{id} \arrow{r}{p_n} &Q_n \arrow{d}{t} \arrow{r} & \cdots  \\	\cdots \arrow{r} & E_{m-1}'\arrow{r}{f_m} & E \arrow{r}{p_m'} &Q_m' \arrow{r} & \cdots  
\end{tikzcd}$$

However, by the assumption $\phi_n<\phi_m'$, we know that any homogeneous summand of $t$ is of negative degree, hence $t=0$, which implies $p_m'=0$. This contradicts the connectivity condition.  Therefore, we proved $\phi_n\geq \phi_m'$. And by a symmetric argument, we get $\phi_n=\phi_m'$. This proves the claim.

In the case $\phi_n=\phi_m'$, the proof of Lemma \ref{important lemma} could show that $p_n\circ f_m=0$ and $p_m'\circ f_n=0$. Hence we have the following liftable and commutative diagram.

$$\begin{tikzcd}
	\cdots \arrow{r} & E_{n-1}\arrow{r}{f_n} \arrow{d}{\alpha} & E\arrow{d}{id} \arrow{r}{p_n} &Q_n \arrow{d}{t} \arrow{r} & \cdots  \\	\cdots \arrow{r} & E_{m-1}'\arrow{r}{f_m} \arrow{d}{\alpha'}& E \arrow{r}{p_m'} \arrow{d}{id}&Q_m' \arrow{r} \arrow{d}{t'}& \cdots  \\	\cdots \arrow{r} & E_{n-1}\arrow{r}{f_n} \arrow{d}{\alpha} & E\arrow{d}{id} \arrow{r}{p_n} &Q_n \arrow{d}{t} \arrow{r} & \cdots  \\ \cdots \arrow{r} & E_{m-1}' \arrow{r}{f_m} &E \arrow{r}{p_m'} & Q_m' \arrow{r} & \cdots
\end{tikzcd}$$
By Lemma \ref{important lemma}, we get that $Q_n\simeq Q_m'$ and $E_{n-1}\simeq E_{m-1}'$. 

Inductively using the similar argument, one can conclude that $n=m$, $\phi_i=\phi_i'$, $E_i\simeq E_i'$, $Q_i\simeq Q_i'$, and the last two class of isomorphisms are compatible. Hence these two Harder-Narasimhan filtrations are isomorphic.

\end{proof}

This theorem leads to the following natural definition of stability conditions.

\begin{definition}
A stability condition $\sigma$ is called a good stability condition if the Harder-Narasimhan filtration of any indecomposable object is unique. We denote such set by $Stab_{cyc}^u(\mathcal{C})$. 
\end{definition}
\begin{remark}
	By Theorem \ref{uniqueness of Hn filtrations}, we know that $Stab_{cyc}^u(\mathcal{C})$ differs from $Stab_{cyc}(\mathcal{C})$ only on the locus where there are some basic loops with negative Maslov indices.
	
	Also note that, although the non-negativity pf Maslov indices is a sufficient condition for the uniqueness of Harder-Narasimhan filtration, it is not a necessary condition. 
\end{remark}
As mentioned in Remark \ref{Remarks under the definition of stability conditions}.(iii), the space of good stability condition as in Definition \ref{stability condition on cyclic categories} may contain more information than we need. Therefore, we have following equivalence relation between good stability conditions. 
\begin{definition}\label{equivalent relations}
	Let $\sigma_1=(\mathcal{Q}_1, Z_1, \phi_1, q_1)$ and $\sigma_2=(\mathcal{Q}_2, Z_2, \phi_2, q_2)$ be two good stability conditions on $\mathcal{C}$, we say that $\sigma_1$ is equivalent to $\sigma_2$ if the following conditions are satisfied:
	
	\begin{enumerate}
		\item The circle slicings and central charges are the same, i.e. $\mathcal{Q}_1=\mathcal{Q}_2$ and $Z_1=Z_2$.
		\item A morphism $f$ is homogeneous with respect to $\sigma_1$ if and only if it is homogeneous with respect to $\sigma_2$.
		\item For any indecomposable objects $E$, the HN filtration of $E$ with respect to $\sigma_1$  is isomorphic to the HN filtration of $E$ with respect to $\sigma_2$.
\item For any connecting path $l$ from a semi-stable object to another semi-stable object, we have $q_1(l)=q_2(l)$.
	\end{enumerate}

\end{definition}

\begin{remark}
		It is obviously an equivalence relation. We use $\sigma_1\simeq \sigma_2$ to denote such a relation.
	
	We denote the equivalent classes of good stability conditions by $Stab_{cyc}^{u,e}(\mathcal{C})$. We have the following diagram. $$Stab_{cyc}^{u,e}(\mathcal{C})\twoheadleftarrow Stab_{cyc}^{u}(\mathcal{C})\hookrightarrow Stab_{cyc}(\mathcal{C}).$$

\end{remark}
The following two results are the main reasons of introducing this equivalence relation. The notations are the same as in Proposition \ref{connections of Z-lifts} and Remarks \ref{Remarks below Z-lifts}.

\begin{prop}\label{equivalence proposition}
	Let $\mathcal{C}$ be a $k$-linear Krull-Schmidt connective cyclic category, and $\sigma_1,\sigma_2$ be two equivalent good stability conditions on $\mathcal{C}$. Then the charge triples $R_1,R_2$ of $\sigma_1$ and $\sigma_2$ are deformation equivalent. In particular, the Maslov indices of a basic loop with respect to $\sigma_1,\sigma_2$ are the same. 
	
	Moreover, if $\mathcal{C}$ is liftable with respect to $\sigma_1$ (hence also liftable with respect to $\sigma_2$). Then there exists a canonical equivalence $H:\widetilde{\mathcal{C}}_1\rightarrow\widetilde{\mathcal{C}}_2$ between two $\mathbb{Z}$-coverings of $\mathcal{C}$, such that $$H^*\pi_2^*\sigma_2=\pi_1^*\sigma_1,$$ where $\pi_1,\pi_2$ are projections in the following commutative diagram. $$\begin{tikzcd}
		\widetilde{\mathcal{C}}_1 \arrow{rd}[swap]{\pi_1} \arrow{rr}{H}& & \widetilde{\mathcal{C}}_2 \arrow {ld}{\pi_2} \\ & \mathcal{C} & 
	\end{tikzcd}$$ 
\end{prop}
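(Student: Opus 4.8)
The plan is to build the statement on Definition \ref{equivalent relations}, the additivity of degrees along connecting paths, and Proposition \ref{connections of Z-lifts} (together with Remarks \ref{Remarks below Z-lifts}), pinning down the equivalence afterwards by a careful choice of base point. The first step is to show that $R_1$ and $R_2$ are deformation equivalent. The homogeneity clause of Definition \ref{Definition of liftable and deformation equivalence}(2), namely that a connecting path is homogeneous for $R_1$ if and only if it is homogeneous for $R_2$, is exactly condition (2) of Definition \ref{equivalent relations}. For the numerical clause, let $l_1,l_2:E\dashrightarrow F$ be homogeneous connecting paths; the idea is to reduce to connecting paths between semistable objects, where condition (4) of Definition \ref{equivalent relations} applies. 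Choose indecomposable semistable objects $S_1,S_2$ (they exist, since any indecomposable summand of a Harder--Narasimhan factor of any object is semistable) and, using connectivity of $\mathcal{C}$, connecting paths $a:S_1\dashrightarrow E$ and $b:F\dashrightarrow S_2$. For $j=1,2$, the concatenated path $S_1\dashrightarrow E\dashrightarrow F\dashrightarrow S_2$ obtained from $a$, $l_j$, $b$ runs from the semistable object $S_1$ to the semistable object $S_2$, so Definition \ref{equivalent relations}(4) gives $q_1(a)+q_1(l_j)+q_1(b)=q_2(a)+q_2(l_j)+q_2(b)$; subtracting the $j=2$ identity from the $j=1$ identity cancels the contributions of $a$ and $b$ and yields $q_1(l_1)-q_1(l_2)=q_2(l_1)-q_2(l_2)$. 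Applying this with $l_1$ a basic loop $\beta_A\circ f[1]\circ f$ (Definition \ref{Fundamental distinguished hexagon}) based at an indecomposable object $A$ and $l_2$ the trivial path at $A$ gives $q_1(\beta_A\circ f[1]\circ f)=q_2(\beta_A\circ f[1]\circ f)$; since $q_i(\beta_A)=0$ and $q_i(f[1])=q_i(f)$ by Definition \ref{real decomposition}, this reads $2q_1(f)=2q_2(f)$, so the two Maslov indices $\frac{q_i(f)-1}{2}$ of that basic loop coincide.

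Next I would produce the compatible equivalence. Since $R_1$ and $R_2$ are deformation equivalent and $\mathcal{C}$ is liftable with respect to $R_1$, Proposition \ref{connections of Z-lifts} shows $\mathcal{C}$ is also liftable with respect to $R_2$ and yields an exact equivalence $H_0:\widetilde{\mathcal{C}}_1\simeq\widetilde{\mathcal{C}}_2$ with $\pi_2\circ H_0=\pi_1$ (Remarks \ref{Remarks below Z-lifts}). The construction of $H_0$ in that proof depends on a choice of indecomposable base object, and I would take the base to be an indecomposable semistable object $S$. With that choice, for every semistable indecomposable $F$ and every connecting path $l:S\dashrightarrow F$ one has $\phi_1(S)=\phi_2(S)$ (both equal the common phase of $S$ in $\mathcal{Q}_1=\mathcal{Q}_2$) and $q_1(l)=q_2(l)$ (Definition \ref{equivalent relations}(4)), so $H_0$ acts as the identity on every lift of every semistable object. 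Writing $\mathcal{P}_i$ and $\widetilde{Z}_i$ for the slicing and central charge of $\pi_i^*\sigma_i$, this forces $H_0^*\mathcal{P}_2=\mathcal{P}_1$ (a slicing is determined by its semistable subcategories, and $\mathcal{Q}_1=\mathcal{Q}_2$) and $H_0^*\widetilde{Z}_2=\widetilde{Z}_1$ (both are $Z_1=Z_2$ composed with the induced map $K_0(\widetilde{\mathcal{C}}_1)\to K_0(\mathcal{C})$, using $\pi_2\circ H_0=\pi_1$). Hence $H:=H_0$ fits into the stated triangle and satisfies $H^*\pi_2^*\sigma_2=\pi_1^*\sigma_1$.

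Finally I would verify that $H$ is canonical. If $H$ and $H'$ both satisfy the conclusion, then $G:=H^{-1}\circ H'$ is an exact autoequivalence of $\widetilde{\mathcal{C}}_1$ with $\pi_1\circ G=\pi_1$; since $\mathcal{C}$ is connective, any such deck transformation is naturally isomorphic to $[2k]$ for a unique $k\in\mathbb{Z}$. From $H^*\pi_2^*\sigma_2=H'^*\pi_2^*\sigma_2=\pi_1^*\sigma_1$ one gets $G^*(\pi_1^*\sigma_1)=\pi_1^*\sigma_1$, and since $[2k]^*$ shifts every phase of the Bridgeland stability condition $\pi_1^*\sigma_1$ by $2k$, this forces $k=0$ and $H\cong H'$. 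I expect the one genuinely substantive point to be the padding reduction in the first paragraph: the numerical part of deformation equivalence refers to all parallel connecting paths, yet it is already forced by the single clause of Definition \ref{equivalent relations} about paths between semistable objects, because connectivity lets one extend any parallel pair by a common head and tail with semistable endpoints, and that extension cancels in the degree difference. The only hypothesis to keep an eye on there is that indecomposable semistable objects exist and are reachable by connecting paths, which follows from the existence of Harder--Narasimhan filtrations and connectivity.
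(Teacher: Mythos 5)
Your proposal is correct and takes essentially the same route as the paper: the numerical clause of deformation equivalence is obtained by padding a pair of parallel connecting paths with common head and tail ending at indecomposable semistable objects (the paper takes these from the Harder--Narasimhan diagrams of $E$ and $F$) and invoking Definition \ref{equivalent relations}(4), and the equivalence $H$ is the one from Proposition \ref{connections of Z-lifts} based at a semistable indecomposable object, which then pulls back the lifted stability conditions correctly. Your added verifications (the explicit Maslov-index comparison and the deck-transformation argument for canonicity) are consistent elaborations of what the paper leaves implicit.
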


\begin{proof}
To prove that $R_1,R_2$ are deformation equivalent, let $l$ be a connecting path from $E$ to $F$, and consider the Harder-Narasimhan filtrations of $E$ and $F$ respectively. There are connecting paths $l_1:Q_{1,1}\dashrightarrow E$ and $l_2:F\dashrightarrow Q_{m,1}'$, where $Q_{1,1}$ is an indecomposable summand in the first Harder-Narasimhan factor of $E$ and $Q_{m,1}'$ is an indecomposable summand in the last Harder-Narasimhan factor of $F$. Therefore, we get a connecting path $$l_2\circ l\circ l_1:Q_{1,1}\dashrightarrow Q_{m,1}'.$$

By Definition \ref{equivalent relations}.(4), we get $$q_1(l_2\circ l\circ l_1)=q_2(l_2\circ l\circ l_1), $$ which implies that $q_1(l)-q_2(l)$ is a constant real number for any connecting path $l:E\dashrightarrow F$. This proves that $R_1,R_2$ are deformation equivalent.

For the second half of the proposition, we assume that $\mathcal{C}$ is liftable with respect to $\sigma_1$. Our equivalence $H$ is constructed in the following way. 

We start with a semi-stable object $E\in\mathcal{Q}(\phi)$, then proceed as in the proof of Proposition \ref{connections of Z-lifts}. One can easily show that $$H^*\pi_2^*\sigma_2=\pi_1^*\sigma_1$$ just by unwinding the definitions. This equivalence is independent of the choice of semi-stable object $E$ because of Definition \ref{equivalent relations}.(5).
\end{proof}
We have the following theorem relating the stability conditions on $\mathcal{C}$ to Bridgeland stability conditions on its $\mathbb{Z}$-lift. In this theorem, we fix  the factorization  $K_0(\widetilde{\mathcal{C}})\xrightarrow{\pi_0}K_0(\mathcal{C})\xrightarrow{v}\Lambda$ as in Remarks \ref{Remarks after the lifting theorem}.(3).
\begin{theorem}\label{Main theorem}
	Let $\sigma$ be a stability condition on a connective cyclic category $\mathcal{C}$ such that $\mathcal{C}$ is liftable with respect to $\sigma$. By Theorem \ref{lift of cyclic categoryies}, we have a $\mathbb{Z}$-lift $\widetilde{\mathcal{C}}$ of the cyclic category $\mathcal{C}$. We use $Stab_{cyc}^0(\mathcal{C})$ to denote the set of stability conditions whose charge triples are deformation equivalent to the charge triple of $\sigma$, and $Stab_{cyc}^{0,e}(\mathcal{C})$ to denote the equivalent classes $Stab_{cyc}^0(\mathcal{C})/\simeq $.
	
	Then there is an isomorphism $$Stab(\widetilde{\mathcal{C}})/2\mathbb{Z}\xrightarrow{\sim} Stab_{cyc}^{0,e}(\mathcal{C}), $$ where $Stab(\widetilde{\mathcal{C}})$ denotes the space of Bridgeland stability conditions on $\widetilde{\mathcal{C}}$, and the $2\mathbb{Z}$ action is given by $2k\mapsto [2k]$ for any $k\in\mathbb{Z}$. 
\end{theorem}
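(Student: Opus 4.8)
The plan is to exhibit mutually inverse maps
$$\alpha\colon Stab_{cyc}^{0,e}(\mathcal{C})\longrightarrow Stab(\widetilde{\mathcal{C}})/2\mathbb{Z},\qquad \beta\colon Stab(\widetilde{\mathcal{C}})/2\mathbb{Z}\longrightarrow Stab_{cyc}^{0,e}(\mathcal{C}),$$
$\alpha$ by lifting and $\beta$ by pushing down along $\pi\colon\widetilde{\mathcal{C}}\to\mathcal{C}$, and to take $\beta$ as the asserted isomorphism. Throughout one uses that, since $\mathcal{C}$ is liftable with respect to $\sigma$, all Maslov indices vanish and are in particular nonnegative, so Theorem \ref{uniqueness of Hn filtrations} applies to every stability condition that occurs below and all of them are good; this is what makes Definition \ref{equivalent relations} behave well. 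To define $\alpha$, let $[\sigma']\in Stab_{cyc}^{0,e}(\mathcal{C})$ with charge triple $R'$ deformation equivalent to the charge triple $R$ of $\sigma$. By Proposition \ref{connections of Z-lifts}, $\mathcal{C}$ is liftable with respect to $\sigma'$ and there is an equivalence $H\colon\widetilde{\mathcal{C}}\xrightarrow{\sim}\widetilde{\mathcal{C}}_{\sigma'}$ over $\mathcal{C}$; Theorem \ref{lift of cyclic categoryies} gives the Bridgeland stability condition $\pi_{\sigma'}^{*}\sigma'$ on $\widetilde{\mathcal{C}}_{\sigma'}$, whose central charge factors through the fixed $\Lambda$ by Remarks \ref{Remarks after the lifting theorem}(3). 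Put $\alpha([\sigma']):=[H^{*}\pi_{\sigma'}^{*}\sigma']$. Well-definedness has two inputs: the $\mathbb{Z}$ possible choices of $H$ (Remarks \ref{Remarks below Z-lifts}(i)) differ by the deck transformations of $\pi$, namely the shifts $[2k]$ of $\widetilde{\mathcal{C}}$ (which project to $[2k]\cong\mathrm{id}$ on $\mathcal{C}$ via the Bott isomorphism), so $H^{*}\pi_{\sigma'}^{*}\sigma'$ is canonical modulo $2\mathbb{Z}$; and if $\sigma_{1}\simeq\sigma_{2}$ then Proposition \ref{equivalence proposition} supplies a canonical equivalence identifying the two lifts compatibly with the pulled-back stability conditions, so $\alpha$ descends to equivalence classes.

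To define $\beta$, let $\widetilde{\tau}=(\mathcal{P},\widetilde{Z})\in Stab(\widetilde{\mathcal{C}})$; I build $\sigma'=(\mathcal{Q}',Z',\phi',q')$ on $\mathcal{C}$. The central charge descends: the fixed factorization $K_{0}(\widetilde{\mathcal{C}})\xrightarrow{\pi_{0}}K_{0}(\mathcal{C})\xrightarrow{v}\Lambda$ forces $\widetilde{Z}=W\circ v\circ\pi_{0}$ for some $W\colon\Lambda\to\mathbb{C}$, and I set $Z':=W\circ v$. For the circle slicing I take $\mathcal{Q}'(\psi):=\pi\bigl(\mathcal{P}(\widetilde{\psi})\bigr)$ for any $\widetilde{\psi}\equiv\psi\ (\mathrm{mod}\ 2)$; a first lemma is that this is independent of the lift $\widetilde{\psi}$, hence $2\mathbb{Z}$--invariant, because $[2]$ on $\widetilde{\mathcal{C}}$ is carried by $\pi$ to the Bott-twisted identity of $\mathcal{C}$ and so is invisible to $\pi$ on iso-closed subcategories. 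The degree function $q'$ is taken to be that of the real decomposition underlying $\widetilde{\mathcal{C}}$ (a morphism of $\mathcal{C}$ is homogeneous when it lifts to $\widetilde{\mathcal{C}}$, with degree the gap between the two $\phi$--coordinates of the lift), and $\phi'$ is then recovered from $Z'$, $q'$ and connectivity as in Lemma \ref{Triples and pairs}; thus the charge triple of $\sigma'$ has the same degree function as $R$ and is in particular deformation equivalent to $R$. Finally, for an indecomposable $E$, choose a lift $(E,\phi_{0})$, form its Bridgeland HN filtration for $\widetilde{\tau}$, and apply the exact functor $\pi$; the resulting filtration of $E$ has successive cones the $\pi$--images of the $\mathcal{P}$--semistable HN factors, which lie in the various $\mathcal{Q}'(\psi)$. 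One must check that it satisfies every clause of Definition \ref{stability condition on cyclic categories}(7): connectivity and liftability of the full Harder--Narasimhan diagram — including the upward triangles $Q_{i}\to E_{i-1}[1]$ and the $[1]$--periodic continuation — are inherited from the liftability built into the construction of $\widetilde{\mathcal{C}}$ in Theorem \ref{lift of cyclic categoryies} together with Corollary \ref{Fill-in corollary}, and the strict inequalities $q(l)=c_{i}<0$ between consecutive factors translate the strict decrease $\widetilde{\phi}_{i-1}>\widetilde{\phi}_{i}$ of Bridgeland phases. Conditions (1)–(6) of Definition \ref{stability condition on cyclic categories} are immediate from the Bridgeland axioms for $\widetilde{\tau}$. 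Set $\beta(\widetilde{\tau}):=[\sigma']$; since $[2k]^{*}\widetilde{\tau}$ has the same $\pi$--pushforward of slicing and HN data, $\beta$ factors through $Stab(\widetilde{\mathcal{C}})/2\mathbb{Z}$.

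It remains to check $\alpha$ and $\beta$ are mutually inverse. For $\beta\circ\alpha=\mathrm{id}$: the slicing of $\pi_{\sigma'}^{*}\sigma'$ is $\{(E,\phi):E\in\mathcal{Q}'(\phi\bmod 2)\}$, and by the very construction in Theorem \ref{lift of cyclic categoryies} the Bridgeland HN filtration of a lift $(E,\phi_{0})$ is the lift of the HN filtration of $E$ with respect to $\sigma'$ — here uniqueness (Theorem \ref{uniqueness of Hn filtrations}) is exactly what forces the two filtrations of $E$ to agree — so, transporting by $H$ and pushing down, one recovers $\sigma'$ up to the basepoint ambiguity, i.e.\ up to $\simeq$ and up to $2\mathbb{Z}$. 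For $\alpha\circ\beta=\mathrm{id}$: $\beta(\widetilde{\tau})$ has charge triple deformation equivalent to $R$, so Theorem \ref{lift of cyclic categoryies} re-lifts it to a Bridgeland stability condition on a copy of $\widetilde{\mathcal{C}}$ having the same $\pi$--image and the same lift structure as $\widetilde{\tau}$; the two therefore agree after the identifying equivalence $H$, that is, modulo $2\mathbb{Z}$. Continuity of $\alpha,\beta$ for the natural topologies is routine from the explicit formulas for the central charges and phases, so the bijection upgrades to a homeomorphism.

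I expect the genuinely hard step to be the Harder--Narasimhan verification inside the construction of $\beta$: proving that $\pi$ carries a Bridgeland HN filtration of $(E,\phi_{0})$ to a diagram of \emph{exactly} the shape required by Definition \ref{stability condition on cyclic categories}(7) — connective, liftable, with the downward quotient maps, the upward connecting triangles, and all their shifts fitting together simultaneously — and extracting the numbers $c_{i}<0$ from the strict monotonicity of Bridgeland phases. This will need Corollary \ref{Fill-in corollary} to fill in the missing morphisms liftably, Lemma \ref{important lemma} and Proposition \ref{Nontrivial composition} for the required nonvanishing of compositions of the $f_{i}$, and a careful bookkeeping of how the indecomposable summands of the $E_{i}$ and $Q_{i}$ are joined by connecting paths in the lifted diagram. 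The smaller points — that $\mathcal{Q}'$ really is a circle slicing, and that $\phi'$ and $q'$ are well defined up to the allowed ambiguities — are routine once the covering picture is in place.
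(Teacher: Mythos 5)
Your overall architecture --- mutually inverse maps, with $\alpha$ obtained by pulling back along the equivalence of Proposition \ref{connections of Z-lifts} and $\beta$ by pushing the slicing and Harder--Narasimhan data down along $\pi$ --- coincides with the paper's maps $\mathbb{L}$ and $\mathbb{P}$, and your treatment of $\alpha$, of the $2\mathbb{Z}$-ambiguity in the choice of $H$, and of descent to equivalence classes via Proposition \ref{equivalence proposition} matches the paper. However, there is a genuine gap in your construction of $\beta$: you take the degree function $q'$ of $\beta(\widetilde{\tau})$ to be the one already underlying $\widetilde{\mathcal{C}}$, i.e.\ the degree function $q_0$ of the reference stability condition $\sigma$. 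That choice is incompatible with the new central charge $Z'$. Condition (3) of Definition \ref{stability condition on cyclic categories} forces $q'(f)\equiv\phi'(E_2)-\phi'(E_1)\pmod{2\mathbb{Z}}$ with $\phi'$ read off from $Z'$, whereas $q_0(f)\equiv\phi_0(E_2)-\phi_0(E_1)$ with the old phases; for a general $\widetilde{\tau}$ (for instance one obtained from $\pi^{*}\sigma$ by a non-uniform rotation of Bridgeland phases) these differ by a quantity not in $2\mathbb{Z}$. Consequently $(Z',q_0)$ is not a charge pair, so Lemma \ref{Triples and pairs} cannot be invoked to recover $\phi'$ as you propose; condition (6) (nonnegativity of degrees of morphisms between $\mathcal{Q}'$-semistable objects) fails; and the constants $c_i<0$ in condition (7) would record the old phase gaps rather than the strict decrease of the new Bridgeland phases.

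The paper's proof spends most of its length on precisely the step you elided: it builds a \emph{new} degree function by setting $q(\pi(f))=\phi_2-\phi_1$ (new Bridgeland phases) when both ends are $\widetilde{\tau}$-semistable, and, for unstable ends, routing the degree through the last Harder--Narasimhan factor $p_n\colon A'\to Q_n'$ and a chosen set $S$ of shift-representatives of unstable indecomposables. It then establishes the ``invariance of semi-stable path,'' shows that the resulting charge triple is deformation equivalent to $R$ (the two degree functions differ by a potential depending only on endpoints, not identically --- this is where your ``same degree function'' claim should be replaced), and checks independence of the choices of $S$ and of $\phi(E)$ when $Z(E)=0$. You correctly flag the verification of the Harder--Narasimhan diagram (connectivity by induction on the number of factors, liftability via the invariance of semi-stable paths) as delicate, but the prior and more essential missing ingredient is this re-graded real decomposition; without it the object you call $\sigma'$ is not a stability condition in the sense of Definition \ref{stability condition on cyclic categories}, and the identity $\alpha\circ\beta=\mathrm{id}$ cannot even be formulated.
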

\begin{proof}
	Let us first give a map $$\mathbb{L}:Stab^{0,e}_{cyc}(\mathcal{C})\rightarrow Stab(\widetilde{\mathcal{C}})/2\mathbb{Z}.$$

	Fix an indecomposable object $G\in\mathcal{C}$, for any stability condition $\sigma_1\in Stab^0(\mathcal{C})$, we have an equivalence $$H:\widetilde{\mathcal{C}}\rightarrow \widetilde{\mathcal{C}}_1$$ by Proposition \ref{connections of Z-lifts}, where $\widetilde{\mathcal{C}}_1$ is the $\mathbb{Z}$-lift of $\mathcal{C}$ with respect to $\sigma_1$. We define $$\mathbb{L}(\sigma_1)=H^*\pi_1^*\sigma_1,$$  where $\pi_1:\widetilde{\mathcal{C}}_1\rightarrow \mathcal{C}$ is the natural covering functor. This map $\mathbb{L}$ is well defined on equivalence classes by Proposition \ref{equivalence proposition} and the fact that these equivalent functors  form a connection (as we fix an indecomposable object $G$, see Remark \ref{Remarks below Z-lifts}). 
	
	On the other hand, we need to  construct a map $$\mathbb{P}:Stab(\widetilde{\mathcal{C}})/2\mathbb{Z}\xrightarrow{} Stab_{cyc}^{0,e}(\mathcal{C}).$$  Given a Bridgeland stability condition $\widetilde{\sigma}_1=(\mathcal{P},\widetilde{Z})$ on $\widetilde{\mathcal{C}}$, we can define a stability condition $\mathbb{P}(\widetilde{\sigma}_1)\coloneqq \sigma_1=(\mathcal{Q},Z,\phi,q)$ on $\mathcal{C}$ in the following way. 
	
	For the circle slicing $\mathcal{Q}$, let $\mathcal{Q}(\phi)$ be the full subcategory consisting of objects $\pi(\mathcal{P}(\phi))$ for any $\phi\in (0,2]$.  
	
For the central charge, we know that $\widetilde{Z}$ factors as $$\widetilde{Z}:K_0(\widetilde{\mathcal{C}})\xrightarrow{\pi_0}K_0(\mathcal{C})\xrightarrow{v}\Lambda\xrightarrow{g}\mathbb{C}, $$  we let $Z:K_0(\mathcal{C})\xrightarrow{v}\Lambda\xrightarrow{g}\mathbb{C}$ be the central charge in $\sigma_1$.
	 
	For the phase function $\phi$, let $E$ be an indecomposable object in $\mathcal{C}$, if $$Z(E)=m(E)e^{i\pi \phi}\neq 0$$ and $\phi\in (0,2]$,  we let $\phi(E)=\phi$. In the other case, if $Z(E)=0$, we let $\phi(E)$ to be any real number in $(0,2]$ and $\phi(E[1])\equiv\phi(E)+1\ (mod \ 2\mathbb{Z})$. In the end, we will show that this is well defined, i.e. all the possible stability conditions defined in this way are equivalent to each other.
	
	For the real decomposition, one can show that for any two indecomposable objects $E',F'\in\widetilde{\mathcal{C}}$, we define $$Hom_{\mathcal{C}}(\pi(E'),\pi(F'))=\bigoplus_{k\in \mathbb{Z}} Hom_{\widetilde{\mathcal{C}}}(E', F'[2k]) .$$ This defines the same set of  homogeneous morphisms as in $\sigma$. We need to assign appropriate degree of each homogeneous morphism.

	Let $f:E'\rightarrow F'$ be a nonzero morphism between two indecomposable objects in $\widetilde{\mathcal{C}}$. In the case when $E'\in \mathcal{P}(\phi_1), F'\in\mathcal{P}(\phi_2)$, we define $q(f)=\phi_2-\phi_1$, which is a positive real number by definition. In other cases, let us take $S$ to be the set consisting of  unstable indecomposable objects in $\widetilde{\mathcal{C}}$ such that, any unstable indecomposable object in $\widetilde{\mathcal{C}}$ is isomorphic to $A'[k]$, where $A'\in S$ and $k\in\mathbb{Z}$, and for any two different objects $A', B'\in S$, we have that $A'$ is not isomorphic to $B'[k]$ for any $k\in\mathbb{Z}$. 
	
	In the case when $E'\in\mathcal{P}(\phi_1)$, and $F'\simeq A'[k]$ where $A'\in S$. Let us take $A'\xrightarrow{p_n} Q_n'$ to be the natural morphism from $A'$ to its last Harder-Narasimhan factor $Q_n'$. Then we have the following diagram $$\begin{tikzcd}
		E'\in \mathcal{P}(\phi_1)\arrow{r}{f} & F'\simeq A'[k]\arrow{r}{p_n[k]} &Q_n'[k]\in\mathcal{P}(k+\phi_2)
	\end{tikzcd}$$
	We define $$q(\pi(p_{n,i}))=\phi_2-\phi(A'),\ \ q(\pi(f))=k+\phi(A')-\phi_1,$$ where $p_{n,i}$ is a summand of the morphism of $p_n$.
	
	The case when $F'\in\mathcal{P}(\phi_2)$ and  $E'\simeq A'[k]$ where $A'\in S$ is similar. Indeed, we have the following diagram in $\widetilde{\mathcal{C}}$ $$\begin{tikzcd}
		E'\arrow{r}{f} \arrow{d}{p_n[k]}& F' \in\mathcal{P}(\phi_2)\\ Q_n'[k]\in\mathcal{P}(\phi_1+k) &
	\end{tikzcd}$$ where $p_n:A'\rightarrow Q_n'$ is the natural morphism from $A'$ to its last Harder-Narasimhan factor $Q_n'\in\mathcal{P}(\phi_1)$. We define $$q(\pi(p_{n,i}))=\phi_1-\phi(A'),\ \ q(\pi(f))=\phi_2-\phi(A')-k.$$

The last case is when $E'\simeq A'[k_1]$ and $F'\simeq B'[k_2]$, we have the following diagram $$\begin{tikzcd}
	E'\arrow{r}{f} \arrow{d}{p_n[k_1]}& F' \arrow{d}{p_m[k_2]} \\ Q_n'[k]\in\mathcal{P}(\phi_1+k_1) & Q_m'\in\mathcal{P}(\phi_2+k_2)
\end{tikzcd}$$ where $p_n:A'\rightarrow Q_n', \ p_m:B'\rightarrow Q_m'$ are the natural morphisms from $A', B'$ to their last Harder-Narasimhan factors $Q_n'\in\mathcal{P}(\phi_1), Q_m'\in\mathcal{P}(\phi_2)$ respectively. We define $$q(\pi(f))=k_2-k_1+\phi(B')-\phi(A').$$

This gives the degree function of the real decomposition on $\mathcal{C}$. Thus, we got our data $\sigma_1=(\mathcal{Q}, Z,\phi,q)$ on $\mathcal{C}$. Before proving that $\sigma$ is indeed a stability condition on $\mathcal{C}$ and our map $\mathbb{P}$ is well defined, let us state an easy consequence of the construction of degrees. 

For any connecting path $$l:E'\in\mathcal{P}(\phi_1)\dashrightarrow F'\in\mathcal{P}(\phi_2)$$ in $\widetilde{\mathcal{C}}$ (with respect to $\sigma_1$), we have $$q(\pi(l))=\phi_2-\phi_1.$$This follows directly from the construction of degree function $q$, we leave the direct check to the reader. We call such property as the invariance of semi-stable path.

There are several things to check for the construction of $\sigma_1$, we list them below. \begin{enumerate}
	\item The data $\sigma_1=(\mathcal{Q},Z,\phi, q)$ is a stability condition on $\mathcal{C}$,
	\item the charge triple $(Z,\phi, q)$ is deformation equivalent to the charge triple of $\sigma$,
	\item and the equivalent class of $\sigma_1$ is independent of the choices of $\phi(E)$ for $Z(E)=0$ and the set $S$.
\end{enumerate}

For (1), most conditions in Definition \ref{stability condition on cyclic categories} are direct consequences of the construction. We only need to check that the Harder-Narasimhan diagram is  connective and liftable. We prove the connectivity by the induction on the number of Harder-Narasimhan factors. 

Firstly, if $E'$ is semi-stable in $\widetilde{\mathcal{C}}$, the connectivity is obvious. Assume that the connectivity condition is true for any indecomposable object $E'$ with $n-1$ Harder-Narasimhan factors.  We consider an indecomposable object $E'$ with $n$ Harder-Narasimhan factors. Let the following distinguished triangle $$E'_{n-1}\xrightarrow{f_n} E'_n=E'\xrightarrow{ p_n} Q_n' \xrightarrow{} E'_{n-1}[1]$$ be the last triangle in the Harder-Narasimhan filtration of $E'$ with respect to $\sigma_1'$. We claim that every summand of $f_n$ or $p_n$ is non-trivial. The proof is essentially included in the proof of Lemma \ref{important lemma}. We prove the case of $f_n$ for readers' convenience. If there is an indecomposable summand $E_{n-1,1}'$ of $E_{n-1}'$ with $f_n|_{E_{n-1,1}'}=0$. Then we have the following commutative diagram $$\begin{tikzcd}
	E_{n-1,1}'\arrow{r}{0} \arrow{d}{i}& E_n' \arrow{r} \arrow{d}{id}& E_{n-1,1}'[1]\oplus E_n'\arrow{r} \arrow{d}& E_{n-1,1}'[1] \arrow{d}{i[1]}\\ E_{n-1}'\arrow{r}{f_n} & E_n' \arrow{r}{p_n} & Q_n'\arrow{r} &E_{n-1}'[1]
\end{tikzcd}$$ This implies that $E_{n-1,1}'[1]$ is a direct summand of $Q_n'$, but this is impossible unless $E_{n-1,1}'=0$, as every Harder-Narasimhan factor of $E_{n-1}$ had bigger phase the phase of $Q_n'$. Thus the claim is proved. The connectivity is also proved, as the Harder-Narasimhan diagram of any indecomposable summand of $E_{n-1}'$ is connective by induction, and every such  summand is connected to $E'$ and the summands of $Q_n'$.

The liftability of Harder-Narasimhan diagram follows from the connectivity and the invariance of semi-stable path. Indeed, let $l_1,l_2:A' \dashrightarrow B'$ be two connecting path in the Harder-Narasimhan diagram of $E'$. There are connecting paths $l_3:Q_1 \dashrightarrow A'$ and $l_4:B'\dashrightarrow Q_n$ by connectivity condition. By the invariance of semi-stable path, we get $$ q(l_4\circ l_1\circ l_3)=q(l_4\circ l_2\circ l_3),$$ hence $q(l_1)=q(l_2)$ and (1) is proved.

For (2), let $l_1,l_2:E\dashrightarrow F$ be two connecting paths in $\mathcal{C}$. They can be lifted to connecting paths $l_1': E'\dashrightarrow F'$ and $l_2':E'\dashrightarrow F'[2k]$ in $\widetilde{\mathcal{C}}$ where $2k=q_0(l_2)-q_0(l_1)$ ($q_0$ is the degree function in $\sigma$). From the definition of $q$, one can easily show that $q(l_2)-q(l_1)$ also equals to $2k$. Hence (2) is proved.

For (3), it follows directly from the invariance of semi-stable path. Hence $\mathbb{P}$ is well defined.
	
By unwinding the definitions, it is easy to see that $\mathbb{L}\circ \mathbb{P}=id $ and $\mathbb{P}\circ \mathbb{L}=id$.  Hence, the theorem is proved.
\end{proof}
\begin{remarks}
	(i) The equivalence relation in Definition \ref{equivalent relations} is the reason why the phases of unstable objects in Bridgeland stability conditions are not well defined in general. 
	
	(ii) Note that in this paper, we always fix the set of homogeneous morphisms in the real decomposition. Theorem \ref{Main theorem} roughly says that the deformation of stability conditions with fixed set of homogeneous morphisms corresponds to the deformation of Bridgeland stability conditions on $\widetilde{\mathcal{C}}$. We will investigate the other deformation direction (the deformation of the set of homogeneous morphisms) in the future research, we expect that it corresponds to the deformation of the category $\widetilde{C}$ itself.
	
	(iii) It could be interesting to compare this theorem with \cite[Theorem 1]{Geometricclassificationoftotalstabilityspaces}. We also have the isomorphism $$Stab^{0,e}_{cyc}(\mathcal{T}/2\mathbb{Z})\xrightarrow{\sim} Stab(\mathcal{T})/2\mathbb{Z},$$ when the orbit category $\mathcal{T}/2\mathbb{Z}$ admits a triangulated structure. See \cite{Ontriangulatedorbitcategories} for the related notion and results. 
\end{remarks}

We end this section by gently touching another aspect of the space $Stab_{cyc}(\mathcal{C})$:  chirality of objects and morphisms.

\subsection{Chirality of morphisms and objects}
There is a natural  chirality of morphisms and objects from the definition of charge triples.

\begin{definition}
	Given a real decomposition on a cyclic category $\mathcal{C}$, let $f$ be  a homogeneous morphism, we say that $f$ is left chiral if $q(f)>0$, and $f$ is right chiral if $q(f)<0$.
	
	Let $E$ be an indecomposable object in $\mathcal{C}$, we say that $E$ is left chiral if the Maslov indices of all basic loops involving $E$ are non-negative. Similarly, $E$ is right chiral if the Maslov indices of all basic loops involving $E$ are negative. In such cases, we say that $E$ has a well-defined chirality.
\end{definition}

\begin{lemma}
	Let $\mathcal{C}$ be a $k$-linear Krull-Schmidt connective cyclic category, and $q$ be a real decomposition on $\mathcal{C}$ such that $\mathcal{C}$ is  liftable with respect to $q$. If moreover, every indecomposable object has a well-defined chirality. Then all indecomposable objects has the same chirality.
\end{lemma}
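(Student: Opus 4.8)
The plan is to reduce the statement to the single observation that liftability of $\mathcal{C}$ with respect to $q$ forces every Maslov index to be exactly $0$, so that every indecomposable object is in particular left chiral.

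First I would unwind the two relevant definitions. By Definition~\ref{Definition of liftable and deformation equivalence}.(1), the hypothesis that $\mathcal{C}$ be liftable with respect to $q$ means exactly that $\mathcal{C}$ is pre-liftable with respect to the real decomposition underlying $q$ and that the Maslov indices of all basic loops vanish; note that both conditions depend only on the degree function $q$ (pre-liftability by Definition~\ref{preliftability}, and the Maslov index by Definition~\ref{Maslov index}, since $M(l)=\tfrac{q(f)-1}{2}$ involves no central charge or phase function), so ``liftable with respect to $q$'' is a meaningful phrase here. Hence, for every basic loop $l$ of $\mathcal{C}$ we have $M(l)=0$, equivalently $q(f)=1$.

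Next I would plug this into the definition of the chirality of an object. Let $E\in\mathcal{C}$ be indecomposable and let $l$ be any basic loop involving $E$, i.e.\ a basic loop in the sense of Definition~\ref{Fundamental distinguished hexagon} attached to a distinguished triangle one of whose terms has $E$ as an indecomposable summand (recall from the remark after that definition that such a loop may equally be based at $E$). By the previous paragraph $M(l)=0\ge 0$. Since this holds for every basic loop involving $E$, the object $E$ is left chiral. If there is at least one such loop then $E$ is not right chiral, as $0$ is not strictly negative; and if there is no such loop at all then $E$ is vacuously of both chiralities, in particular left chiral; either way the hypothesis that $E$ has a well-defined chirality is consistent only with $E$ being left chiral. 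As $E$ was arbitrary, every indecomposable object of $\mathcal{C}$ is left chiral, so they all carry the same chirality.

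I do not anticipate a serious obstacle: the argument is essentially bookkeeping with the definitions, and the only point that needs a word of care is the degenerate case of an indecomposable object lying in no fundamental distinguished hexagon, which is dealt with by the vacuous reading of ``left chiral'' above. Connectivity of $\mathcal{C}$ is not used in the proof itself; it serves only to make ``the chirality of $\mathcal{C}$'' a meaningful single invariant. Conceptually, the lemma records that the chirality symmetry breaking phenomenon, i.e.\ the coexistence of indecomposable objects of opposite chiralities, can arise only when $\mathcal{C}$ fails to be liftable, that is, precisely when the associated stability condition cannot be promoted to a Bridgeland stability condition by Theorem~\ref{lift of cyclic categoryies}.
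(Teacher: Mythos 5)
Your proof is correct under the literal reading of Definition~\ref{Definition of liftable and deformation equivalence}.(1), but it takes a genuinely different route from the paper's. You observe that liftability already forces every Maslov index to equal $0$, hence every indecomposable object is left chiral and the conclusion is immediate; connectivity and the well-defined-chirality hypothesis become essentially superfluous. The paper instead runs a propagation argument: given a nontrivial homogeneous morphism $f:M\rightarrow N$ between indecomposables, pre-liftability completes $f$ to a liftable diagram $M\xrightarrow{f}N\xrightarrow{g}Cone(f)\xrightarrow{h}M[1]$ in which every summand of $g$ and $h$ is nontrivial, so $M$ and $N$ sit on a common basic loop; a single loop cannot have Maslov index both $\geq 0$ and $<0$, so two objects with well-defined chirality joined by a homogeneous morphism must share it, and connectivity then spreads one object's chirality to all of $\mathcal{C}$. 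What the paper's argument buys is that it uses only pre-liftability, not the vanishing of Maslov indices, so it also covers the situation where all objects are right chiral --- the case relevant to the chirality-symmetry-breaking examples of Section 6, where the charge triples are precisely \emph{not} liftable. Your observation, conversely, exposes that the lemma as literally stated is nearly vacuous (the hypothesis already pins the common chirality to ``left''), which suggests the intended hypothesis is only pre-liftability; under that weaker hypothesis your argument no longer applies and the paper's propagation argument is the one that is needed.
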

\begin{proof}
	Suppose that $E$ is a left chiral indecomposable object. For any other indecomposable object $F$, there is a connecting path between $E$ and $F$. 
	
	Since any nontrivial homogeneous morphism $f:M\rightarrow N$ between two indecomposable objects can be completed to a liftable diagram  $$\cdots \rightarrow M\xrightarrow{f} N\xrightarrow{g} Cone (f)\xrightarrow{h} M[1]\rightarrow \cdots .$$ Then every summand of $g$ and $h$ are nontrivial. As the proof has already appeared twice in the proofs of Lemma \ref{important lemma} and Theorem \ref{Main theorem}, we leave it to the reader.  This non-triviality implies that the chirality of $M$ is the same as the chirality of $N$. 
	
	Hence, by chasing the connecting path, we can conclude that $F$ is also left chiral.
\end{proof}
There is a natural involution map $\tau$ on the space $\textbf{T}(\mathcal{C})$, which is defined in the following way:

$$\tau(Z,\phi,q)=(-\bar{Z},\phi', -q),$$ where $\bar{Z}$ is the conjugate of the central charge $Z$ and $\phi'(E)+\phi(E)=1(mod \ 2\mathbb{Z})$ for any indecomposable object $E$. Obviously, this involution changes the chirality of every homogeneous morphism to its opposite.

Notice that the stability condition breaks the chirality symmetry of real decompositions. In fact, for the forgetful map $$Stab(\mathcal{C})\xrightarrow{\alpha} \textbf{T}(\mathcal{C}),$$  there are charge triples $R$ with nontrivial preimage $\alpha^{-1}(R)$ but $\alpha^{-1}(\tau(R))=\emptyset$. This will be illustrated in the next section.

\section{Stability conditions on Equivariant Matrix factorizations of $A_2$ type}\label{MF of AD type}
In this section, we will present some examples of stability conditions on a cyclic category. We start with a Schur type lemma in any $k$-linear category.

\begin{lemma}
	Let $\mathcal{T}$ be any $k$-linear category, and $E,F$ are two simple objects in $\mathcal{T}$, i.e. $Hom_{\mathcal{T}}(E,E)=Hom_{\mathcal{T}}(F,F)=k$. Then the composition $$Hom_{\mathcal{T}}(E,F)\times  Hom_{\mathcal{T}}(F,E)\rightarrow Hom_{\mathcal{T}}(E,E)\xrightarrow{\sim} k$$ is trivial unless $E$ is isomorphic to $F$.
\end{lemma}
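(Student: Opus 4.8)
The plan is to use the standard Schur-lemma argument adapted to a $k$-linear category where we only know the endomorphism rings of $E$ and $F$ are $k$. Suppose $\varphi \in Hom_{\mathcal{T}}(E,F)$ and $\psi \in Hom_{\mathcal{T}}(F,E)$, and consider the composition $\psi\circ\varphi \in Hom_{\mathcal{T}}(E,E) = k\cdot id_E$. Write $\psi\circ\varphi = \lambda\, id_E$ for some $\lambda \in k$. The goal is to show that if $\lambda \neq 0$, then $E$ is isomorphic to $F$, so that conversely, whenever $E \not\simeq F$ the composition must be the zero scalar.

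First I would observe that if $\lambda \neq 0$, then $(\lambda^{-1}\psi)\circ\varphi = id_E$, so $\varphi$ is a split monomorphism with left inverse $\lambda^{-1}\psi$. Next I would look at the other composition $\varphi\circ\psi \in Hom_{\mathcal{T}}(F,F) = k\cdot id_F$, say $\varphi\circ\psi = \mu\, id_F$. Then $\varphi\circ\psi\circ\varphi = \mu\varphi = \varphi\circ(\lambda\, id_E) = \lambda\varphi$, so $(\mu-\lambda)\varphi = 0$. Since $\varphi$ is a split mono it is in particular nonzero (as $\lambda\neq 0$ forces $\psi\circ\varphi\neq 0$), hence $\mu = \lambda \neq 0$ as well. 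Therefore $(\lambda^{-1}\psi)\circ\varphi = id_E$ and $\varphi\circ(\lambda^{-1}\psi) = \mu\lambda^{-1} id_F = id_F$, so $\varphi$ is an isomorphism $E \xrightarrow{\sim} F$ with inverse $\lambda^{-1}\psi$.

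The contrapositive gives the statement: if $E$ is not isomorphic to $F$, then no $\lambda$ arising as $\psi\circ\varphi$ can be nonzero, i.e. the composition $Hom_{\mathcal{T}}(E,F)\times Hom_{\mathcal{T}}(F,E)\to Hom_{\mathcal{T}}(E,E)\cong k$ is identically zero. There is essentially no obstacle here; the only subtlety is to be careful that "the composition is trivial" is read as "every pairwise composite $\psi\circ\varphi$ is $0$," and the argument above establishes exactly that by deriving a contradiction with $E\not\simeq F$ from a single nonzero composite. One could phrase the whole thing slightly more slickly by noting $\psi\circ\varphi$ nonzero $\Rightarrow$ invertible in $k$, but the two-sided check via $\varphi\circ\psi$ is what actually produces the isomorphism, so I would keep it.
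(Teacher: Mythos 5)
Your proposal is correct and follows essentially the same route as the paper: both arguments take a nonzero composite $\psi\circ\varphi=\lambda\,id_E$, use associativity to show the reverse composite $\varphi\circ\psi$ is the same nonzero scalar times $id_F$ (the paper computes $(b\circ a)^2=xy\,id_E$ where you compute $\varphi\circ\psi\circ\varphi$, a cosmetic difference), and conclude that $\varphi$ and $\lambda^{-1}\psi$ are mutually inverse. No gaps.
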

\begin{proof}
	Suppose that the composition is not trivial, i.e. there exist morphisms $a\in Hom_{\mathcal{T}}(E,F)$ and $b\in Hom_{\mathcal{T}}(F,E)$ such that their composition $b\circ a=x\cdot id _E$, where $x\in k$ is nonzero. 
	
	We claim that $a\circ b=x\cdot id_F$. Indeed, denote $a\circ b\coloneqq y\cdot id_F$. Then $$x^2\cdot id_E=(x\cdot id_E) \circ (x\cdot id_E)=b\circ a\circ b\circ a=b\circ (y\cdot id_F) \circ a=xy\cdot id_E.$$ Hence, we get $x^2=xy$, which implies $x=y$ since $x$ is nonzero. Therefore, $a$ and $x^{-1}b$ are inverse to each other. 
\end{proof}
\begin{corollary}
	Let $\mathcal{C}$ be a $k$-linear Krull-Schmidt category, and $E$ be a simple object in $\mathcal{C}$. Then either the composition $$Hom(E,E[1])\times Hom(E, E[1])\rightarrow Hom(E,E[2])\xrightarrow{\sim}k$$ is trivial or $E\xrightarrow{\sim} E[1]$. 
\end{corollary}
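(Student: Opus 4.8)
The plan is to apply the Schur-type lemma directly to the cyclic category $\mathcal{C}$, together with the Bott isomorphism $\beta:[2]\xrightarrow{\sim}id$. First I would fix a simple object $E$ in $\mathcal{C}$ and consider the composition map
\[
\mu: Hom(E,E[1])\times Hom(E,E[1])\rightarrow Hom(E,E[2]).
\]
The key observation is that $\mathcal{C}$ being a cyclic category supplies a canonical isomorphism $\beta_E: E[2]\xrightarrow{\sim} E$, which we use to identify $Hom(E,E[2])\xrightarrow{\sim}Hom(E,E)=k$ (since $E$ is simple). So $\mu$ can be reinterpreted as a pairing valued in $k$.

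The next step is to reduce to the previous lemma. Suppose $\mu$ is nontrivial, so there exist $a,b\in Hom(E,E[1])$ with $\beta_E\circ(b\circ a)=x\cdot id_E$ for some nonzero $x\in k$ — here I am thinking of $a:E\to E[1]$ and $b:E[1]\to E[2]$ via the shift, i.e. $b=b'[1]$ for some $b'\in Hom(E,E[1])$; the point is that $E[1]$ is also a simple object (its endomorphism ring is $Hom(E[1],E[1])\cong Hom(E,E)=k$). Then I would apply the Schur-type lemma to the pair of simple objects $E$ and $E[1]$: the composition $Hom(E,E[1])\times Hom(E[1],E)\to Hom(E,E)\xrightarrow{\sim}k$ is nontrivial, so by that lemma $E\xrightarrow{\sim}E[1]$. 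The mild bookkeeping here is to convert between $Hom(E,E[2])$ and $Hom(E[1],E)$ using $\beta$: precomposing with $\beta_E^{-1}$ or using the naturality of $\beta$ gives an isomorphism $Hom(E,E[2])\cong Hom(E[1],E[2][- ? ])$ — more cleanly, $Hom(E,E[2])\cong Hom(E[-2],E)\cong Hom(E,E)$, and shifting once, $Hom(E[1],E[1][1])\cong Hom(E[1],E[1][1])$, so that a nonzero element of $Hom(E,E[1])$ composed appropriately lands in $\operatorname{Aut}(E)$ after applying $\beta$.

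The cleanest way to organize this is: regard $a\in Hom(E,E[1])$ and $\beta_E\circ b[1]\in Hom(E[1],E)$ (using $b\in Hom(E,E[1])$ and the Bott isomorphism to land back in $E$), so that the nontriviality of $\mu$ is exactly the statement that the composition pairing of the lemma applied to the simple objects $E$ and $E[1]$ is nonzero; the lemma then yields $E\simeq E[1]$, which is the desired conclusion. Conversely if $E\simeq E[1]$ there is nothing to prove, so the dichotomy holds.

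The only real subtlety — and the step I would be most careful with — is making the identification $Hom(E,E[2])\cong Hom(E[1],E[1])$ and checking that under this identification the bilinear composition $\mu$ really does correspond to the composition pairing between the two simple objects $E$ and $E[1]$ appearing in the Schur-type lemma, rather than to some twist of it by $\beta$ that could secretly vanish for formal reasons. Since $\beta$ is a natural isomorphism of functors, this compatibility is automatic: $\beta_E\circ(-)[1]$ intertwines composition in the expected way, so no genuine obstruction arises; it is purely a matter of writing the diagram correctly. Once that identification is in place, the corollary follows immediately from the lemma.
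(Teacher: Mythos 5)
Your proof is correct and is exactly the intended derivation: the paper gives no separate proof of this corollary, since it follows from the Schur-type lemma by taking $F=E[1]$ (simple because $Hom(E[1],E[1])\cong Hom(E,E)=k$) and using the shift together with $\beta_E$ to identify the second factor $Hom(E,E[1])$ with $Hom(E[1],E)$ and the target $Hom(E,E[2])$ with $Hom(E,E)=k$. Your care about the compatibility of $\mu$ with the lemma's pairing under these identifications is warranted but, as you note, automatic from naturality of $\beta$ and functoriality of the shift.
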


\begin{remark}
	As one can see from the definition of stability conditions,  the existence of non-trivial indecomposable  objects $E\simeq E[1]$ forms an obstruction of the existence of stability conditions on $\mathcal{C}$. Unfortunately, this obstruction is  very common.
	
	However, some finite  group action may resolve such obstructions. Let us begin with weighted homogeneous polynomial $w\in \mathbb{C}[[x_1,\cdots, x_N]]$ and their Abelian automorphisms (the following definitions are taken from \cite[Section 2]{FJRWtheory}).
	
\end{remark}
	
	\begin{definition}\label{quasi-homogeneous polynomial}
		A weighted homogeneous polynomial $w\in\mathbb{C}[[x_1,\cdots,x_N]]$ is a polynomial for which there exist positive rational numbers $q_1,\cdots, q_n\in\mathbb{Q}_{>0}$, such that for any $\lambda\in\mathbb{C}^*$ $$w(\lambda^{q_1}x_1,\cdots, \lambda^{q_N}x_N)=\lambda w(x_1,\cdots ,x_N).$$ We will call $q_j$ the weight of $x_j$. We define $d$ and $n_i$ to be the unique positive integers such that $(q_1,\cdots, q_N)=(n_1/d, \cdots, n_N/d)$ with $gcd(d,n_1,\cdots,n_N)=1$.
	\end{definition}

\begin{definition}
	We call $w$ nondegenerate if \begin{enumerate}
		\item the polynomial $w$ contains non monomial of the form $x_ix_j$, for $i\neq j$ and
		
		\item the hypersurface defined by $w=0$ in weighted projective space is non-singular, or equivalently, the affine hypersurface defined by $w=0$ has isolated singularity at the origin.
		
	\end{enumerate}
\end{definition}
	\begin{definition}
		There are special groups associated with the polynomial $w$. The first one $G_w$ is defined in the following way
		
		$$G_w\coloneqq\{(\alpha_1,\cdots, \alpha_N)\in (\mathbb{C}^*)^N| w(\alpha_1x_1,\cdots, \alpha_Nx_N)=w(x_1,\cdots, x_N)\}.$$
		
		There is special element $J\in G_w$ which is defined to be $$J\coloneqq (exp(2\pi i q_1), \cdots, exp(2\pi iq_N)),$$ where the $q_i$ are the weights defined in Definition \ref{quasi-homogeneous polynomial}. For any group $G$ with $\langle J\rangle \leq G\leq G_w$, we call $G$ an admissible subgroup of  $G_w$.
	\end{definition}

Recall the following famous $ADE$ examples (see e.g. \cite{KSTmatrixfactorizations}).

\begin{example}  The simple singularities can be classified in the following way.
	\begin{itemize}
		\item $A_n: w=x^{n+1}, n\geq 1;$
		\item  $D_n:  w=x^{n-1}+xy^2, n\geq 4$;
		\item $E_6: w=x^3+y^4$;
		\item $E_7: w=x^3+xy^3$;
		\item $E_8: w=x^3+y^5$.
	\end{itemize}
	
	\begin{figure}[p]\label{Figure 2}
		\centering
		\includegraphics[scale=1]{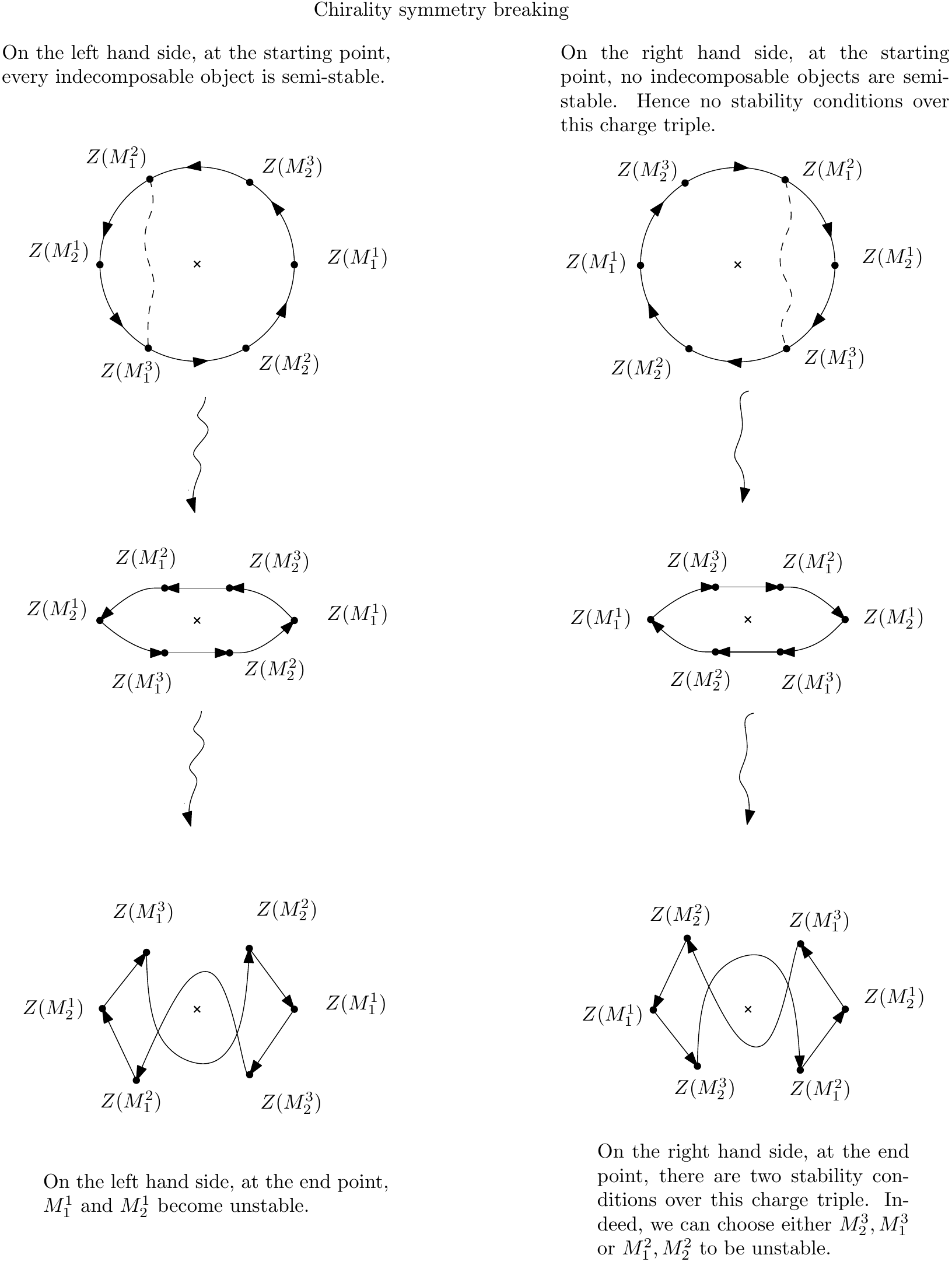}
		\caption{}
	\end{figure}
	By \cite{CMmoduleoverhypersurfaceI} and \cite{CMmodulesoverhypersurfaceII}, we know that the category $HMF(R, w)$ has only  finitely many indecomposable objects if and only if $w$ is of simple singularity.
\end{example}

 In the following, we present some examples of stability conditions on the homotopy category of $\mathbb{Z}/3\mathbb{Z}$-equivariant  matrix factorizations of $x^3$. And we use  results from the paper \cite[Section 4.6, Section 7.1]{StabilityofLGbranes}  as our starting point, readers should consult these two sections for the details.

\begin{example}
	We draw pictures of deforming stability conditions on the homotopy category of  $\mathbb{Z}/3\mathbb{Z}$-equivariant matrix factorizations of $A_2$ case. In the top left corner of Figure 2, we start with Walcher's point as described in \cite{StabilityofLGbranes}. In the deformation procedure, we keep the central charges $Z(M_1^1)$ and $Z(M_2^1)$ unchanged, move $Z(M_1^2)$ along the dotted path to $Z(M_1^3)$ (central charge of other indecomposable objects change correspondingly), and imagine that there is a pillar standing at the origin such that the homogeneous morphisms could go around the pillar but could not pass through the pillar along the deformation.
	
		\begin{figure}[ht]\label{Figure 3}
		\centering 
		\includegraphics[scale=1]{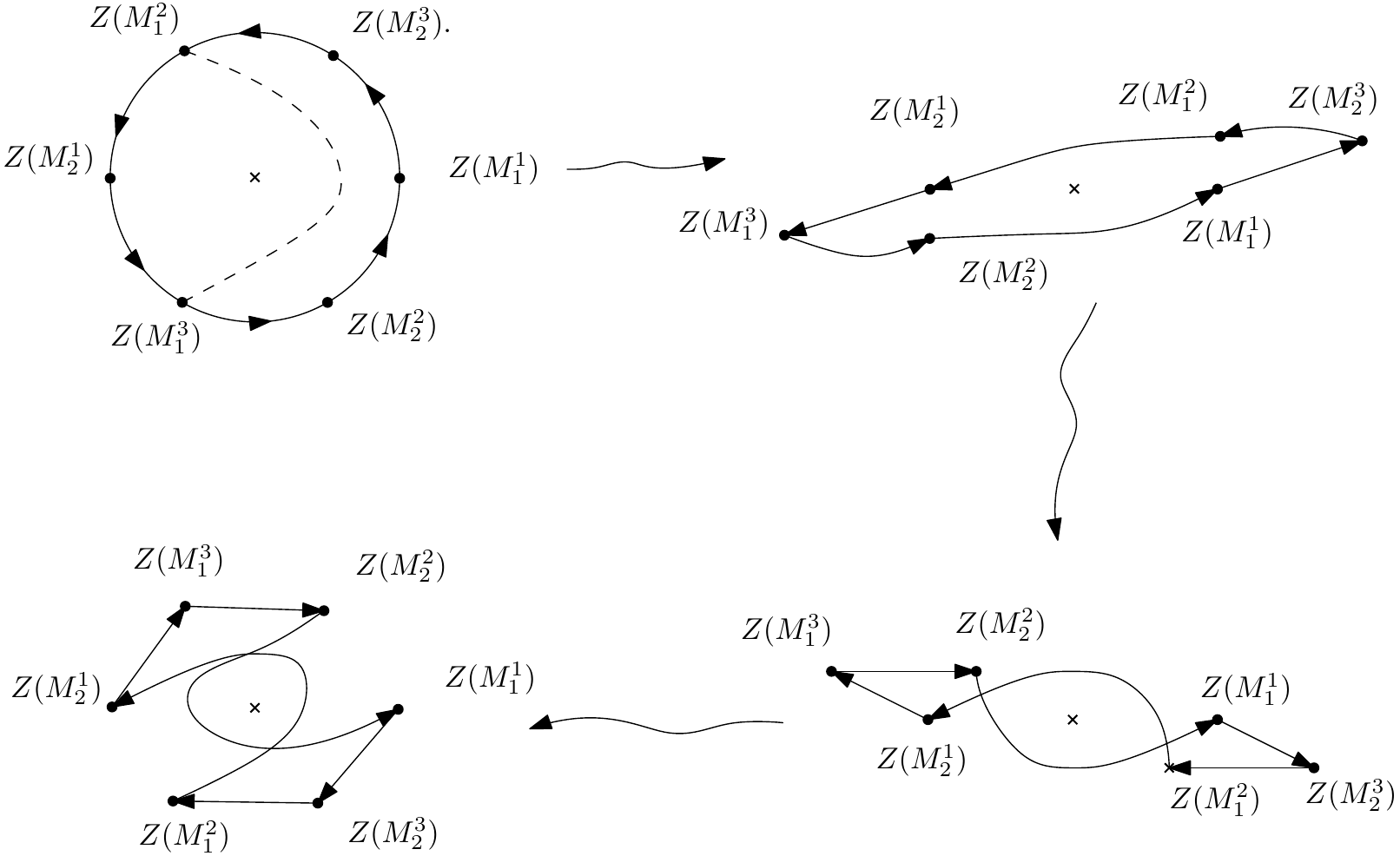}
		\caption{}
	\end{figure}
	
	In Figure 2, we observe the phenomenon of chirality symmetry breaking. In fact, the charge triples on the left hand side and right hand side are mirror symmetric to each other. But the stability conditions ar not symmetric, as we can see in Figure 1, our definition of stability conditions has an implicate orientation, which breaks the symmetry of charge triples.

	Also note that, the charge triples on the left hand side are liftable, while the charge triples on the right hand side are not.

Given a stability condition $\sigma$ on $\mathcal{C}$, if the central charge of any nontrivial indecomposable objects is nonzero, we will call $\sigma$ a strong stability condition, and use $Stab_{cyc}^s(\mathcal{C}), Hom^s(\Lambda,\mathbb{C})$ to denote the sets of strong stability condition on $\mathcal{C}$ and its associated central charges. The Figure 3 illustrates the nontrivial monodromy of the map $$Stab_{cyc}^s(\mathcal{C}) \rightarrow Hom^s(\Lambda,\mathbb{C}) \ (\subset \mathbb{C}^{rank(\Lambda)}).$$   
Indeed, if we compare Figure 3 with the left hand side of Figure 2, we see that different paths result in different stability conditions. It is easy to see that $$H_1H_2H_3=id$$ in $Stab^s_{cyc}(\mathcal{C})$, where $H_1,H_2,H_3$ are the monodromies corresponding to 3 generators of $$\pi_1((\mathbb{C}^*)^2 \backslash\{z_1+z_2=0\}).$$Moreover, if we go to the lifted space, we get $H_1H_2H_3=[2]$.

\end{example}

\bibliographystyle{alpha}
\bibliography{bibfile}

\begin{thebibliography}{BLMS17}

\bibitem[BBD82]{beilinson1982faisceaux}
AA~Beilinson, J~Bernstein, and P~Deligne.
\newblock Faisceaux pervers, analyse et topologie sur les espaces singuliers
  (i) cirm, 6--10 juillet 1981.
\newblock {\em Ast{\'e}risque}, 100, 1982.

\bibitem[BGS87]{CMmodulesoverhypersurfaceII}
R.-O. Buchweitz, G.-M. Greuel, and F.-O. Schreyer.
\newblock Cohen-{M}acaulay modules on hypersurface singularities. {II}.
\newblock {\em Invent. Math.}, 88(1):165--182, 1987.

\bibitem[BLMS17]{bayer2017stability}
Arend Bayer, Mart{\'\i} Lahoz, Emanuele Macr{\`\i}, and Paolo Stellari.
\newblock Stability conditions on {K}uznetsov components.
\newblock {\em arXiv preprint arXiv:1703.10839}, 2017.

\bibitem[Bri07]{bridgeland2007stability}
Tom Bridgeland.
\newblock Stability conditions on triangulated categories.
\newblock {\em Ann. of Math. (2)}, 166(2):317--345, 2007.

\bibitem[Con85]{Noncommutativedifferentialgeometry}
Alain Connes.
\newblock Noncommutative differential geometry.
\newblock {\em Inst. Hautes \'{E}tudes Sci. Publ. Math.}, (62):257--360, 1985.

\bibitem[CQ97]{Excisioninbivariantperiodiccycliccohomolgy}
Joachim Cuntz and Daniel Quillen.
\newblock Excision in bivariant periodic cyclic cohomology.
\newblock {\em Invent. Math.}, 127(1):67--98, 1997.

\bibitem[DM12]{KLformularevisit}
Tobias Dyckerhoff and Daniel Murfet.
\newblock The {K}apustin-{L}i formula revisited.
\newblock {\em Adv. Math.}, 231(3-4):1858--1885, 2012.

\bibitem[Dou02]{douglas2002dirichlet}
Michael~R. Douglas.
\newblock Dirichlet branes, homological mirror symmetry, and stability.
\newblock In {\em Proceedings of the {I}nternational {C}ongress of
  {M}athematicians, {V}ol. {III} ({B}eijing, 2002)}, pages 395--408. Higher Ed.
  Press, Beijing, 2002.

\bibitem[Dyc11]{Compactgeneratorsformatrixfactorizations}
Tobias Dyckerhoff.
\newblock Compact generators in categories of matrix factorizations.
\newblock {\em Duke Math. J.}, 159(2):223--274, 2011.

\bibitem[Eis80]{Homologicalalgebraoveracompleteintersection}
David Eisenbud.
\newblock Homological algebra on a complete intersection, with an application
  to group representations.
\newblock {\em Trans. Amer. Math. Soc.}, 260(1):35--64, 1980.

\bibitem[FJR13]{FJRWtheory}
Huijun Fan, Tyler Jarvis, and Yongbin Ruan.
\newblock The {W}itten equation, mirror symmetry, and quantum singularity
  theory.
\newblock {\em Ann. of Math. (2)}, 178(1):1--106, 2013.

\bibitem[Goo85]{Cyclichomologyderivationsandthefreeloopspace}
Thomas~G. Goodwillie.
\newblock Cyclic homology, derivations, and the free loopspace.
\newblock {\em Topology}, 24(2):187--215, 1985.

\bibitem[HT07]{hotta2007d}
Ryoshi Hotta and Toshiyuki Tanisaki.
\newblock {\em D-modules, perverse sheaves, and representation theory}, volume
  236.
\newblock Springer Science \& Business Media, 2007.

\bibitem[Kel05]{Ontriangulatedorbitcategories}
Bernhard Keller.
\newblock On triangulated orbit categories.
\newblock {\em Doc. Math.}, 10:551--581, 2005.

\bibitem[KL03a]{DbranesinLGmodelsandalgebraicgeometry}
Anton Kapustin and Yi~Li.
\newblock D-branes in {L}andau-{G}inzburg models and algebraic geometry.
\newblock {\em J. High Energy Phys.}, (12):005, 44, 2003.

\bibitem[KL03b]{TopologicalcorrelatorsinLGmodels}
Anton Kapustin and Yi~Li.
\newblock Topological correlators in {L}andau-{G}inzburg models with
  boundaries.
\newblock {\em Adv. Theor. Math. Phys.}, 7(4):727--749, 2003.

\bibitem[Kn87]{CMmoduleoverhypersurfaceI}
Horst Kn\"{o}rrer.
\newblock Cohen-{M}acaulay modules on hypersurface singularities. {I}.
\newblock {\em Invent. Math.}, 88(1):153--164, 1987.

\bibitem[KS08]{kontsevich2008stability}
Maxim Kontsevich and Yan Soibelman.
\newblock Stability structures, motivic {D}onaldson-{T}homas invariants and
  cluster transformations.
\newblock {\em arXiv preprint arXiv:0811.2435}, 2008.

\bibitem[KST07]{KSTmatrixfactorizations}
Hiroshige Kajiura, Kyoji Saito, and Atsushi Takahashi.
\newblock Matrix factorizations and representations of quivers. {II}: {Type}
  {{\(ADE\)}} case.
\newblock {\em Adv. Math.}, 211(1):327--362, 2007.

\bibitem[Mur13]{Residueanddualityforsingularitycategories}
Daniel Murfet.
\newblock Residues and duality for singularity categories of isolated
  {G}orenstein singularities.
\newblock {\em Compos. Math.}, 149(12):2071--2100, 2013.

\bibitem[Nee01]{Neemanstriangulatedcategories}
Amnon Neeman.
\newblock {\em Triangulated categories}, volume 148 of {\em Annals of
  Mathematics Studies}.
\newblock Princeton University Press, Princeton, NJ, 2001.

\bibitem[NS18]{Ontopologicalcyclichomology}
Thomas Nikolaus and Peter Scholze.
\newblock On topological cyclic homology.
\newblock {\em Acta Math.}, 221(2):203--409, 2018.

\bibitem[Orl04]{Orlovtriangulatedcatofsingularities}
D.~O. Orlov.
\newblock Triangulated categories of singularities and {D}-branes in
  {L}andau-{G}inzburg models.
\newblock {\em Tr. Mat. Inst. Steklova}, 246(Algebr. Geom. Metody, Svyazi i
  Prilozh.):240--262, 2004.

\bibitem[Orl09]{GradedMF}
Dmitri Orlov.
\newblock Derived categories of coherent sheaves and triangulated categories of
  singularities.
\newblock In {\em Algebra, arithmetic, and geometry: in honor of {Y}u. {I}.
  {M}anin. {V}ol. {II}}, volume 270 of {\em Progr. Math.}, pages 503--531.
  Birkh\"{a}user Boston, Boston, MA, 2009.

\bibitem[PV12]{CherncharactersandHRRformatrixfactorizations}
Alexander Polishchuk and Arkady Vaintrob.
\newblock Chern characters and {H}irzebruch-{R}iemann-{R}och formula for matrix
  factorizations.
\newblock {\em Duke Math. J.}, 161(10):1863--1926, 2012.

\bibitem[QZ22]{Geometricclassificationoftotalstabilityspaces}
Yu~Qiu and Xiaoting Zhang.
\newblock Geometric classification of total stability spaces.
\newblock {\em arXiv preprint arXiv:2202.00092}, 2022.

\bibitem[Tre19]{SmiththeoryandgeometricHeckealgebras}
David Treumann.
\newblock Smith theory and geometric {H}ecke algebras.
\newblock {\em Math. Ann.}, 375(1-2):595--628, 2019.

\bibitem[Wal05]{StabilityofLGbranes}
Johannes Walcher.
\newblock Stability of {L}andau-{G}inzburg branes.
\newblock {\em J. Math. Phys.}, 46(8):082305, 29, 2005.

\end{thebibliography}

\end{document}